\newtheorem{thm}{Theorem}[section]
\newtheorem{corollary}[thm]{Corollary}
\newtheorem{assumption}{Assumption}
\newtheorem{lemma}[thm]{Lemma}
\newtheorem{prop}[thm]{Proposition}
\newtheorem{theorem}[thm]{Theorem}
\theoremstyle{definition}
\newtheorem{rem}[thm]{Remark}
\numberwithin{equation}{section}
\newtheorem*{acknowledgement}{Acknowledgement}
\begin{document}

\begin{frontmatter}

\title{Coupling in the Heisenberg group and its applications to gradient estimates}
\runtitle{Heisenberg coupling}

\author{\fnms{Sayan} \snm{Banerjee}\ead[label=Sayan]{sayan@email.unc.edu}\thanksref{t1}},
\author{\fnms{Maria} \snm{Gordina} \corref{Masha} \ead[label=Masha]{maria.gordina@uconn.edu} \thanksref{t2}}
\and
\author{\fnms{Phanuel} \snm{Mariano} \ead[label=Phanuel]{phanuel.mariano@uconn.edu} \thanksref{t3}}

\thankstext{t1}{Research was supported in part by EPSRC Research Grant  EP/K013939}
\thankstext{t2}{Research was supported in part by the Simons Fellowship and NSF Grant DMS-1007496}
\thankstext{t3}{Research was supported in part by NSF Grant DMS-1007496}

\address{
Department of Statistics and Operations Research\\
University of North Carolina\\
Chapel Hill, NC 27599, U.S.A.\\
\printead{Sayan}}

\address{
Department of Mathematics\\
University of Connecticut\\
Storrs, CT 06269,  U.S.A.\\
\printead{Masha}}

\address{
Department of Mathematics\\
University of Connecticut\\
Storrs, CT 06269,  U.S.A.\\
\printead{Phanuel}}

\runauthor{Banerjee, Gordina, Mariano}

\begin{abstract}
We construct a non-Markovian coupling for hypoelliptic diffusions which are Brownian motions in the three-dimensional Heisenberg group. We then derive properties of this coupling such as  estimates on the coupling rate, and upper and lower bounds on the total variation distance between the laws of the Brownian motions. Finally we use these properties to prove gradient estimates for harmonic functions for the hypoelliptic Laplacian which is the generator of Brownian motion in the Heisenberg group.

\end{abstract}

\begin{keyword}[class=MSC]
\kwd[Primary 60D05]{}
\kwd{}
\kwd[secondary 60H30]{}
\end{keyword}

\begin{keyword}
\kwd{Coupling}
\kwd{Karhunen-Loeve expansion}
\kwd{non-Markovian coupling}
\kwd{Heisenberg group}
\kwd{total variation distance}
\kwd{gradient estimate}
\kwd{sub-Riemannian manifold}
\kwd{Brownian motion}
\end{keyword}

\end{frontmatter}

\section{Introduction}\label{s.1}

Recall that a coupling of two probability measures $\mu_1$ and $\mu_2$, defined on respective measure spaces $(\Omega_1, \mathcal{A}_1)$ and $(\Omega_2, \mathcal{A}_2)$, is a measure $\mu$ on the product space $(\Omega_1 \times \Omega_2, \mathcal{A}_1 \times \mathcal{A}_2)$ with marginals $\mu_1$ and $\mu_2$. In this article, we will be interested in coupling of the laws of two Markov processes $(X_t: t \geqslant 0)$ and $(Y_t: t\geqslant 0)$ in a geometric setting of a sub-Riemannian manifold such as the Heisenberg group $\mathbb{H}^{3}$. Namely, we discuss couplings of two Markov processes having the same generator but starting from different points joining together (coupling) at some random time, and how these can be used to obtain total variation bounds and prove gradient estimates for harmonic functions on $\mathbb{H}^{3}$.
Couplings have been an extremely useful tool in probability theory and has resulted in establishing deep connections between probability, analysis and geometry.

We start by providing some background on couplings and then on gradient estimates in our setting. The coupling is said to be \textit{successful} if the two processes couple within finite time almost surely, that is, the \textit{coupling time} for $X_{t}$ and $Y_{t}$ defined as

\[
\tau(X,Y)=\inf\{t \geqslant 0: X_s = Y_s \text{ for all } s \geqslant t\}.
\]
is almost surely finite.

A major application of couplings arises in estimating the \textit{total variation distance} between the laws of two Markov processes at time $t$ which in general is very hard to compute explicitly. Such an estimate can be obtained from the \textit{Aldous' inequality}

\begin{equation}\label{equation:Aldous}
\mu\left\{\tau(X,Y) >t\right\} \geqslant ||\mathcal{L}(X_t)-\mathcal{L}(Y_t)||_{TV},
\end{equation}
where $\mu$ is the coupling of the Markov processes $X$ and $Y$, $\mathcal{L}(X_t)$ and $\mathcal{L}(Y_t)$ denote the laws (distributions) of $X_t$ and $Y_t$ respectively, and
\[
||\nu||_{TV} = \sup\{|\nu(A)|: A \text{ measurable}\}
\]
denotes the total variation norm of the measure $\nu$.

This, in turn, can be used to provide sharp rates of convergence of Markov processes to their respective stationary distributions, when they exist (see \cite{LevinPeresWilmerBook} for some such applications in studying mixing times of Markov chains).

This raises a natural question: how can we couple two Markov processes so that the probability of failing to couple by time $t$ (coupling rate) is minimized (in an appropriate sense) for some, preferably all, $t$? Griffeath \cite{Griffeath1975} was the first to prove that \textit{maximal couplings}, that is, the couplings for which the Aldous' inequality becomes an equality for each $t$ in the time set of the Markov process, exist for discrete time Markov chains. This was later greatly simplified by Pitman \cite{Pitman1976a} and generalized to non-Markovian processes by Goldstein \cite{Goldstein1979} and continuous time c\`{a}dl\`{a}g processes by Sverchkov and Smirnov \cite{SverchkovSmirnov1990}.

These constructions, though extremely elegant, have a major drawback: they are typically very implicit. Thus, it is very hard, if not impossible, to perform detailed calculations and obtain precise estimates using these couplings. Part of the implicitness comes from the fact that these couplings are non-Markovian.

A Markovian coupling of two Markov processes $X$ and $Y$ is a coupling where, for any $t \geqslant 0$, the joint process $\{(X_s,Y_s): s \geqslant t\}$ conditioned on the filtration $\sigma\{(X_s, Y_s): s \leqslant t\}$ is again a coupling of the laws of $X$ and $Y$, but now starting from $(X_t,Y_t)$. These are the most widely used couplings in deriving estimates and performing detailed calculations as their constructions are typically explicit. However, these couplings usually do not attain the optimal rates. In fact, it has been shown in \cite{BanerjeeKendall2017a} that the existence of a maximal coupling that is also Markovian imposes enormous constraints on the generator of the Markov process and its state space. Further, \cite{BanerjeeKendall2016a} describes an example using Kolmogorov diffusions defined as a two dimensional diffusion given by a standard Brownian motion along with its running time integral, where for \textit{any Markovian coupling}, the probability of failing to couple by time $t$ does not even attain the same order of decay (with $t$) as the total variation distance. More precisely, they showed that if the driving Brownian motions start from the same point, then the total variation distance between the corresponding Kolmogorov diffusions decays like $t^{-3/2}$ whereas for any Markovian coupling, the coupling rate is at best of order $t^{-1/2}$.

This brings us to the main subject of this article: when can we produce non-Markovian couplings that are explicit enough to give us good bounds on the total variation distance between the laws of $X_t$ and $Y_t$ when Markovian couplings fail to do so? And what information can such couplings provide about the geometry of the state space of these Markov processes? In this article, we look at the Heisenberg group which is the simplest example of a sub-Riemannian manifold and Brownian motion on it. The latter is the Markov process whose generator is the sub-Laplacian on the Heisenberg group as described in Section \ref{section:prelim}. We construct an explicit successful non-Markovian coupling of two copies of this process starting from different points in $\mathbb{H}^{3}$ and use it to derive sharp bounds on the total variation distance between their laws at time $t$. We also use this coupling to produce gradient estimates for harmonic functions on the Heisenberg group (more details below), thus providing a non-trivial link between probability and geometric analysis in the sub-Riemannian setting.

We note here that successful \textit{Markovian couplings} of Brownian motions on the Heisenberg group have been constructed in \cite{Kendall2007a} and rates of these couplings have been studied in \cite{Kendall2010a}. However, the rates for the coupling we construct are much better. In fact, we show in Remark \ref{rem:Mark} that it is impossible to derive the rates we get from Markovian couplings. Moreover, the coupling we consider is efficient, that is, the coupling rate and the total variation distance decay like the same power of $t$ as pointed out in Remark \ref{rem:efficiency}.

Now we would like to describe gradient estimates in geometric settings and how couplings have been used to prove them previously. Let us start with a classical gradient estimate for harmonic functions in $\mathbb{R}^{d}$. Suppose $u$ is a real-valued function  $u$ on $\mathbb{R}^{d}$ which is harmonic in a ball $B_{2\delta}(x_0)$, then there exists a positive constant $C_d$ (which depends only on the dimension $d$ and not on $u$) such that
\[
\sup_{x\in B_{\delta}(x_{0})}\left|\nabla u(x)\right|\leqslant \frac{C_d}{\delta}\sup_{x\in B_{2\delta}(x_{0})}\left|u(x)\right|.
\]
In 1975, Cheng and Yau (see \cite{ChengYau1975a, Yau1975a, SchoenYau}) generalized the classical gradient estimate to complete Riemannian manifolds $M$ of dimension $d\geqslant 2$ with Ricci curvature bounded below by $-(d-1)K$ for some $K\geqslant 0$. They proved that any
positive harmonic function on a Riemannian ball $B_{\delta}(x_0)$ satisfies
\[
\sup_{x \in B_{\delta/2}(x_0)}\frac{\left|\nabla u(x)\right|}{u(x)}\leqslant  C_{d}\left(\frac{1}{\delta}+\sqrt{K}\right).
\]
Moreover, in addition to such estimates, there is a vast literature on functional inequalities such as heat kernel gradient estimates, Poincar\'e inequalities, heat kernel estimates, elliptic and parabolic Harnack inequalities etc on Riemannian manifolds or more generally on measure metric spaces. Quite often these results require assumptions such as volume doubling and curvature bounds.

In 1991, M.~Cranston in \cite{Cranston1991} used the method of coupling two diffusion processes to obtain a similar gradient estimate for solutions to the equation

\begin{equation}\label{CranstonEQ}
\frac{1}{2}\Delta u+Zu=0
\end{equation}
on a Riemannian manifold $\left(M,g\right)$ whose Ricci curvature is bounded below and $Z$ is a bounded vector field. This coupling is known as the Kendall-Cranston coupling  as it was based on the techniques in \cite{Kendall1989a}. In particular, M.~Cranston proved the following gradient estimate.

\begin{theorem}[Cranston] Suppose $\left(M,g\right)$ is a complete d-dimensional
Riemannian manifold with distance $\rho_{M}$ and assume $\mbox{Ric}_{M}\geqslant -Kg.$
Let $Z$ be a $C^{1}$ vector field on $M$ such that $\left|Z(x)\right|\leqslant m$
for all $x\in M$. There is a constant $c=c\left(K, d, m\right)$ such
that whenever $\delta>0$ and \eqref{CranstonEQ} is satisfied in some Riemannian ball $B_{2\delta}\left(x_{0}\right)$,
we have
\[
\left|\nabla u(x)\right|\leqslant c\left(\frac{1}{\delta}+1\right)\sup_{x \in B\left(x_{0},3\delta/2\right)}|u(x)|,\,\,\,\,\, x\in B\left(x_{0},\delta\right).
\]
If \eqref{CranstonEQ} is satisfied on $M$ and $u$ is bounded and positive, then
\[
\left|\nabla u(x)\right|\leq2\left(\sqrt{K\left(d-1\right)}+m\right)\left\Vert u\right\Vert _{\infty}.
\]

\end{theorem}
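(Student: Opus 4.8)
The plan is to follow Cranston's coupling proof, which combines three ingredients: a Markovian coupling of two copies of the diffusion generated by $\tfrac12\Delta+Z$, the fact that \eqref{CranstonEQ} turns $u$ into a martingale along such a diffusion, and a Lyapunov estimate for the distance between the coupled particles. Fix $x$ and a nearby point $y$. First I would invoke the Kendall--Cranston coupling $(X_t,Y_t)_{t\ge 0}$ with $X_0=x$, $Y_0=y$: off the diagonal it is the mirror (reflection) coupling along the minimal geodesic from $X_t$ to $Y_t$, and near the cut locus one uses Kendall's device from \cite{Kendall1989a} (see also \cite{Kendall2007a}) to make it globally well defined. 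The property I need is that, up to the first meeting time $T:=\inf\{t\ge0:X_t=Y_t\}$, the radial process $\rho_t:=\rho_M(X_t,Y_t)$ is a continuous one-dimensional semimartingale with $d\langle\rho\rangle_t=4\,dt$ and
\[
d\rho_t \le 2\,dB_t + \big(b(\rho_t)+2m\big)\,dt ,
\]
where $B$ is a standard one-dimensional Brownian motion, the $2m$ bounds $|\langle\nabla_X\rho,Z(X_t)\rangle|+|\langle\nabla_Y\rho,Z(Y_t)\rangle|\le 2|Z|$, and $b(r)\ge0$ is the curvature defect produced by the Rauch/Laplacian comparison theorem under $\mathrm{Ric}_M\ge -Kg$. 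The key fact about $b$ is that the flat $(d-1)/\rho$ contributions of the two endpoints cancel (exactly as in the $\mathbb{R}^n$ mirror-coupling computation), leaving $b$ bounded: writing $s:=\sqrt{K/(d-1)}\,r$, one has $b(r)=2\sqrt{(d-1)K}\,\big(\coth s - s^{-1}\big)$ with $\coth s - s^{-1}\in[0,1)$, so that $b(r)+2m<2\lambda$ for all $r>0$, where $\lambda:=\sqrt{(d-1)K}+m$.

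Second, since $(\tfrac12\Delta+Z)u=0$, It\^o's formula shows that $u(X_{t\wedge\tau_X})$ and $u(Y_{t\wedge\tau_Y})$ are bounded martingales; in the local statement $\tau_X$ (resp.\ $\tau_Y$) is the exit time of $X$ (resp.\ $Y$) from $B(x_0,3\delta/2)$, on whose closure $u$ is smooth because it is harmonic on $B(x_0,2\delta)$, and in the global statement $\tau_X=\tau_Y=+\infty$ and we use boundedness of $u$ directly. Applying optional stopping at $\sigma:=T\wedge\tau_X\wedge\tau_Y$ to both martingales gives $u(x)-u(y)=\mathbb E[u(X_\sigma)-u(Y_\sigma)]$. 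On $\{T\le\tau_X\wedge\tau_Y\}$ the integrand vanishes (the particles have met), and elsewhere it is bounded by $2\sup_{\overline{B(x_0,3\delta/2)}}|u|$ (resp.\ $2\|u\|_\infty$), so
\[
|u(x)-u(y)| \le 2\Big(\sup_{\overline{B(x_0,3\delta/2)}}|u|\Big)\,\mathbb P\big(T>\tau_X\wedge\tau_Y\big)
\]
in the local case, and $|u(x)-u(y)|\le 2\|u\|_\infty\,\mathbb P(T=\infty)$ in the global case (for the latter one passes to the limit $t\to\infty$ in $u(x)=\mathbb E[u(X_t)]$ using bounded-martingale convergence, and uses that the limits of $u(X_t)$ and $u(Y_t)$ coincide on $\{T<\infty\}$).

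Third, I estimate the non-coupling probability via the function $f(r):=1-e^{-\lambda r}$. By It\^o's formula and the bound on $d\rho_t$, the drift of $f(\rho_t)$ equals $\lambda e^{-\lambda\rho_t}\big(b(\rho_t)+2m-2\lambda\big)<0$, so $f(\rho_{t\wedge\sigma})$ is a bounded supermartingale. In the global case $f(\rho_{t\wedge T})$ converges a.s.; since the Brownian part of $\rho$ prevents $\rho_t$ from converging to a finite value on $\{T=\infty\}$, the limit is $1$ there and $0$ on $\{T<\infty\}$, whence $\mathbb P(T=\infty)\le f(\rho_0)=1-e^{-\lambda\rho_M(x,y)}\le\lambda\,\rho_M(x,y)$; dividing by $\rho_M(x,y)$ and letting $y\to x$ gives $|\nabla u(x)|\le 2\lambda\|u\|_\infty=2\big(\sqrt{(d-1)K}+m\big)\|u\|_\infty$, the second assertion. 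In the local case one instead stops $f(\rho_{t\wedge\sigma})$ at the first time $\rho$ reaches a macroscopic level $\ell$ of order $\delta$ and uses the supermartingale/maximal inequality to get $\mathbb P(\rho\text{ hits }\ell\text{ before }0)\le f(\rho_M(x,y))/f(\ell)\le\lambda\rho_M(x,y)/(1-e^{-\lambda\ell})\le C(K,d,m)\,\rho_M(x,y)\,(\delta^{-1}+1)$; it then remains to show that $\mathbb P(T>\tau_X\wedge\tau_Y)$ is controlled by this quantity, and the first assertion follows after dividing by $\rho_M(x,y)$ and letting $y\to x$.

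The main obstacle is twofold: constructing the mirror coupling globally through the cut locus (Kendall's technical device), and the last reduction in the local case. The inclusion $\{T>\tau_X\wedge\tau_Y\}\subseteq\{\rho\text{ reaches }\ell\}$ is \emph{not} a formal consequence of the triangle inequality, because two coupled particles can drift out of the ball while remaining close to each other; to handle this one must exploit the concrete structure of the coupling --- notably the non-degeneracy $d\langle\rho\rangle_t=4\,dt$ together with the fact that particles started at distance $\rho_M(x,y)$ meet in time of order $\rho_M(x,y)^2$ whereas escaping $B(x_0,3\delta/2)$ takes time of order $\delta^2$ --- to show that the escape-before-meeting event contributes at the right order in $\rho_M(x,y)$. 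Making this quantitative, with the correct $\delta$-dependence locally and the sharp constant $2(\sqrt{(d-1)K}+m)$ globally, is the heart of Cranston's argument.
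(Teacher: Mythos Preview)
The paper does not contain a proof of this theorem. It is stated in the Introduction as background from Cranston's 1991 paper \cite{Cranston1991}, and immediately after the statement the authors simply summarize the method: ``Cranston's approach generalized the coupling of Brownian motions on manifolds of Kendall \cite{Kendall1986} to couple processes with the generator $L=\frac{1}{2}\Delta +Z$. The methods in that paper required tools from Riemannian geometry such as the Laplacian comparison theorem and the index theorem to obtain estimates on the processes $\rho_{M}(X_t,Y_t)$ and $\rho_{M}(X_t,X_0)$.'' The paper's own contributions concern the Heisenberg group, not Riemannian manifolds, so there is no in-paper proof to compare your proposal against.

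That said, your sketch is a faithful outline of Cranston's original argument: the Kendall--Cranston mirror coupling, the semimartingale decomposition of $\rho_t$ with drift bounded via the index/Laplacian comparison theorem under the Ricci lower bound, the martingale property of $u$ along the diffusion, and the Lyapunov function $f(r)=1-e^{-\lambda r}$ to control the non-coupling probability. You have also correctly identified the two genuine technical points---handling the cut locus and, in the local statement, controlling escape from the ball before coupling (which is why Cranston tracks $\rho_M(X_t,X_0)$ as well as $\rho_M(X_t,Y_t)$). Those are precisely the places where the work lies in \cite{Cranston1991}, and your proposal stops short of carrying them out, but as a plan it matches the cited source.
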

Cranston's approach generalized the coupling of Brownian motions on manifolds of Kendall \cite{Kendall1986} to couple processes with the generator $L=\frac{1}{2}\Delta +Z$. The methods in that paper required tools from Riemannian geometry such as the Laplacian comparison theorem and the index theorem to obtain estimates on the processes $\rho_{M}\left(X_t,Y_t\right)$ and $\rho_{M}\left(X_t,X_0\right)$ where $\rho_M$ is the Riemannian distance. M.~Cranston also proved similar results on $\mathbb{R}^d$ in \cite{Cranston1992a}.

In this paper we consider the simplest sub-Riemannian manifold, the Heisenberg group $\mathbb{H}^{3}$ as a starting point of using couplings for proving gradient estimates in such a setting. As the generator of $\mathbb{H}^{3}$-valued Brownian motion is a hypoelliptic operator, functional inequalities for the corresponding harmonic functions or hypo-elliptic heat kernels are much more challenging to prove. There was recent progress in using generalized curvature-dimension inequalities for such results (e.g. \cite{BakryBaudoinBonnefontChafai2008, BaudoinGarofalo2017, BaudoinBonnefontGarofalo2014}, as well as results in the spirit of optimal transport (e.g. \cite{Kuwada2010a}). The main point of the current paper is not whether a coupling can be constructed, as these have been known since \cite{BenArousCranstonKendall1995}, but rather finding a (necessarily non-Markovian) coupling that gives sharp total variation bounds and explicit gradient estimates. The properties of the coupling we construct in the current paper are crucial in this, and it is interesting to contrast this with optimality (or the lack of it) for the Kendall-Cranston coupling in the Riemannian manifolds as described in \cite{KuwadaSturm2007, Kuwada2007}.

The paper is organized as follows. Section \ref{section:prelim} gives basics on sub-Riemannian manifolds and the Heisenberg group $\mathbb{H}^{3}$ including Brownian motion on $\mathbb{H}^{3}$. In Section \ref{section:3} we construct the non-Markovian coupling of Brownian motions in $\mathbb{H}^{3}$, and describe its properties. Finally, in Section \ref{section:Gradient estimates} we prove the gradient estimates for harmonic functions for the hypoelliptic Laplacian which is the generator of Brownian motion in the Heisenberg group.

\section{Preliminaries}\label{section:prelim}

\subsection{Sub-Riemannian basics}  A sub-Riemannian manifold $M$ can be thought of as a Riemannian manifold where we have a constrained movement. Namely, such a manifold has the structure $\left(M,\mathcal{H}, \langle \cdot, \cdot \rangle \right)$, where allowed directions are only the ones in the horizontal distribution, which is a suitable subbundle $\mathcal{H}$ of the tangent bundle $TM$. For more detail on sub-Riemannian manifolds we refer to \cite{MontgomeryBook2002}.

Namely, for a smooth connected $d$-dimensional manifold $M$ with the tangent bundle $TM$, let  $\mathcal{H} \subset TM$ be an $m$-dimensional smooth sub-bundle such that the sections of $\mathcal{H}$ satisfy  \emph{H\"{o}rmander's condition} (the \emph{bracket generating condition}) formulated in Assumption \ref{HCassump}. We assume that on each fiber of $\mathcal{H}$ there is an inner product $\langle \cdot, \cdot \rangle$ which varies smoothly between fibers.  In this case, the triple $(M, \mathcal{H}, \langle \cdot, \cdot \rangle)$ is called a \emph{sub-Riemannian manifold} of rank $m$, $\mathcal{H}$ is called the \emph{horizontal distribution}, and $\langle \cdot, \cdot \rangle$ is called the \emph{sub-Riemannian metric}. The vectors (resp.~vector fields)  $X \in \mathcal{H}$ are called \emph{horizontal vectors} (resp.~horizontal vector fields), and curves $\gamma$ in $M$ whose tangent vectors are horizontal, are called \emph{horizontal curves}.

\begin{assumption}(H\"{o}rmander's condition)\label{HCassump}
We will say  that $\mathcal{H}$ satisfies \emph{H\"{o}rmander's (bracket generating) condition} if horizontal vector fields with their Lie brackets span the tangent space $T_{p}M$ at every point $p \in M$.
\end{assumption}
H\"{o}rmander's condition guarantees analytic and topological properties such as hypoellipticity of the corresponding sub-Laplacian and topological properties of the sub-Riemannian manifold $M$. We explain briefly both aspects below. First we define the \emph{Carnot-Carath\'{e}odory metric} $d_{CC}$ on $M$ by
\begin{align}\label{e.3.1}
& d_{CC}(x,y) =
\\
& \inf\left\{  \int_0^1 \left\Vert \gamma^{\prime}(t)\right\Vert _{\mathcal{H}} \, dt~  \text{ where } \gamma(0) = x, \gamma(1) = y, \gamma \text{ is a horizontal curve}\right\}, \notag
\end{align}
where as usual $\inf(\emptyset):= \infty$. Here the norm is induced by the inner product on $\mathcal{H}$, namely, $\left\Vert v\right\Vert _{\mathcal{H}} :=\left(\langle v, v \rangle_p \right)^{\frac{1}{2}}$ for $v \in \mathcal{H}_{p}, \ p \in M$. The Chow-Rashevski theorem says that H\"{o}rmander's condition is sufficient to ensure that any two points in $M$ can be connected by a finite length horizontal curve. Moreover, the topology generated by the the Carnot-Carath\'{e}odory metric coincides with the original topology of the manifold $M$.

As we are interested in a Brownian motion on a sub-Riemannian manifold $(M, \mathcal{H}, \langle \cdot, \cdot \rangle)$, a natural question is what its generator is. While there is no canonical operator such as the Laplace-Beltrami operator on a Riemannian manifold, there is a notion of a \emph{sub-Laplacian} on sub-Riemannian manifolds. A second order differential operator defined on $C^{\infty}\left( M \right)$ is called a \emph{sub-Laplacian} $\Delta_{\mathcal{H}}$ if for every $p \in M$ there is a neighborhood $U$ of $p$ and a collection of smooth vector fields $\{X_0, X_1, ..., X_m\}$ defined on $U$  such that $\{X_1, ..., X_m\}$ are orthonormal with respect to the sub-Riemannian metric and
\[
\Delta_{\mathcal{H}} = \sum\limits_{k=1}^m X_k^2 + X_0.
\]
By the classical theorem of L.~H\"{o}rmander in \cite[Theorem 1.1]{Hormander1967a} H\"{o}rmander's condition (Assumption \ref{HCassump}) guarantees that any sub-Laplacian is hypoelliptic. For more properties of sub-Laplacians which are generators of a Brownian motion on a sub-Riemannian manifold  we refer to \cite{GordinaLaetsch2016a}.

Finally, the \emph{horizontal gradient} $\nabla_{\mathcal{H}}$ is a horizontal vector field such that for any smooth $f:M\to\mathbb{R}$ we have that for all $X\in\mathcal{H}$,
\[
\langle\nabla_{\mathcal{H}}f, X\rangle=X\left(f\right).
\]
We define the length of the gradient as in \cite{Kuwada2010a}. For a function $f$ on $M$, let
\begin{equation}\label{Grad}
\left|\nabla_{\mathcal{H}}f\right|\left(x\right):= \lim_{r\downarrow0}\sup_{0<d_{CC}\left(x,\tilde{x}\right)\leqslant  r}\left|\frac{f\left(x\right)-f\left(\tilde{x}\right)}{d_{CC}\left(x,\tilde{x}\right)}\right|,
\end{equation}
and set $\left\Vert \nabla_{\mathcal{H}}f\right\Vert _{\infty}:=\sup_{x\in\mathbb{H}^{3}}\left|\nabla_{\mathcal{H}}f\right|\left(x\right)$.

\subsection{The Heisenberg group}

The Heisenberg group $\mathbb{H}^3$ is the simplest non-trivial example of a sub-Riemannian manifold. Namely, let $\mathbb{H}^3\cong\mathbb{R}^3$ with the multiplication defined by
\[
\left(x_{1}, y_{1},z_{1}\right)\star\left(x_{2},y_{2},z_{2}\right):=\left(x_{1}+x_{2}, y_{1}+y_{2}, z_{1}+z_{2}+\left(x_{1}y_{2}-x_{2}y_{1}\right)\right),
\]
with the group identity $e=\left( 0, 0, 0 \right)$ and the inverse given by $\left(x, y,z\right)^{-1}=\left( -x, -y, -z\right)$.

We define $\mathcal{X}$, $\mathcal{Y}$, and $\mathcal{Z}$ as the unique left-invariant vector fields with $\mathcal{X}_e = \partial_x$, $\mathcal{Y}_e = \partial_y$, and $\mathcal{Z}_e = \partial_z$, so that
\begin{align*}
&\mathcal{X} = \partial_x - y \partial_z,\\
&\mathcal{Y} = \partial_y +  x \partial_z,\\
&\mathcal{Z} = \partial_z.
 \label{eqn.XYheis}
\end{align*}
The horizontal distribution is defined by $\mathcal{H} = \operatorname{span}\{\mathcal{X},\mathcal{Y}\}$ fiberwise. Observe that $[\mathcal{X},\mathcal{Y}] = 2\mathcal{Z}$, so H\"{o}rmander's condition is easily satisfied. Moreover, as any iterated Lie bracket of length greater than two vanishes, $\mathbb{H}^3$ is a nilpotent group of step $2$. The Lebesgue measure on $\mathbb{R}^3$ is a Haar measure on $\mathbb{H}^3$.
We endow $\mathbb{H}^3$ with the sub-Riemannian metric $\langle \cdot, \cdot \rangle$ so that $\{\mathcal{X},\mathcal{Y}\}$ is an orthonormal frame for the horizontal distribution. As pointed out in \cite[Example 6.1]{GordinaLaetsch2016a}, the (sum of squares) operator
\begin{equation}\label{e.SubLaplacian}
\Delta_{\mathcal{H}}=\mathcal{X}^{2}+\mathcal{Y}^{2}
\end{equation}
is a  natural sub-Laplacian for the Heisenberg group with this sub-Riemannian structure.

In general it is very cumbersome to compute the Carnot-Carath\'{e}odory distance $d_{CC}$ explicitly. In the case of the Heisenberg group an explicit formula for the distance is known. Let $r\left(\mathbf{x}\right)=d_{CC}\left(\mathbf{x}, e\right)$
be the distance between $\mathbf{x}=\left(x, y, z \right) \in \mathbb{H}^{3}$ and the identity $e=\left( 0, 0, 0 \right)$. In \cite{ChangTieWu2010} the distance is given by the formula

\[
r\left(\mathbf{x}\right)^2=\nu\left(\theta_{c}\right)\left( x^2+y^2+\left|z\right|\right),
\]
where $\theta_{c}$ is the unique solution of $\mu\left(\theta\right)\left(x^2+y^2\right)=\left|z\right|$ in the interval $[0,\pi)$ and $\mu(z)=\frac{z}{\sin^{2}z}-\cot z$ and where
\[
\nu(z)=\frac{z^{2}}{\sin^{2}z}\frac{1}{1+\mu(z)}=\frac{z^{2}}{z+\sin^{2}z-\sin z\cos z}, \ \nu(0)=2.
\]
Since the distance is left-invariant, we have
\[
d_{CC}\left(\mathbf{x},\tilde{\mathbf{x}}\right)=d_{CC}\left(\tilde{\mathbf{x}}^{-1}\star\mathbf{x},e\right)
\]
which gives us an explicit expression for $d_{CC}$ on the Heisenberg
group. Although $\nu$ is not continuous it was shown in \cite{CalinOvidiuChang2007} that $d_{CC}$ is continuous.

We will not use this explicit expression for $d_{CC}$. Instead, since $\nu\geqslant 0$ and bounded below and above by positive constants
in the interval $[0,\pi)$, it is clear that the Carnot-Carath\'{e}odory distance is equivalent to the pseudo-metric

\begin{equation}\label{e.2.1}
 \rho\left(\mathbf{x}, \mathbf{y}\right)=\left(\left(x-\tilde{x}\right)^{2}+\left(y-\tilde{y}\right)^{2}+\left|z-\tilde{z}+x\tilde{y}-y\tilde{x}\right|\right)^{\frac{1}{2}}.
\end{equation}

Finally, we can describe Brownian motion whose generator is $\Delta_{\mathcal{H}}/2$ explicitly as follows. Let $B_{1}, B_{2}$ be real-valued independent Brownian motions starting from $0$. Define Brownian motion on the Heisenberg group $\mathbf{X}_{t}:\left[0,\infty\right)\times\Omega\to\mathbb{H}$
to be the solution of the following Stratonovich stochastic differential equation (SDE)
\begin{eqnarray*}
d\mathbf{X}_{t} & = & \mathcal{X}\left(\mathbf{X}_{t}\right)\circ dB_{1}(t)+\mathcal{Y}\left(\mathbf{X}_{t}\right)\circ dB_{2}(t),
\\
\mathbf{X}_{0} & = & \left(b_{1},b_{2},a\right).
\end{eqnarray*}

Letting $\mathbf{X}_{t}=\left(X_{1}(t),X_{2}(t),X_{3}(t)\right)$
we see that the SDE reduces to
\[
d\mathbf{X}_{t}=\left(\begin{array}{c}
1\\
0\\
-X_2(t)
\end{array}\right)\circ dB_{1}(t)+\left(\begin{array}{c}
0\\
1\\
X_1(t)
\end{array}\right)\circ dB_{2}(t),
\]
so that one needs to solve the following system of equations
\begin{eqnarray*}
dX_{1}(t) & = & dB_{1}(t)\\
dX_{2}(t) & = & dB_{2}(t),\\
dX_{3}(t) & = & -X_{2}(t)\circ dB_{1}(t)+X_{1}(t)\circ dB_{2}(t).
\end{eqnarray*}
Since the covariation of two independent Brownian motions is zero we get that
\begin{eqnarray}\label{Heiseneq}
X_{1}(t) & = & b_{1}+B_{1}(t),\nonumber\\
X_{2}(t) & = & b_{2}+B_{2}(t),\nonumber\\
X_{3}(t) & = & a+\int_{0}^{t}(B_{1}(s) + b_{1})dB_{2}(s)-\int_{0}^{t}(B_{2}(s) + b_{2})dB_{1}(s).
\end{eqnarray}

\section{Coupling results}\label{section:3}

Let $B_{1},B_{2}$ be independent real-valued Brownian motions, starting from $b_1$ and $b_2$ respectively. We call the process
\begin{equation}\label{e.BMHeisenberg}
\mathbf{X}_{t}=\left(B_{1}(t), B_{2}(t), a+\int_{0}^{t}B_{1}(s)dB_{2}(s)-\int_{0}^{t}B_{2}(s)dB_{1}(s)\right)
\end{equation}
Brownian motion on the Heisenberg group, with driving Brownian
motion $\mathbf{B}=\left(B_{1},B_{2}\right)$, starting from $\left(b_{1}, b_{2}, a\right)$.
Let $\mathbf{X}$ and $\widetilde{\mathbf{X}}$ be coupled copies of this
process starting from $\left(b_{1}, b_{2}, a\right)$ and $\left(\widetilde{b}_{1}, \widetilde{b}_{2},\widetilde{a}\right)$
respectively.  Denote the coupling time
\[
\tau=\inf\left\{ t\geqslant 0:\mathbf{X}_{s}=\widetilde{\mathbf{X}}_{s} \text{ for all } s \geqslant  t\right\}.
\]
%For the coupling construction in this paper, it can be readily checked that the above definition agrees with the definition of coupling time given in the introduction.

We will construct a non-Markovian coupling $\left(\mathbf{X},\widetilde{\mathbf{X}}\right)$
of two Brownian motions on the Heisenberg group. This, via the Aldous' inequality, will yield an upper bound on the total variation distance between the laws of $\mathbf{X}$ and $\widetilde{\mathbf{X}}$. Before we state and prove the main theorem, we describe the tools required
in its proof.

For $T>0$, let $\left(B^{\text{br}},\widetilde{B}^{\text{br}}\right)$ be a coupling of standard Brownian bridges defined on the interval $\left[0, T\right]$. If $G^{(T)}$ is a Gaussian variable with mean zero and variance $T$ independent of $\left(B^{\text{br}}, \widetilde{B}^{\text{br}}\right)$,
a standard covariance computation shows that the assignment
\begin{eqnarray}
B(t) & = & B^{\text{br}}(t)+\frac{t}{T}G^{(T)}\nonumber \\
\widetilde{B}(t) & = & \widetilde{B}^{\text{br}}(t)+\frac{t}{T}G^{(T)}\label{eq:1}
\end{eqnarray}
gives a non-Markovian coupling of two standard Brownian motions on
$\left[0,T\right]$ satisfying $B\left(T\right)=\widetilde{B}\left(T\right)$.
This coupling is similar in spirit to the one developed in \cite{BanerjeeKendall2016a}.
The usefulness of this coupling strategy arises when we want to couple
two copies of the process $\left(\left(B(t),F\left(\left[B\right]_{t}\right)\right):t\geqslant 0\right)$, where $B$ is a Brownian motion, $\left[B\right]_{t}$ denotes the
whole Brownian path up until time $t$ (thought of as an element of
$C\left[0,t\right]$), and $F$ is a (possibly random) functional
on $C\left[0, t \right]$. We first reflection couple the Brownian motions
until they meet. Then, by dividing the future time into intervals
$\left[T_{n}, T_{n+1}\right]$ (usually of growing length) and constructing
a suitable non-Markovian coupling of the Brownian bridges on each
such interval, we can obtain a coupling of the Brownian paths by the
above recipe in such a way that the corresponding path functionals
agree at one of the deterministic times $T_{n}$. As by construction,
the coupled Brownian motions agree at the times $T_{n}$, we achieve
a successful coupling of the joint process $\left(B,F\right)$. Further,
the rate of coupling attained by this non-Markovian strategy is usually
significantly better than Markovian strategies, and is often near
optimal (see \cite{BanerjeeKendall2016a}).

We will be interested in the particular choice of the random functional, namely,
\[
F\left(\left[w\right]_{t}\right)=\int_{0}^{t}w(s)dB_{1}(s),
\]
where $B_{1}$ is a standard Brownian motion and $w\in C\left[0,t\right]$.
Our coupling strategy for the Brownian bridges on $\left[0,T\right]$ will be based on the Karhunen-Lo\`{e}ve expansion which goes back to \cite{Karhunen1947a, Loeve1948} and for examples of such expansions see \cite[p.21]{WangLPhDThesis2008}. For the Brownian bridge we have
\begin{equation}
B^{\text{br}}(t)=\sqrt{T}\sum_{k=1}^{\infty}Z_{k}\frac{\sqrt{2}\sin\left(\frac{k\pi t}{T}\right)}{k\pi}=\sqrt{T}\sum_{k=1}^{\infty}Z_{k} g_{T, k}\left( t\right) \label{eq:2}
\end{equation}
for $t\in\left[0,T\right],$ where $Z_{k}$ are i.i.d. standard Gaussian random variables. Thus, in order to couple two Brownian bridges on $\left[0,T\right]$, we will couple the random variables $\left\{ Z_{k}\right\} _{k\geqslant 1}$. We now state and prove the following lemmas.

\begin{lemma}\label{lem:1}
There exists a non-Markovian coupling of the diffusions

\begin{align*}
& \left\{\left(B_{1}(t), B_{2}(t), a+\int_0^t B_{2}(s)dB_{1}(s)\right): t \geqslant 0 \right\},
\\
& \left\{ \left(\widetilde{B}_{1}(t), \widetilde{B}_{2}(t), \widetilde{a}+\int_0^t\widetilde{B}_{2}(s)d\widetilde{B}_{1}(s)\right): t \geqslant 0 \right\},
\\
& B_1(0)=\widetilde{B}_1(0)=b_{1}, B_2(0)=\widetilde{B}_2(0)=b_{2}, \text{ and } a >\widetilde{a},
\end{align*}for which the coupling time $\tau$ satisfies
\[
\mathbb{P}\left(\tau>t\right)\leqslant C\frac{\left(a-\widetilde{a}\right)}{t}
\]
for some constant $C>0$ that does not depend on the starting points
and $t\geqslant \left(a-\widetilde{a}\right)$.
\end{lemma}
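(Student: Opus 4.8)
The plan is to keep the first coordinates of the two diffusions glued together, $B_1=\widetilde B_1=:W_1$, and to couple the second driving Brownian motions $B_2,\widetilde B_2$ (both standard, started at $b_2$) in a non-Markovian way that eliminates the height discrepancy. With $B_1=\widetilde B_1$ this discrepancy is
\[
\Delta_t:=\Big(a+\int_0^tB_2\,dB_1\Big)-\Big(\widetilde a+\int_0^t\widetilde B_2\,d\widetilde B_1\Big)=(a-\widetilde a)+\int_0^t\big(B_2(s)-\widetilde B_2(s)\big)\,dW_1(s),
\]
a continuous local martingale with $\Delta_0=a-\widetilde a>0$ and $\langle\Delta\rangle_t=\int_0^t(B_2-\widetilde B_2)^2\,ds$. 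The coupling is complete as soon as $\Delta$ reaches $0$ \emph{at an instant where simultaneously $B_2=\widetilde B_2$}, because then all three coordinates agree and the two processes can be run identically thereafter; so everything reduces to coupling $B_2$ and $\widetilde B_2$ so as to achieve this, for which I would adapt the Kolmogorov-diffusion construction of \cite{BanerjeeKendall2016a}.

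For that, fix deterministic times $0=T_0<T_1<\cdots$ growing geometrically with ratio close to $1$, so that $T_{n+1}-T_n\asymp T_n$. On each block $I_n=[T_n,T_{n+1}]$ I would couple $B_2$ and $\widetilde B_2$ as in \eqref{eq:1}--\eqref{eq:2}: each equals its common value at $T_n$, plus a standard Brownian bridge on $I_n$ (expanded via the Karhunen--Lo\`eve series \eqref{eq:2}), plus the \emph{same} linear-in-$t$ Gaussian endpoint term. This automatically forces $B_2(T_n)=\widetilde B_2(T_n)$ for every $n$, so the driving motions agree on the whole grid, while on $I_n$ one has $B_2-\widetilde B_2=B^{\text{br}}_n-\widetilde B^{\text{br}}_n$, a difference of two coupled standard bridges whose contribution $\int_{I_n}(B^{\text{br}}_n-\widetilde B^{\text{br}}_n)\,dW_1$ to $\Delta$ is conditionally centered with spread of order $T_{n+1}-T_n\asymp T_n$. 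The remaining degrees of freedom are the couplings of the two families of Karhunen--Lo\`eve coefficients on the successive blocks, and these I would use to ``steer'' $\Delta$: whenever $\Delta_{T_n}$ lies in a window of size comparable to the next increment's spread, couple the coefficients by a reflection-type coupling along the direction of the functional $k\mapsto\int_{I_n}g_{T_{n+1}-T_n,k}\,dW_1$, arranged so that $\Delta_{T_{n+1}}=0$ on an event of probability at least some universal $p_0>0$; otherwise (and on the complementary event) use the independent-bridge coupling, under which $\Delta$ merely random-walks. At the first block on which $\Delta_{T_{n+1}}=0$ one sets $\tau=T_{n+1}$, and since at that time $W_1$, $B_2=\widetilde B_2$ and the heights all coincide, running synchronously afterwards is consistent.

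The rate then comes from a time change. Because $\langle\Delta\rangle_{T_n}\asymp\sum_{m<n}(T_{m+1}-T_m)^2\asymp T_n^2$, Dambis--Dubins--Schwarz gives $\Delta_t=\gamma(\langle\Delta\rangle_t)$ for a Brownian motion $\gamma$ started at $a-\widetilde a$; hence $\langle\Delta\rangle_\infty=\infty$, $\gamma$ returns to every neighbourhood of $0$ infinitely often, and, with the constant-probability trials afforded by $p_0$, $\tau<\infty$ almost surely. For the tail, on $\{\tau>t\}$ either $\gamma$ has not reached $0$ by time $\langle\Delta\rangle_t\asymp t^2$, or it has but we missed closing at each of the $\asymp\log t$ available blocks; the first probability is $\mathbb P(|\mathcal N(0,ct^2)|<a-\widetilde a)\le C(a-\widetilde a)/t$ by the reflection principle, and the second is $(1-p_0)^{c'\log t}$, which is also $\le C(a-\widetilde a)/t$ once the geometric ratio is close enough to $1$ that $c'$ is large. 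Summing yields $\mathbb P(\tau>t)\le C(a-\widetilde a)/t$ for $t\ge a-\widetilde a$.

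The hard part is the steering step: constructing the reflection-type coupling of the Karhunen--Lo\`eve coefficients on a block so that $\Delta$ lands exactly on $0$ at the next grid time on an event of probability bounded away from $0$, all the while keeping both $B^{\text{br}}_n$ and $\widetilde B^{\text{br}}_n$ \emph{exactly} standard Brownian bridges (so that the marginal law of each diffusion is unchanged); and the bookkeeping that upgrades ``uniformly positive chance of closing at each of $\asymp\log t$ windows'' to the clean $t^{-1}$ decay. The bridge expansion, the choice of grid, and the analysis of the time-changed martingale $\Delta$ are otherwise routine.
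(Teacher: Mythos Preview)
Your high-level strategy matches the paper's: synchronize $B_1=\widetilde B_1$, decompose $B_2$ on geometric blocks into bridge plus endpoint via \eqref{eq:1}--\eqref{eq:2}, and couple one Karhunen--Lo\`eve coefficient per block by a reflection rule so as to drive the height discrepancy $\Delta$ to zero at a grid time. Where you diverge---and where the argument actually breaks---is the two-case ``window'' coupling and the ensuing tail analysis.

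The paper never switches between a reflection and an independent regime; it \emph{always} reflects the first Karhunen--Lo\`eve coefficient, realized via an auxiliary Brownian motion $W^{(n)}$ as $Z_1^{(n)}=2^{-n/2}W^{(n)}(2^n)$, with $\widetilde W^{(n)}$ the reflection of $W^{(n)}$ at the level that would force $\Delta_{T_{n+1}}=0$. Two consequences you lose by introducing the window dichotomy: (i) $\Delta_{T_n}\ge 0$ for all $n$ automatically, and (ii) before coupling one has the exact representation $\Delta_{T_{n+1}}=\Delta_0+B^\star(T(2^{n+1}-1))$ for a single Brownian motion $B^\star$ independent of $B_1$, where $T(2^{n+1}-1)=\sum_{k\le n}2^k\lambda_k^2$ with $\lambda_k$ independent Gaussians of variance $\asymp 2^k$. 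Coupling is then \emph{exactly} the event that $B^\star$ hits $-\Delta_0$, and after reducing to $\Delta_0=1$ by Brownian scaling the whole tail is the single estimate $\mathbb P(\tau^\star_{-1}>T(2^{n+1}-1))\le C\,\mathbb E[T(2^{n+1}-1)^{-1/2}]\le C\,2^{-n}$.

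Your route via ``either $\gamma$ has not reached $0$, or it has but we missed all $\asymp\log t$ windows'' has real gaps. First, $\langle\Delta\rangle_t$ is random, not $\asymp t^2$ pathwise; you still owe a tail bound of order $t^{-1}$ on the event $\{\langle\Delta\rangle_t<ct^2\}$, which is precisely what the paper's clock handles by computing $\mathbb E[T^{-1/2}]$. Second, $\gamma$ touching $0$ somewhere in $[0,t]$ does \emph{not} imply that $\Delta_{T_n}$ ever landed in your window at a grid time, so the case split is not exhaustive as written. Third, the term $(1-p_0)^{c'\log t}$ carries no factor of $(a-\widetilde a)$; without first rescaling to $a-\widetilde a=1$ this term cannot be absorbed into $C(a-\widetilde a)/t$ when $a-\widetilde a$ is small. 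The remedy is to drop the window condition, always reflect the first coefficient, and observe that the resulting grid process is a time-changed Brownian motion run until it hits $-\Delta_0$; then the rate is a one-line hitting-time bound once $\mathbb E[T(t)^{-1/2}]$ is controlled.
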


\begin{proof}
We will write $I(t)=a+\int_{0}^{t}B_{2}(s)dB_{1}(s)$ and $\widetilde{I}(t)=\widetilde{a}+\int_{0}^{t}\widetilde{B}_{2}(s)d\widetilde{B}_{1}(s)$.
From Brownian scaling, it is clear that for any $r \in \mathbb{R}$, the following distributional equality holds
\begin{align}
\left(\frac{B_{1}(t)}{r},\right. & \left. \frac{B_{2}(t)}{r}, \frac{a+\int_{0}^{t}B_{2}(s)dB_{1}(s)}{r^2} \right) \label{e.scaling}
\\
& \quad \,{\buildrel d \over =}\, \left(B'_{1}(t/r^2), B'_{2}(t/r^2),
\frac{a}{r^2}+\int_{0}^{t/r^2}B'_{2}(s)dB'_{1}(s)\right), \notag
\end{align}
where $B'_{1}, B'_{2}$ are independent Brownian motions with $B'_{1}(0)=b_1/r$, $B'_{2}(0)=b_2/r$.
Thus we can assume $a-\widetilde{a}=1$. For the general case, we can obtain the corresponding coupling by applying the same coupling strategy to the scaled process  using \eqref{e.scaling} with $r=\sqrt{a-\widetilde{a}}$.

Let us divide the non-negative real line into  intervals $\left[2^{n}-1, 2^{n+1}-1\right], n \geqslant 0$. We will synchronously couple $B_{1}$ and $\widetilde{B}_{1}$ at all
times. Thus, we sample the same Brownian path for $B_{1}$ and $\widetilde{B}_{1}$.
\emph{Conditional on this Brownian path} $\left\{ B_{1}(t):t\geqslant 0\right\}$ we
describe the coupling strategy for $B_{2}$ and $\widetilde{B}_{2}$ inductively
on successive intervals. Suppose we have constructed the coupling
on $\left[0,2^{n}-1\right]$ in such a way that the coupled Brownian
motions $B_{2}$ and $\widetilde{B}_{2}$ satisfy $B_{2}(2^{n}-1)=\widetilde{B}_{2}(2^{n}-1)=b_{2}$
and $I(2^n-1)>\widetilde{I}(2^n-1)$.
Conditional on $\left\{ \left(B_{2}(t),\widetilde{B}_{2}(t)\right):t\leqslant 2^{n}-1\right\} $
and the whole Brownian path $B_{1}$, we will construct the coupling
of $B_{2}(t)-b_{2}$ and $\widetilde{B}_{2}(t)-b_{2}$ for $t\in\left[2^{n}-1,2^{n+1}-1\right]$.
To this end, we will couple two Brownian bridges $B^{\text{br}}$
and $\widetilde{B}^{\text{br}}$ on $\left[2^{n}-1,2^{n+1}-1\right]$,
then sample an independent Gaussian random variable $G^{(2^{n})}$
with mean zero, variance $2^n$ and finally use the recipe \eqref{eq:1}
to get the coupling of $B_{2}$ and $\widetilde{B}_{2}$ on $\left[2^{n}-1,2^{n+1}-1\right]$.

Let $\left(Z_{1}^{(n)}, Z_{2}^{(n)}, \dots\right)$ and $\left(\widetilde{Z}_{1}^{(n)}, \widetilde{Z}_{2}^{(n)},\dots\right)$
denote the Gaussian coefficients in the Karhunen-Lo\`{e}ve expansion \eqref{eq:2}
corresponding to $B^{\text{br}}$ and $\widetilde{B}^{\text{br}}$ respectively.
Sample i.i.d Gaussians $Z_{k}$ and set $Z_{k}^{(n)}=\widetilde{Z}_{k}^{(n)}=Z_{k}$
for $k\geqslant 2$. Now we construct the coupling of $Z_{1}^{(n)}$ and
$\widetilde{Z}_{1}^{(n)}$. Let $W^{(n)}$ be a standard Brownian motion
starting from zero, independent of $\left\{ \left(B_{2}(t),\widetilde{B}_{2}(t)\right):t\leqslant 2^{n}-1\right\}$, $\left\{ Z_{k}\right\} _{k\geqslant 2}$ and $B_{1}$. In what follows we will repeatedly use the following random functional
\begin{equation}\label{e.lambda}
\lambda_{n}\left( t \right)=\frac{2}{\pi}\int_{2^{n}-1}^{t}\sqrt{2}\sin\left(
\frac{\pi (s-2^n+1)}{2^{n}} \right)dB_{1}(s), 2^{n}-1 \leqslant t \leqslant 2^{n+1}-1.
\end{equation}
Define the random time $\sigma^{(n)}$ by

\[
\sigma^{(n)}=
\left\{
\begin{array}{ll}
 \inf\left\{ t\geqslant 0: W^{(n)}(t)=-\frac{\left(I(2^{n}-1)-\widetilde{I}(2^{n}-1)\right)}{\lambda_{n}\left( 2^{n+1}-1 \right)}
\right\}, & \text{ if } \lambda_{n}\left(2^{n+1}-1\right)\not=0,
 \\
  \infty, & \text{ otherwise. }
\end{array}
\right.
\]

%\begin{align*}
%& \sigma^{(n)}= \inf\left\{ t\geqslant %0:W^{(n)}(t)=-\frac{\left(I(2^{n}-1)-\widetilde{I}(2^{n}-1)\right)}{\lambda_{n}}
%\right.
%\\
%& \left.
%-\left(I(2^{n}-1)-\widetilde{I}(2^{n}-1)\right)\Bigg/\left(\frac{2}{\pi}\int_{2^{n}-1}^{2^{n+1}-1}\sqrt{2}\sin\left(\frac{\pi %(s-2^n+1)}{2^{n}}\right)dB_{1}(s)\right)\right\},
%\end{align*}
%when
%\[
%\int_{2^{n}-1}^{2^{n+1}-1}\sqrt{2}\sin\left(\pi (s-2^n+1)/2^{n}\right)dB_{1}(s)\neq0,
%\]
%and $\sigma^{(n)}=\infty$ otherwise.
As $\lambda_{n}\left( 2^{n+1}-1 \right)$ is a Gaussian random variable with mean zero and variance
\[
\frac{4}{\pi^{2}}\int_{2^{n}-1}^{2^{n+1}-1}2\sin^{2}\left( \frac{\pi (s-2^n+1)}{2^{n}} \right)ds = \frac{2^{n+2}}{\pi^{2}},
\]
the time $\sigma^{(n)}$ is finite for almost every realization of the Brownian
path $B_{1}$. Now, define $\widetilde{W}^{(n)}$ as follows
\[
\widetilde{W}^{(n)}(t)=\begin{cases}
-W^{(n)}(t) & \mbox{if }t\leqslant \sigma^{(n)}\\
W^{(n)}(t)-2W^{(n)}\left(\sigma^{(n)}\right) & \text{ if }t>\sigma^{(n)}.
\end{cases}
\]
Conditional on $\left\{ \left(B_{2}(t),\widetilde{B}_{2}(t)\right):t\leqslant 2^{n}-1\right\} ,$
$\left\{ Z_{k}\right\} _{k\geqslant 2}$ and $B_{1}$, $\sigma^{(n)}$ is
a stopping time for $W^{(n)}$. Thus $\widetilde{W}^{(n)}$ defined above is also
a Brownian motion independent of $\left\{ \left(B_{2}(t),\widetilde{B}_{2}(t)\right):t\leqslant 2^{n}-1\right\}$,
$\left\{ Z_{k}\right\}_{k\geqslant 2}$ and $B_{1}.$

Finally, we set $Z_{1}^{(n)}=2^{-n/2}W^{(n)}\left(2^{n}\right)$ and $\widetilde{Z}_{1}^{(n)}=2^{-n/2}\widetilde{W}^{(n)}\left(2^{n}\right)$.
Under this coupling we get
\begin{equation}\label{levypart}
I(t)-\widetilde{I}(t)=I\left(2^{n}-1\right)-\widetilde{I}\left(2^{n}-1\right)
+W^{(n)}\left(2^{n}\wedge\sigma^{(n)}\right) \lambda_{n}\left( t \right)
%\frac{2}{\pi}\int_{2^{n}-1}^{t}\sqrt{2}\sin\left(\frac{\pi (s-2^n+1)}{2^{n}}\right)dB_{1}(s)
,
\end{equation}
for $t\in\left[2^{n}-1,2^{n+1}-1\right]$. In particular, $I\left(2^{n+1}-1\right)-\widetilde{I}\left(2^{n+1}-1\right)\geqslant 0$
and equals to zero if and only if $\sigma^{(n)}\leqslant 2^{n}$.
%If $I\left(2^{n}-1\right)-\widetilde{I}\left(2^{n+1}-1\right)\geq0$
%and equals zero if and only if $\sigma^{(n)}\leqslant 2^{n}.$
If $I\left(2^{n}-1\right)-\widetilde{I}\left(2^{n}-1\right)=0,$
we synchronously couple $B_{2},\widetilde{B}_{2}$ after time $2^{n}-1$. By
induction, the coupling is defined for all time.

Now, we claim that the coupling constructed above gives the required bound on the coupling rate. %Define $\lambda_{n}=\frac{2}{\pi}\int_{2^{n}-1}^{2^{n+1}-1}\sqrt{2}\sin\left(\pi %(s-2^n+1)/2^{n}\right)dB_{1}(s)$.
Using L\'{e}vy's characterization of Brownian motion and the fact that
the $\left\{ W^{(n)}\right\} _{n\geqslant 1}$ are independent of the Brownian
path $B_{1},$ we obtain a Brownian motion $B^{\star}$ independent
of $B_{1}$ such that for all $t\geqslant 0$,
\[
\sum_{k=0}^{\infty}\lambda_{k}\left( 2^{k+1}-1 \right) W^{(k)}\left(\left(t-2^{k}+1\right)^{+}\wedge2^{k}\right)=B^{\star}\left(T(t)\right),
\]
where
\[
T(t)=\int_{0}^{t}\sum_{k=0}^{\infty}\lambda_{k}^{2}\left( 2^{k+1}-1 \right) \mathbbm{1}\left(2^{k}-1<s\leqslant 2^{k+1}-1\right)ds.
\]
Note that for any $n\geqslant 0,$ the coupling happens after time $2^{n+1}-1$
if and only if $\sigma^{(k)}>2^{k}$ for all $k\leqslant  n,$ that is, $B^{\star}(t)>\left(\widetilde{a}-a\right)=-1$
for all $t\leqslant  T\left(2^{n+1}-1\right)$. Therefore, if for $y\in\mathbb{R},$
$\tau_{y}^{\star}$ denoted the hitting time of level $y$ for the
Brownian motion $B^{\star},$ then we have
\[
\mathbb{P}\left(\tau>2^{n+1}-1\right)=\mathbb{P}\left(\tau_{-1}^{\star}>T\left(2^{n+1}-1\right)\right).
\]
By a standard hitting time estimate for Brownian motion, we see that there is
a constant $C>0$ that does not depend on $b_1, b_2, a, \widetilde{a}$ such that
\begin{equation}\label{eq:3}
\mathbb{P}\left(\tau>2^{n+1}-1\right)\leqslant  C\mathbb{E}\left[\frac{1}{\sqrt{T\left(2^{n+1}-1\right)}}\right].
\end{equation}
Thus, we need to obtain an estimate for the right hand side in \eqref{eq:3}. Note that $2^{-2n}T\left(2^{n+1}-1\right)$ has the same distribution as
\[
\Psi_{n}:=\frac{4}{\pi^2}\sum_{k=0}^{n}2^{-2k}U_{k}^{2},
\] where the $U_{k}$ are i.i.d. standard Gaussian random variables.
%and set
%\[
%\Psi:=\frac{4}{\pi^2}\sum_{k=0}^{\infty}2^{-2k}U_{k}^{2}.
%\]
%Then $\mathbb{E}\left[\Psi\right]<\infty$ and thus, $\Psi$ is almost surely finite.

%Note that $\Psi_{n}^{-1/2}\to\Psi^{-1/2}$ almost surely.
For $n\geqslant 1,$
$\Psi_{n}^{-1/2}\leqslant \Psi_{1}^{-1/2}\leqslant  \pi \left(U_{0}^{2}+U_{1}^{2}\right)^{-1/2}$. As
$\sqrt{U_{0}^{2}+U_{1}^{2}}$ has density $re^{-r^{2}/2}dr$ with respect to the Lebesgue measure for $r \geqslant 0$, we conclude that
$\mathbb{E}\left[\pi\left(U_{0}^{2}+U_{1}^{2}\right)^{-1/2}\right]<\infty.$
Thus, for $n \geqslant  1$
\[
\mathbb{E}\left[\frac{1}{\sqrt{2^{-2n}T\left(2^{n+1}-1\right)}}\right] = \mathbb{E}\left[\Psi_{n}^{-1/2}\right] \leqslant \mathbb{E}\left[\Psi_{1}^{-1/2}\right] \leqslant  \mathbb{E}\left[\pi \left(U_{0}^{2}+U_{1}^{2}\right)^{-1/2}\right] <\infty.
\]
%by the dominated convergence theorem, $\mathbb{E}\left[\Psi^{-1/2}\right]$
%is finite and $\mathbb{E}\left[\Psi_{n}^{-1/2}\right]\to\mathbb{E}\left[\Psi^{-1/2}\right]$
%as $n\to\infty$.
This, along with \eqref{eq:3}, implies that there is a positive constant $C$ not depending on $b_1, b_2, a,\widetilde{a}$ such that for $n \geqslant 1$,
\[
\mathbb{P}\left(\tau>2^{n+1}-1\right)\leqslant \frac{C}{2^{n}}.
\]
It is easy to check that the above inequality implies the lemma.
\end{proof}
\begin{rem}\label{rem:Mark}
Under the hypothesis of Lemma \ref{lem:1}, it is \textit{not possible to obtain the given rate of decay} of the probability of failing to couple by time $t$ (coupling rate) with \textit{any Markovian coupling}. The proof of this proceeds similar to that of \cite[Lemma 3.1]{BanerjeeKendall2016a}. We sketch it here. Under any Markovian coupling $\mu$, a simple Fubini argument shows that there exists a deterministic time $t_0>0$ such that $\mu\left(B(t_0) \neq \widetilde{B}(t_0)\right) >0$. Let $\tau^B$ represent the first time when the Brownian motions $B$ and $\widetilde{B}$ meet after time $t_0$ (which should happen at or before the coupling time of $\mathbf{X}$ and $\widetilde{\mathbf{X}}$). Let $\mathcal{F}_{t_0}$ denote the filtration generated by $B$ and $\widetilde{B}$ up to time $t_0$ and let $\mathbb{E}_{\mu}$ denote expectation under the coupling law $\mu$. Then, from the fact that the maximal coupling rate of Brownian motion (equivalently the total variation distance between $B(t)$ and $\widetilde{B}(t)$) decays like $t^{-1/2}$, we deduce that for sufficiently large $t$
\begin{align*}
\mu(\tau>t) &= \mathbb{E}_{\mu} \mathbb{E}_{\mu}\left[\tau>t \mid \mathcal{F}_{t_0}\right] \geqslant \mathbb{E}_{\mu} \mathbb{E}_{\mu}\left[\tau^B>t \mid \mathcal{F}_{t_0}\right]\\
&\geqslant C_{\mu}(t-t_0)^{-1/2} \geqslant C_{\mu}t^{-1/2},
\end{align*}
where $C_{\mu}$ denotes a positive constant that depends on the coupling $\mu$. Thus, any Markovian coupling has coupling rate at least $t^{-1/2}$, but the non-Markovian coupling described in Lemma \ref{lem:1} gives a rate of $t^{-1}$.
\end{rem}
The next lemma gives an estimate of the tail of the law of the stochastic integral $\int_{0}^{t}B_{2}(s)dB_{1}(s)$ run until
the first time $B_{2}$ hits zero.

\begin{lemma} \label{lem:2}
Let $B_{1}, B_{2}$ be independent Brownian motions with $B_{2}(0)=b>0$.
For $z\in\mathbb{R}$, let $\tau_{z}$ denote the hitting time of
level $z$ by $B_{2}$. Then
\[
\mathbb{P}\left(\int_{0}^{\tau_{0}}B_{2}(s)dB_{1}(s)>y\right)\leqslant \frac{2b}{\sqrt{y}} \hskip0.1in \text{ for } y\geqslant  b^{2}.
\]
\end{lemma}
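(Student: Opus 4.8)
The plan is to bound the stochastic integral $M(t) := \int_0^t B_2(s)\,dB_1(s)$ by a time change and then apply an appropriate hitting-time estimate. First I would observe that, conditionally on the path of $B_2$, $M$ is a continuous local martingale with quadratic variation $\langle M\rangle_t = \int_0^t B_2(s)^2\,ds$. By the Dambis--Dubins--Schwarz theorem there is a Brownian motion $\beta$ (depending on the realization of $B_2$) with $M(t) = \beta(\langle M\rangle_t)$. Hence, writing $A_0 := \langle M\rangle_{\tau_0} = \int_0^{\tau_0} B_2(s)^2\,ds$, we have $M(\tau_0) = \beta(A_0)$, and so
\[
\mathbb{P}\left(M(\tau_0) > y\right) = \mathbb{P}\left(\beta(A_0) > y\right).
\]
Since $\beta$ is independent of nothing in general, the clean route is to split on the size of $A_0$: on $\{A_0 \leqslant a\}$ use that $\sup_{s\leqslant a}\beta(s)$ has a Gaussian-type tail, while $\{A_0 > a\}$ is controlled directly from the law of $\tau_0$ and $B_2$.

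The key calculation is then to estimate $A_0 = \int_0^{\tau_0} B_2(s)^2\,ds$. Starting from $B_2(0) = b$, I expect the right scaling to be $A_0 \sim b^4$ in distribution: indeed Brownian scaling gives $\int_0^{\tau_0}B_2(s)^2\,ds \overset{d}{=} b^4 \int_0^{\tau_0'} W(s)^2\,ds$ where $W$ starts from $1$ and $\tau_0'$ is its hitting time of $0$. So I would pick the threshold $a = c\,b^2\sqrt{y}$ (note $a \geqslant b^4$ in the regime $y \geqslant b^2$) and estimate
\[
\mathbb{P}\left(A_0 > c\,b^2\sqrt{y}\right) \leqslant \frac{C}{\sqrt{c}}\cdot\frac{b^2}{b\,y^{1/4}}\cdot(\text{something})\,?
\]
— more carefully, since $\mathbb{E}[A_0^{1/2}]$ or a suitable fractional moment of $A_0$ scales like $b^2$, a Markov inequality on $A_0^{1/2}$ bounds $\mathbb{P}(A_0 > a)$ by $C b^2/\sqrt{a}$. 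Meanwhile, on $\{A_0 \leqslant a\}$, $\mathbb{P}(\sup_{s\leqslant a}\beta(s) > y) \leqslant e^{-y^2/(2a)}$, but since we only want a power decay it is enough to use $\mathbb{P}(\beta(s)>y \text{ for some } s\leqslant a) \leqslant 2\mathbb{P}(\beta(a) > y) \leqslant \frac{\sqrt{a}}{y}\cdot\frac{1}{\sqrt{2\pi}}$ (via the standard bound $\mathbb{P}(N(0,1) > x) \leqslant \tfrac{1}{x}$ on the relevant scale). Plugging $a \asymp b^2\sqrt{y}$ into both pieces yields a bound of order $b^2/\sqrt{a} \asymp b/y^{1/4}$ and $\sqrt{a}/y \asymp b/y^{3/4}$; combining and tracking constants should give the stated $2b/\sqrt{y}$ after optimizing the split point. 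One may need to take $a \asymp b\sqrt{y}$ instead to balance the exponents to $y^{-1/2}$ — I would choose the threshold to make the two contributions match, which forces $a$ of order $b^2 y^{1/2}$ vs $b^{4/3}y^{2/3}$ and picking the larger; the precise constant $2$ is then a matter of being slightly generous.

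The main obstacle I anticipate is controlling $\mathbb{P}(A_0 > a)$ with the correct power of $b$ and $y$: the fractional moment $\mathbb{E}[A_0^{1/2}]$ must be shown finite and $\asymp b^2$, which can be done either by the scaling identity above together with the fact that for standard Brownian motion from $1$, $\int_0^{\tau_0} W^2 \,ds$ has finite moments of sufficient order (since $\tau_0$ has a heavy tail $\sim s^{-1/2}$ but $W$ is small near $\tau_0$ — a more careful argument pairs the integral with the running maximum via the occupation-time formula or an explicit computation using the fact that $\int_0^{\tau_0}W^2\,ds$ can be expressed through exponential functionals). An alternative, perhaps cleaner, route avoids DDS entirely: apply the Burkholder--Davis--Gundy inequality or a direct $L^p$ estimate to $M(\tau_0)$, namely $\mathbb{E}[M(\tau_0\wedge\tau_R)^2] = \mathbb{E}[\int_0^{\tau_0\wedge\tau_R}B_2^2\,ds]$ and bound the latter by conditioning on $\sup B_2$, then pass to the limit. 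Whichever route, the heart of the matter is the same estimate on the time-changed clock $A_0$, and getting its tail to decay like $a^{-1/2}$ with the $b^2$ prefactor is where the real work lies; the rest is bookkeeping with Gaussian tails.
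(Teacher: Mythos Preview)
Your main approach has a genuine gap: the claim that $\mathbb{E}[A_0^{1/2}]$ is finite (and $\asymp b^2$) is false. By Brownian scaling $A_0 \overset{d}{=} b^4 \tilde A$ with $\tilde A = \int_0^{\tilde\tau} W(s)^2\,ds$, $W(0)=1$; the running maximum $M$ of $W$ before $\tilde\tau$ has tail $\mathbb{P}(M>z)=1/z$, and on $\{M\geqslant z\}$ one has $\tilde A$ of order $z^4$ with probability bounded below. Hence $\mathbb{P}(\tilde A>a)\asymp a^{-1/4}$, so $\mathbb{E}[\tilde A^{\,p}]<\infty$ only for $p<1/4$. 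A Markov bound on $A_0^{1/2}$ is therefore unavailable. If you instead use the correct tail $\mathbb{P}(A_0>a)\lesssim b\,a^{-1/4}$ and combine it with the Gaussian bound $\mathbb{P}(\beta(A_0)>y,\,A_0\le a)\le \sqrt{a}/(y\sqrt{2\pi})$, optimising in $a$ gives only $Cb^{2/3}y^{-1/3}$ (and using the exponential Gaussian tail instead leaves an unavoidable $\log^{1/4}$ factor). So the DDS\,+\,split-on-$A_0$ route cannot deliver $2b/\sqrt{y}$.

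The paper's proof is the ``alternative'' you mention in your last paragraph, carried out cleanly: split on whether $B_2$ reaches a level $z\ge b$ before hitting $0$, not on $A_0$. On $\{\tau_z<\tau_0\}$ one pays $\mathbb{P}(\tau_z<\tau_0)=b/z$. On $\{\tau_z\ge\tau_0\}$ the integral equals $M(\tau_0\wedge\tau_z)$, and Chebyshev with the exact $L^2$ identity gives
\[
\mathbb{P}\bigl(M(\tau_0\wedge\tau_z)>y\bigr)\le \frac{\mathbb{E}\!\int_0^{\tau_0\wedge\tau_z}B_2^2\,ds}{y^2}\le \frac{z^2\,\mathbb{E}[\tau_0\wedge\tau_z]}{y^2}=\frac{z^2\,b(z-b)}{y^2}\le\frac{bz^3}{y^2}.
\]
Thus $\mathbb{P}(M(\tau_0)>y)\le b/z+bz^3/y^2$, and choosing $z=\sqrt{y}$ (valid since $y\ge b^2$) yields exactly $2b/\sqrt{y}$. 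The point is that splitting on $\sup B_2$ rather than on $A_0$ lets you exploit $\mathbb{E}[\tau_0\wedge\tau_z]=b(z-b)$, which is what produces the sharper exponent.
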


\begin{proof}
For any level $z\geqslant  b$, we can write
\begin{align*}
& \mathbb{P}\left(\int_{0}^{\tau_{0}}B_{2}(s)dB_{1}(s)>y\right)  =
\\
& \mathbb{P}\left(\int_{0}^{\tau_{0}}B_{2}(s)dB_{1}(s)>y, \tau_{z}<\tau_{0}\right)+
 \mathbb{P}\left(\int_{0}^{\tau_{0}}B_{2}(s)dB_{1}(s)>y, \tau_{z}\geqslant \tau_{0}\right) \leqslant
\\
&\mathbb{P}\left(\tau_{z}<\tau_{0}\right)+\frac{\mathbb{E}\left[\int_{0}^{\tau_{0}\wedge\tau_{z}}B_{2}^{2}(s)ds\right]}{y^{2}}
  \leqslant
  \\& \mathbb{P}\left(\tau_{z}<\tau_{0}\right)+\frac{z^{2}}{y^{2}}\mathbb{E}\left[\tau_{0}\wedge\tau_{z}\right],
\end{align*}
where the second step follows from Chebyshev's inequality. From standard estimates for Brownian motion, $\mathbb{P}\left(\tau_{z}<\tau_{0}\right)=b/z$
and $\mathbb{E}\left[\tau_{0}\wedge\tau_{z}\right]=b(z-b) \leqslant bz$. Using these
in the above, we get
\[
\mathbb{P}\left(\int_{0}^{\tau_{0}}B_{2}(s)dB_{1}(s)>y\right)\leqslant \frac{b}{z}+\frac{bz^{3}}{y^{2}}.
\]
As this bound holds for arbitrary $z\geqslant  b$, the result follows
by choosing $z=\sqrt{y}$.
\end{proof}
%The next lemma gives an estimate on $\mathbb{E}\left|\frac{A(T_1)}{t} \wedge 1\right|$ for large enough $t$.
%\begin{lemma}\label{lem:Aest}
%For $t\geqslant \max\left\{ \left|\mathbf{b}-\widetilde{\mathbf{b}}\right|^{2},2\left|a-\widetilde{a}+b_{1}\widetilde{b}_{2}-b_{2}\widetilde{b}_{1}\right|\right\} $, there exists a positive constant not depending on the starting points such that
%$$
%\mathbb{E}\left|\frac{A(T_1)}{t} \wedge 1\right| \leqslant C\left(\frac{\left|\mathbf{b}-\widetilde{\mathbf{b}}\right|}{\sqrt{t}}+\frac{\left|a-\widetilde{a}+b_{1}\widetilde{b}_{2}-b_{2}\widetilde{b}_{1}\right|}{t}\right).
%$$
%\end{lemma}
Consider two coupled Brownian motions $\left(\mathbf{X},\widetilde{\mathbf{X}}\right)$ on the Heisenberg group starting from $(b_{1}, b_{2},a)$ and $\left(b_{1},\widetilde{b}_{2}, \widetilde{a}\right)$ respectively. A key object in our coupling construction for Brownian motions on the Heisenberg group $\mathbb{H}^{3}$ will be the \emph{invariant difference of stochastic areas given} by
\begin{align}\label{align:inv}
A(t) & =
\left(a-\widetilde{a}\right)
+\left(\int_{0}^{t}B_{1}(s)dB_{2}(s)-\int_{0}^{t}B_{2}(s)dB_{1}(s)\right)
\\
&
-\left(\int_{0}^{t}\widetilde{B}_{1}(s)d\widetilde{B}_{2}(s)-\int_{0}^{t}\widetilde{B}_{2}(s)d\widetilde{B}_{1}(s)\right)\notag
 + B_{1}(t)\widetilde{B}_{2}(t)-B_{2}(t)\widetilde{B}_{1}(t).
\end{align}

Note that the L\'{e}vy stochastic area is invariant under rotations of
coordinates. If the Brownian motions $B_{1}$ and $\widetilde{B}_{1}$ are synchronously coupled at all times, then as the covariation between $B_1$ and $B_2$ (and between $B_1$ and $\widetilde{B}_2$) is zero,
\begin{equation}\label{invito}
A(t) - A(0)=-2\int_0^tB_{2}(s)dB_{1}(s) + 2\int_0^t\widetilde{B}_{2}(s)dB_1(s),
\end{equation}
where
\begin{equation}\label{Azero}
A(0) = a-\widetilde{a}+b_{1}\widetilde{b}_{2}-b_{2}\widetilde{b}_{1},
\end{equation}
for $t\geqslant 0$. The next lemma establishes a control on the invariant difference evaluated at the time when the Brownian motions $B_2$ and $\widetilde{B}_2$ first meet, provided they are reflection coupled up to that time.
\begin{lemma}\label{invcontrol}
Let $B_1$ be a real-valued Brownian motion starting from $b_1$, and let $B_2, \widetilde{B}_2$ be reflection coupled one-dimensional Brownian motions starting from $b_2$ and $\widetilde{b}_2$ respectively. Consider the invariant difference of stochastic areas given by \eqref{align:inv} with $B_1= \widetilde{B}_1$. Define
$
T_{1}=\inf\left\{ t\geqslant 0:B_{2}(t)=\widetilde{B}_{2}(t)\right\}.
$
Then there exists a positive constant $C$ that does not depend on $b_1, b_2, \widetilde{b}_2, a, \widetilde{a}$ such that for any $t\geqslant \max\left\{ \left|b_2-\widetilde{b}_2\right|^{2},2\left|a-\widetilde{a}+b_{1}\widetilde{b}_{2}-b_{2}\widetilde{b}_{1}\right|\right\} $,
\begin{equation*}
\mathbb{E}\left[\frac{\left|A\left(T_{1}\right)\right|}{t}\wedge1\right] \leqslant  C\left(\frac{\left|b_2-\widetilde{b}_2\right|}{\sqrt{t}}+\frac{\left|a-\widetilde{a}+b_{1}\widetilde{b}_{2}-b_{2}\widetilde{b}_{1}\right|}{t}\right).
\end{equation*}
\end{lemma}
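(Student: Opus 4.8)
\emph{Proof proposal.} Write $\delta := |b_2-\widetilde b_2|$, and recall from \eqref{Azero} that $A(0)=a-\widetilde a+b_1\widetilde b_2-b_2\widetilde b_1$; by hypothesis $\delta^2\leqslant t$ and $2|A(0)|\leqslant t$. If $\delta=0$ then $T_1=0$ and $A(T_1)=A(0)$, so the estimate is immediate; assume $\delta>0$. The plan is first to recognize the increment $A(T_1)-A(0)$ as a stochastic integral of exactly the type treated in Lemma \ref{lem:2}. Since $B_1=\widetilde B_1$, combining the two integrals in \eqref{invito} gives $A(t)-A(0)=-2\int_0^t\bigl(B_2(s)-\widetilde B_2(s)\bigr)\,dB_1(s)$. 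Because $B_2$ and $\widetilde B_2$ are reflection coupled, with the reflection driven by a Brownian motion independent of $B_1$, on the interval $[0,T_1]$ one has $B_2(s)-\widetilde B_2(s)=2\bar B_2(s)$, where $\bar B_2$ is a standard one-dimensional Brownian motion started from $\delta/2>0$, independent of $B_1$, and $T_1=\inf\{s\geqslant 0:\bar B_2(s)=0\}$ is its first hitting time of the origin. Consequently $A(T_1)-A(0)=-4\int_0^{T_1}\bar B_2(s)\,dB_1(s)$.

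Next I would apply Lemma \ref{lem:2} to the independent pair $(\bar B_2,B_1)$ with $b=\delta/2$, and then again to $(\bar B_2,-B_1)$, and combine the two resulting one-sided tail bounds to obtain
\[
\mathbb{P}\bigl(|A(T_1)-A(0)|>y\bigr)=\mathbb{P}\Bigl(\Bigl|\textstyle\int_0^{T_1}\bar B_2(s)\,dB_1(s)\Bigr|>\tfrac{y}{4}\Bigr)\leqslant\frac{4\delta}{\sqrt{y}},\qquad y\geqslant\delta^2 .
\]
Then I would convert this into the stated expectation bound. Using the elementary inequality $\min(u+v,1)\leqslant u+\min(v,1)$ for $u,v\geqslant 0$ together with $|A(T_1)|\leqslant |A(0)|+|A(T_1)-A(0)|$, and then the layer-cake formula,
\[
\mathbb{E}\!\left[\frac{|A(T_1)|}{t}\wedge 1\right]\leqslant\frac{|A(0)|}{t}+\mathbb{E}\!\left[\frac{|A(T_1)-A(0)|}{t}\wedge 1\right]=\frac{|A(0)|}{t}+\frac1t\int_0^t\mathbb{P}\bigl(|A(T_1)-A(0)|>y\bigr)\,dy .
\]
Since $\delta^2\leqslant t$, I split the last integral at $y=\delta^2$, bounding the integrand by $1$ on $[0,\delta^2]$ and by $4\delta/\sqrt{y}$ on $[\delta^2,t]$, so that $\int_0^t\mathbb{P}(\cdots)\,dy\leqslant\delta^2+8\delta\sqrt{t}$; dividing by $t$ and using $\delta^2\leqslant t$ once more to absorb $\delta^2/t\leqslant\delta/\sqrt{t}$ yields the claimed bound with an absolute constant (e.g. $C=9$).

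The main obstacle is the reduction carried out in the first step: one must set up the reflection coupling carefully so that $B_2-\widetilde B_2$ is literally twice a Brownian motion run until it hits zero \emph{and} independent of $B_1$, which is precisely what allows Lemma \ref{lem:2} to be invoked (its hypotheses demand that the two Brownian motions be independent and that the one being integrated start from a strictly positive point). Once that representation is in place, the remainder is a routine assembly of Lemma \ref{lem:2}, the $\min(\cdot,1)$ inequality, the layer-cake formula, and the hypotheses on $t$.
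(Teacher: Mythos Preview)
Your proof is correct and follows the same essential reduction as the paper: both identify $A(t)-A(0)=-2\int_0^t(B_2-\widetilde B_2)\,dB_1$, recognize that under reflection coupling $\tfrac12(B_2-\widetilde B_2)$ is a Brownian motion from $\delta/2$ independent of $B_1$, and then invoke Lemma~\ref{lem:2} (twice, for the two signs) to obtain the tail bound $\mathbb{P}(|A(T_1)-A(0)|>y)\leqslant C\delta/\sqrt{y}$ for $y\geqslant\delta^2$.

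Where you diverge is in the conversion of this tail estimate into the bound on $\mathbb{E}[(|A(T_1)|/t)\wedge 1]$. The paper works directly with $|A(T_1)|$: it decomposes the expectation dyadically as $\sum_k 2^{-k}\mathbb{P}(|A(T_1)|\geqslant 2^{-k-1}t)+\mathbb{P}(|A(T_1)|>t)$, then splits the sum at the index $k_0$ where $2^{-k_0-1}t$ drops below $\max\{\delta^2,2|A(0)|\}$, handling the tail of the sum crudely as a geometric series and the head via Lemma~\ref{lem:2}. You instead peel off $|A(0)|/t$ at the outset via the pointwise inequality $\min(u+v,1)\leqslant u+\min(v,1)$, and then run a continuous layer-cake integral on $|A(T_1)-A(0)|$, splitting the integral at $y=\delta^2$. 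Your route is shorter, avoids the dyadic bookkeeping, and yields an explicit constant ($C=9$), which the paper's argument does not track. The paper's dyadic approach is slightly more flexible in that it never needs to separate $A(0)$ from $A(T_1)-A(0)$ explicitly, but in this instance your decomposition is the cleaner one.
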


\begin{proof}
In the proof, $C, C'$ will denote generic positive constants that do not depend on $b_1, b_2, \widetilde{b}_2, a, \widetilde{a}$, whose values might change from line to line. For any $t>0$,
\begin{align}\label{ar}
\mathbb{E}\left[\frac{\left|A\left(T_{1}\right)\right|}{t}\wedge1\right]
 & \leqslant  \sum_{k=0}^{\infty}\mathbb{E}\left[\frac{\left|A\left(T_{1}\right)\right|}{t}\wedge1;2^{-k-1}t < \left|A\left(T_{1}\right)\right|\leqslant 2^{-k}t\right]+\mathbb{P}\left(\left|A\left(T_{1}\right)\right|>t\right)\nonumber\\
 & \leqslant  \sum_{k=0}^{\infty}2^{-k}\mathbb{P}\left(2^{-k-1}t < \left|A\left(T_{1}\right)\right|\leqslant 2^{-k}t\right) + \mathbb{P}\left(\left|A\left(T_{1}\right)\right|>t\right)\nonumber\\
 & \leqslant  \sum_{k=0}^{\infty}2^{-k}\mathbb{P}\left(\left|A\left(T_{1}\right)\right|\geqslant 2^{-k-1}t\right)+\mathbb{P}\left(\left|A\left(T_{1}\right)\right|>t\right).
\end{align}
As $B_2$ and $\widetilde{B}_2$ are reflection coupled, we can rewrite \eqref{invito} as
\[
A(t) - A(0)=-2\int_0^t\left(B_{2}(s)-\widetilde{B}_{2}(s)\right)dB_1(s)
\]
where $\frac{1}{2}\left(B_{2}-\widetilde{B}_{2}\right)$ is a Brownian motion starting from $\frac{1}{2}\left(b_2 - \widetilde{b}_2\right)$ and independent of $B_1$. By Lemma $\ref{lem:2}$, for $t\geqslant\max\left\{ \left|b_{2}-\widetilde{b}_{2}\right|^{2},2\left|A(0)\right|\right\} ,$
\begin{align}\label{ar1}
\mathbb{P}\left(\left|A\left(T_{1}\right)\right|>t\right) & \leqslant \mathbb{P}\left(\left|A\left(T_{1}\right)-A\left(0\right)\right|>t-\left|A\left(0\right)\right|\right)\nonumber\\
 & \leqslant \mathbb{P}\left(\left|A\left(T_{1}\right)-A\left(0\right)\right|>\frac{t}{2}\right)\leqslant  C\frac{\left|b_{2}-\widetilde{b}_{2}\right|}{\sqrt{t}}.
\end{align}
Further, for $t\geqslant \max\left\{ \left|b_{2}-\widetilde{b}_{2}\right|^{2}, 2\left|A(0)\right|\right\} ,$
\begin{multline}\label{area1}
\sum_{k=0}^{\infty}2^{-k}\mathbb{P}\left(\left|A\left(T_{1}\right)\right|\geqslant 2^{-k-1}t\right)\\
=  \sum_{k:2^{-k-1}t\leqslant \max\left\{ \left|b_{2}-\widetilde{b}_{2}\right|^{2},2\left|A(0)\right|\right\} }2^{-k}\mathbb{P}\left(\left|A\left(T_{1}\right)\right|\geqslant 2^{-k-1}t\right)\\
 +\sum_{k:2^{-k-1}t>\max\left\{ \left|b_{2}-\widetilde{b}_{2}\right|^{2},2\left|A(0)\right|\right\} }2^{-k}\mathbb{P}\left(\left|A\left(T_{1}\right)\right|\geqslant 2^{-k-1}t\right).
% & \leqslant  & C\frac{2\max\left\{ \left|b_{2}-\widetilde{b}_{2}\right|^{2},2\left|A(0)\right|\right\} }{t}\\
% &  & +\frac{C}{\sqrt{t}}\sum_{k:2^{-k-1}t>\max\left\{ \left|b_{2}-\widetilde{b}_{2}\right|^{2},2\left|A(0)\right|\right\} }2^{-k/2}\left|b_{2}-\widetilde{b}_{2}\right|\\
% & \leqslant  & C\left(\frac{\left|A\left(0\right)\right|}{t}+\frac{\left|b_{2}-\widetilde{b}_{2}\right|}{\sqrt{t}}\right).
\end{multline}
To estimate the first term on the right hand side of \eqref{area1}, let $k_0$ be the smallest integer $k$ such that $2^{-k-1}t \leqslant \max\left\{ \left|b_{2}-\widetilde{b}_{2}\right|^{2},2\left|A(0)\right|\right\}$. Then,
\begin{multline}\label{area2}
\sum_{k:2^{-k-1}t \leqslant \max\left\{ \left|b_{2}-\widetilde{b}_{2}\right|^{2},2\left|A(0)\right|\right\} }2^{-k}\mathbb{P}\left(\left|A\left(T_{1}\right)\right|\geqslant 2^{-k-1}t\right)\\
\leqslant \sum_{k=k_0}^{\infty}2^{-k} = 2^{-k_0+1}
= \frac{4}{t} 2^{-k_0-1}t \leqslant  \frac{4}{t} \max\left\{ \left|b_{2}-\widetilde{b}_{2}\right|^{2},2\left|A(0)\right|\right\}\\
\leqslant  8 \left(\frac{\left|b_{2}-\widetilde{b}_{2}\right|^{2}}{t} + \frac{|A(0)|}{t}\right) \leqslant 8 \left(\frac{\left|b_{2}-\widetilde{b}_{2}\right|}{\sqrt{t}} + \frac{\left|a-\widetilde{a}+b_{1}\widetilde{b}_{2}-b_{2}\widetilde{b}_{1}\right|}{t}\right),
\end{multline}
where we used the facts that
$\frac{\left|b_{2}-\widetilde{b}_{2}\right|^2}{t}\leqslant \frac{\left|b_{2}-\widetilde{b}_{2}\right|}{\sqrt{t}}$
for $t\geqslant \left|b_{2}-\widetilde{b}_{2}\right|^{2}$ and $A(0) = a-\widetilde{a}+b_{1}\widetilde{b}_{2}-b_{2}\widetilde{b}_{1}$ to get the last inequality.

To estimate the second term on the right hand side of \eqref{area1}, we use Lemma \ref{lem:2} to get
\begin{multline}\label{area3}
\sum_{k:2^{-k-1}t>\max\left\{ \left|b_{2}-\widetilde{b}_{2}\right|^{2},2\left|A(0)\right|\right\} }2^{-k}\mathbb{P}\left(\left|A\left(T_{1}\right)\right|\geqslant 2^{-k-1}t\right)\\
\leqslant  \frac{C}{\sqrt{t}}\sum_{k:2^{-k-1}t>\max\left\{ \left|b_{2}-\widetilde{b}_{2}\right|^{2},2\left|A(0)\right|\right\} }2^{-k/2}\left|b_{2}-\widetilde{b}_{2}\right|\\
\leqslant  \frac{C\left|b_{2}-\widetilde{b}_{2}\right|}{\sqrt{t}} \sum_{k=0}^{\infty}2^{-k/2} \leqslant  C'\frac{\left|b_{2}-\widetilde{b}_{2}\right|}{\sqrt{t}}.
\end{multline}
Using \eqref{area2} and \eqref{area3} in \eqref{area1},
\begin{equation}\label{ar2}
\sum_{k=0}^{\infty}2^{-k}\mathbb{P}\left(\left|A\left(T_{1}\right)\right|\geqslant 2^{-k-1}t\right) \leqslant  C\left(\frac{\left|b_2-\widetilde{b}_2\right|}{\sqrt{t}}+\frac{\left|a-\widetilde{a}+b_{1}\widetilde{b}_{2}-b_{2}\widetilde{b}_{1}\right|}{t}\right).
\end{equation}
Using \eqref{ar1} and \eqref{ar2} in \eqref{ar}, we complete the proof of the lemma.
\end{proof}

Now, we state and prove our main theorem on coupling of Brownian motions on the Heisenberg group $\mathbb{H}^{3}$.

%\masha{I reformulated this theorem slightly. Please check.}
\begin{theorem}\label{thm:3} There exists a non-Markovian coupling $\left(\mathbf{X},\widetilde{\mathbf{X}}\right)$ of two Brownian motions on the Heisenberg group starting from $(b_{1}, b_{2},a)$
and $\left(\widetilde{b}_{1},\widetilde{b}_{2}, \widetilde{a}\right)$ respectively,
 and a constant $C>0$ which does not depend on the starting points such that the coupling time $\tau$ satisfies
\[
\mathbb{P}\left(\tau>t\right)\leqslant C\left(\frac{\left|\mathbf{b}-\widetilde{\mathbf{b}}\right|}{\sqrt{t}}+\frac{\left|a-\widetilde{a}+b_{1}\widetilde{b}_{2}-b_{2}\widetilde{b}_{1}\right|}{t}\right)
\]
for $t\geqslant \max\left\{ \left|\mathbf{b}-\widetilde{\mathbf{b}}\right|^{2},2\left|a-\widetilde{a}+b_{1}\widetilde{b}_{2}-b_{2}\widetilde{b}_{1}\right|\right\} $. Here $\mathbf{b}=\left(b_{1},b_{2}\right)$ and $\widetilde{\mathbf{b}}=\left(\widetilde{b}_{1}, \widetilde{b}_{2}\right)$.
\end{theorem}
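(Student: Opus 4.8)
The plan is to combine the three lemmas into a three-stage coupling. Since rotations of $\mathbb{R}^2$ preserve the law of planar Brownian motion and fix the Lévy stochastic area, I would first apply a rotation of coordinates to reduce to the case $\widetilde{b}_1 = b_1$; this does not change $|\mathbf{b}-\widetilde{\mathbf{b}}|$ nor the quantity $a-\widetilde{a}+b_1\widetilde{b}_2-b_2\widetilde{b}_1$ (the latter being rotation-invariant as the difference of stochastic areas), so it suffices to prove the bound for two Brownian motions on $\mathbb{H}^3$ started at $(b_1,b_2,a)$ and $(b_1,\widetilde{b}_2,\widetilde{a})$. \emph{Stage 1:} synchronously couple $B_1 = \widetilde{B}_1$ and reflection couple $B_2,\widetilde{B}_2$ until the time $T_1 = \inf\{t: B_2(t)=\widetilde{B}_2(t)\}$; by the reflection coupling of one-dimensional Brownian motion, $\mathbb{P}(T_1 > t) \leqslant C|b_2-\widetilde{b}_2|/\sqrt{t}$, and Lemma \ref{invcontrol} controls the size of the invariant difference of stochastic areas $A(T_1)$ at that time. \emph{Stage 2:} at time $T_1$ the horizontal parts $(B_1,B_2)$ and $(\widetilde{B}_1,\widetilde{B}_2)$ agree, so we are exactly in the setting of Lemma \ref{lem:1} with the role of ``$a-\widetilde{a}$'' played by $A(T_1)$ (using identity \eqref{invito}--\eqref{Azero} to see that the invariant difference evolves as $-2\int B_2 dB_1 + 2\int\widetilde{B}_2 dB_1$ once the horizontal parts are synchronously coupled); apply the Lemma \ref{lem:1} coupling, restarted at time $T_1$, to drive the invariant difference to zero. \emph{Stage 3:} once both the horizontal parts and the invariant difference vanish, all three coordinates of $\mathbf{X}$ and $\widetilde{\mathbf{X}}$ agree (here one checks from \eqref{e.BMHeisenberg} that equality of the horizontal components plus vanishing of the invariant difference forces equality of the third component), and we synchronously couple everything thereafter; this is the coupling time $\tau$.

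The main work is then the quantitative estimate. Write $\tau = T_1 + \tau'$ where, conditionally on $\mathcal{F}_{T_1}$, $\tau'$ is the Lemma \ref{lem:1} coupling time for a process whose initial area-gap is $|A(T_1)|$. For a threshold I would split on whether $t$ is ``large'' relative to $T_1$: on the event $\{T_1 \leqslant t/2\}$, the Lemma \ref{lem:1} bound gives $\mathbb{P}(\tau' > t/2 \mid \mathcal{F}_{T_1}) \leqslant C|A(T_1)|/(t/2) \wedge 1$, so
\[
\mathbb{P}(\tau > t) \leqslant \mathbb{P}(T_1 > t/2) + \mathbb{E}\Big[\frac{C|A(T_1)|}{t}\wedge 1\Big].
\]
The first term is $\leqslant C|b_2-\widetilde{b}_2|/\sqrt{t}$ by the reflection-coupling hitting estimate, and the second is exactly what Lemma \ref{invcontrol} bounds by $C(|b_2-\widetilde{b}_2|/\sqrt{t} + |a-\widetilde{a}+b_1\widetilde{b}_2-b_2\widetilde{b}_1|/t)$, valid for $t \geqslant \max\{|b_2-\widetilde{b}_2|^2, 2|a-\widetilde{a}+b_1\widetilde{b}_2-b_2\widetilde{b}_1|\}$. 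Undoing the rotation and noting $|b_2-\widetilde{b}_2| \leqslant |\mathbf{b}-\widetilde{\mathbf{b}}|$ (with equality after the rotation that aligns the first coordinates) gives the stated bound.

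The hardest point to get right is the interface between Stage 1 and Stage 2: one must verify that conditionally on $\mathcal{F}_{T_1}$ the future evolution is genuinely an instance of the Lemma \ref{lem:1} setup with area-gap $A(T_1)$, i.e.\ that after $T_1$ the horizontal coordinates can be synchronously coupled and the only remaining discrepancy is the (signed) invariant area difference, which then satisfies \eqref{invito}. This requires checking that the Lemma \ref{lem:1} construction only uses the sign of the initial area-gap through which Brownian bridge coefficient is reflected, and that conditioning on $\mathcal{F}_{T_1}$ (in particular on the already-run path of $B_1$ up to $T_1$) does not interfere with the independence structure the lemma needs on $[T_1,\infty)$ — which holds because the lemma's argument is carried out \emph{conditionally on the whole path of $B_1$} anyway. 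A secondary technical nuisance is handling the strong Markov property at the random time $T_1$ cleanly and absorbing the $\{T_1 > t/2\}$ contribution; this is routine once the decomposition above is in place. I expect no genuinely new idea is needed beyond assembling the lemmas, but the bookkeeping of the conditioning and of the ``$t$ large enough'' thresholds must be done carefully so that the final constant $C$ is genuinely independent of the starting points.
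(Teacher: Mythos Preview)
Your proposal is correct and follows essentially the same approach as the paper: reduce by rotation to $b_1=\widetilde b_1$, synchronously couple $B_1=\widetilde B_1$, reflection-couple $B_2,\widetilde B_2$ until $T_1$, then run the Lemma~\ref{lem:1} coupling with initial area-gap $A(T_1)$ and combine the reflection-coupling tail bound on $T_1$ with Lemma~\ref{invcontrol}. Your explicit split $\mathbb{P}(\tau>t)\le \mathbb{P}(T_1>t/2)+\mathbb{E}\big[\tfrac{C|A(T_1)|}{t}\wedge 1\big]$ just makes precise the step the paper records as ``equations \eqref{fund1} and \eqref{fund2} together yield the required tail bound.''
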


\begin{proof}
We will explicitly construct the non-Markovian coupling. In the proof, $C$ will denote a generic positive constant that does not depend
on the starting points.

Since the L\'{e}vy stochastic area is invariant under rotations of
coordinates, it suffices to consider the case when $b_{1}=\widetilde{b}_{1}$.
Recall the \emph{invariant difference of stochastic areas} $A$ defined by \eqref{align:inv}.
%\begin{align}\label{align:inv}
%A(t) & =
%\left(a-\widetilde{a}\right)
%+\left(\int_{0}^{t}B_{1}(s)dB_{2}(s)-\int_{0}^{t}B_{2}(s)dB_{1}(s)\right)
%\\
%&
%-\left(\int_{0}^{t}\widetilde{B}_{1}(s)d\widetilde{B}_{2}(s)-\int_{0}^{t}\widetilde{B}_{2}(s)d\widetilde{B}_{1}(s)\right)\notag
% + B_{1}(t)\widetilde{B}_{2}(t)-B_{2}(t)\widetilde{B}_{1}(t).
%\end{align}
We will synchronously couple the Brownian motions $B_{1}$ and $\widetilde{B}_{1}$ at all times. Recall that under this setup, the invariant difference takes the form \eqref{invito}.
%Now we synchronously couple the Brownian motions $B_{1}$ and $\widetilde{B}_{1}$ at all times. Thus, we have
%\[
%dA(t)=-2\left(B_{2}(t)-\widetilde{B}_{2}(t)\right)dB_{1}(t)
%\]
%for $t\geqslant 0$.
The coupling comprises the following two steps.

\emph{Step} 1. We use a reflection coupling for $B_{2}$ and $\widetilde{B}_{2}$ until
the first time they meet. Let $T_{1}=\inf\left\{ t\geqslant 0:B_{2}(t)=\widetilde{B}_{2}(t)\right\} .$

\emph{Step} 2.  After time $T_{1}$ we apply the coupling strategy described in Lemma $\ref{lem:1}$ to the diffusions

\begin{align*}
& \left\lbrace\left(B_1(t), B_2(t), A(T_1)+\int_{T_1}^tB_2(s)dB_1(s)\right): t \geqslant T_1\right\rbrace,
\\
& \left\lbrace\left(\widetilde{B}_1(t), \widetilde{B}_2(t), \int_{T_1}^t\widetilde{B}_2(s)d\widetilde{B}_1(s)\right): t \geqslant T_1\right\rbrace.
\end{align*}

By standard estimates for the Brownian hitting time we have
\begin{equation}\label{fund1}
\mathbb{P}\left(T_{1}>t\right)\leqslant \frac{C\left|b_{2}-\widetilde{b}_{2}\right|}{\sqrt{t}}
\end{equation}
for $t\geqslant \left|b_{2}-\widetilde{b}_{2}\right|^{2}$. By Lemma \ref{lem:1} and Lemma \ref{invcontrol}, for
$t\geqslant \max\left\{ \left|b_{2}-\widetilde{b}_{2}\right|^{2},2\left|A(0)\right|\right\} $,
\begin{multline}\label{fund2}
\mathbb{P}\left(\tau-T_{1}>t\right) \leqslant  C\mathbb{E}\left[\frac{\left|A\left(T_{1}\right)\right|}{t}\wedge1\right]\\
\leqslant C\left(\frac{\left|b_2-\widetilde{b}_2\right|}{\sqrt{t}}+\frac{\left|a-\widetilde{a}+b_{1}\widetilde{b}_{2}-b_{2}\widetilde{b}_{1}\right|}{t}\right).
\end{multline}
Equations \eqref{fund1} and \eqref{fund2} together yield the required tail bound on the coupling time probability stated in the theorem.
\end{proof}
An interesting observation to note from Theorem \ref{thm:3} is that, if the Brownian motions start from the same point, then the coupling rate is significantly faster.

The above coupling can be used to get sharp estimates on the total variation distance between the laws of two Brownian motions on the Heisenberg group starting from distinct points.
\begin{thm}\label{thm:TVD}
If $d_{TV}$ denotes the total variation distance between probability
measures, and $\mathcal{L}\left(\mathbf{X}_{t}\right),\mathcal{L}\left(\widetilde{\mathbf{X}}_{t}\right)$
denote the laws of Brownian motions on the Heisenberg group starting from $\left(b_{1},b_{2},a\right)$
and $\left(\widetilde{b}_{1},\widetilde{b}_{2},\widetilde{a}\right)$ respectively,
then there exists positive constants $C_1$, $C_2$ not depending on the starting points such that
\begin{align*}
d_{TV}\left(\mathcal{L}\left(\mathbf{X}_{t}\right),\mathcal{L}\left(\widetilde{\mathbf{X}}_{t}\right)\right)&\leqslant  C_1\left(\frac{\left|\mathbf{b}-\widetilde{\mathbf{b}}\right|}{\sqrt{t}}+\frac{\left|a-\widetilde{a}+b_{1}\widetilde{b}_{2}-b_{2}\widetilde{b}_{1}\right|}{t}\right)\\
d_{TV}\left(\mathcal{L}\left(\mathbf{X}_{t}\right),\mathcal{L}\left(\widetilde{\mathbf{X}}_{t}\right)\right) &\geqslant C_2\left(\frac{\left|\mathbf{b}-\widetilde{\mathbf{b}}\right|}{\sqrt{t}}\mathbbm{1}(\mathbf{b}\neq\widetilde{\mathbf{b}}) + \frac{\left|a-\widetilde{a}\right|}{t}\mathbbm{1}(\mathbf{b}=\widetilde{\mathbf{b}})\right)
\end{align*}
for $t\geqslant \max\left\{ \left|\mathbf{b}-\widetilde{\mathbf{b}}\right|^{2},2\left|a-\widetilde{a}+b_{1}\widetilde{b}_{2}-b_{2}\widetilde{b}_{1}\right|\right\} $.
\end{thm}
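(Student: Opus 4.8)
The plan is to obtain the upper bound directly from Aldous' inequality \eqref{equation:Aldous} applied to the non-Markovian coupling constructed in Theorem \ref{thm:3}: since $d_{TV}(\mathcal{L}(\mathbf{X}_t),\mathcal{L}(\widetilde{\mathbf{X}}_t)) \leqslant \mathbb{P}(\tau > t)$ for that coupling, the stated upper bound is immediate with $C_1 = C$ the constant from Theorem \ref{thm:3}. So the real content is the matching lower bound, and the strategy there is to exhibit, for each of the two regimes, a single test functional $f$ on $\mathbb{H}^3$ whose law under $\mathbf{X}_t$ and under $\widetilde{\mathbf{X}}_t$ is provably separated by the claimed amount, using $d_{TV}(\mu,\nu) \geqslant \sup_A |\mu(A) - \nu(A)| \geqslant \tfrac12 |\mathbb{E}_\mu f - \mathbb{E}_\nu f|$ for any $f$ with $\|f\|_\infty \leqslant 1$, or more directly a one-dimensional marginal comparison.

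For the case $\mathbf{b} \neq \widetilde{\mathbf{b}}$: project onto the first two coordinates. From \eqref{Heiseneq}, the horizontal part of $\mathbf{X}_t$ is $(b_1 + B_1(t), b_2 + B_2(t))$, a Gaussian vector with mean $\mathbf{b}$ and covariance $t\,\mathrm{Id}$, and similarly for $\widetilde{\mathbf{X}}_t$ with mean $\widetilde{\mathbf{b}}$. Since total variation distance cannot increase under the projection map $\mathbb{H}^3 \to \mathbb{R}^2$, we have $d_{TV}(\mathcal{L}(\mathbf{X}_t),\mathcal{L}(\widetilde{\mathbf{X}}_t)) \geqslant d_{TV}\!\left(N(\mathbf{b}, t\,\mathrm{Id}), N(\widetilde{\mathbf{b}}, t\,\mathrm{Id})\right)$, and the latter is a standard computation: it equals $2\Phi\!\left(|\mathbf{b}-\widetilde{\mathbf{b}}|/(2\sqrt{t})\right) - 1$, which is bounded below by $C_2 |\mathbf{b}-\widetilde{\mathbf{b}}|/\sqrt{t}$ on the range $t \geqslant |\mathbf{b}-\widetilde{\mathbf{b}}|^2$ (where the argument is at most $1/2$, so we are on the linear part of $\Phi$). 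For the case $\mathbf{b} = \widetilde{\mathbf{b}}$: here the horizontal projections have identical laws, so one must look at the third (vertical) coordinate. When $\mathbf{b} = \widetilde{\mathbf{b}}$, from \eqref{Heiseneq} we have $X_3(t) - \widetilde{X}_3(t) \buildrel d \over = a - \widetilde{a}$ plus terms built from the driving noises; the cleanest route is to compare the law of $X_3(t)$ — which by Brownian scaling is $a + t \cdot \mathcal{A}$ where $\mathcal{A}$ is the Lévy stochastic area at time $1$ (a fixed random variable with a known smooth bounded density, the density of $\mathrm{sech}$-type) — with the law of $\widetilde{X}_3(t) = \widetilde{a} + t\cdot\mathcal{A}$. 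These are translates of each other by $a - \widetilde{a}$ of a density that scales like $1/t$, so their total variation distance is, to leading order, $\tfrac12 \int |p_t(x - (a-\widetilde a)) - p_t(x)|\,dx \asymp |a-\widetilde a|/t$ when $|a - \widetilde a| \lesssim t$, giving the lower bound $C_2 |a-\widetilde{a}|/t$; one should be slightly careful that the conditional law of $X_3(t)$ given the horizontal endpoint also appears, but since both processes share the same horizontal law it suffices to note that $\mathbf{X}_t$ and $\widetilde{\mathbf{X}}_t$ differ in law exactly as the pushforward under the shear $z \mapsto z + (a - \widetilde a)$, so the TV distance is literally $d_{TV}(\mathcal{L}(Z), \mathcal{L}(Z + (a-\widetilde a)))$ for $Z$ the vertical coordinate started from $0$, and the $1/t$ scaling follows from the explicit $t$-scaling of that law.

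The main obstacle I anticipate is making the vertical lower bound ($\mathbf{b}=\widetilde{\mathbf{b}}$ case) fully rigorous: one needs a quantitative lower bound on $\int |p(x-\epsilon) - p(x)|\,dx$ that is genuinely linear in $\epsilon$ for small $\epsilon$, which requires knowing that the density $p$ of the stochastic area is not pathological — e.g. that it is continuously differentiable with $\int |p'| > 0$, so that $\int|p(x-\epsilon)-p(x)|\,dx = \epsilon \int|p'(x)|\,dx + o(\epsilon)$. This is true for the Lévy area (its density is explicit and analytic), but citing or establishing it cleanly is the one place where a little care is needed; everything else is either Aldous' inequality, the data-processing inequality for total variation under projections, or elementary Gaussian estimates.
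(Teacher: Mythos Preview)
Your approach matches the paper's: Aldous' inequality with Theorem \ref{thm:3} for the upper bound, projection onto the horizontal Gaussian marginals for the lower bound when $\mathbf{b}\neq\widetilde{\mathbf{b}}$, and translated L\'evy-area densities for $\mathbf{b}=\widetilde{\mathbf{b}}$. One point deserves care, however: the scaling $X_3(t)\stackrel{d}{=}a+t\,\mathcal{A}$ that you invoke holds only when $\mathbf{b}=0$. For general $\mathbf{b}=\widetilde{\mathbf{b}}$ the vertical coordinate carries the extra term $b_1 B_2(t)-b_2 B_1(t)$ and its density no longer scales purely as $1/t$, so projecting directly onto $z$ yields a density depending on $\mathbf{b}$, and the uniformity of $C_2$ in the starting points is not immediate. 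The paper resolves this by first using left-invariance of the heat kernel (your ``shear $z\mapsto z+(a-\widetilde a)$'' observation is precisely this, applied at the level of the full law) to reduce the total variation computation to starting points $(0,0,a)$ and $(0,0,\widetilde{a})$, and only then integrating out $x,y$ to land on the L\'evy area density $f_t(z)=t^{-1}f_1(z/t)$ with the explicit $f_1(z)=1/\cosh(\pi z)$. The mean-value lower bound $\int|f_1(z-\epsilon)-f_1(z)|\,dz\gtrsim\epsilon$ that you anticipate as the main obstacle is then exactly what the paper carries out, via a direct computation of $|f_1'|$ on $|z|\geqslant 1$.
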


\begin{proof}
The upper bound on the total variation distance follows from Theorem $\ref{thm:3}$ and the Aldous'
inequality \eqref{equation:Aldous}.

To prove the lower bound, we first address the case $\mathbf{b} \neq \widetilde{\mathbf{b}}$. It is straightforward to see from the definition of the total variation distance that
\[
d_{TV}\left(\mathcal{L}\left(\mathbf{X}_{t}\right),\mathcal{L}\left(\widetilde{\mathbf{X}}_{t}\right)\right) \geqslant d_{TV}\left(\mathcal{L}\left(\mathbf{B}_{t}\right),\mathcal{L}\left(\widetilde{\mathbf{B}}_{t}\right)\right).
\]
Thus, when $\mathbf{b} \neq \widetilde{\mathbf{b}}$, the lower bound in the theorem follows from the standard estimate on the total variation distance between the laws of Brownian motions using the reflection principle
\[
 d_{TV}\left(\mathcal{L}\left(\mathbf{B}_{t}\right),\mathcal{L}\left(\widetilde{\mathbf{B}}_{t}\right)\right) = \mathbb{P}\left(|N(0,1)| \leqslant \frac{\left|\mathbf{b}-\widetilde{\mathbf{b}}\right|}{2\sqrt{t}}\right)\geqslant \frac{1}{\sqrt{2\pi e}}\frac{\left|\mathbf{b}-\widetilde{\mathbf{b}}\right|}{\sqrt{t}}.
\]
where $N(0,1)$ denotes a standard Gaussian variable.

Now, we deal with the case $\mathbf{b} = \widetilde{\mathbf{b}}$. As the generator of Brownian motion on the Heisenberg group  is hypoelliptic, the law  of Brownian motion starting from $(u,v,w)$ has a density with respect to the Lebesgue measure on $\mathbb{R}^3$ which coincides with the Haar measure on $\mathbb{H}^{3}$. We denote by $p^{\left( u, v, w \right)}_t(\cdot, \cdot, \cdot)$ this density (the heat kernel)  at time $t$. The heat kernel  $p^{(u,v,w)}_{t}\left( x, y, z \right)$ is a symmetric function of $\left( (u,v,w), \left( x, y, z \right) \right) \in \mathbb{H}^{3}\times \mathbb{H}^{3}$ and is invariant under left multiplication, that is, $p_{t}^{\left( u, v, w \right)}\left( x, y, z \right) = p_t^{e}(\left( u, v, w \right)^{-1}\left( x, y, z \right))=p_t^{e}(\left( x, y, z \right)\left( u, v, w \right)^{-1})$. Using the fact that $\left( u, v, w \right)^{-1}=\left( -u, -v, -w \right)$ we see that

\begin{equation}\label{repTV}
p^{(u,v,w)}_t(x,y,z)= p^{e}_t(x-u,y-v, z - w -uy+ vx), \text{ where } e=(0,0,0).
\end{equation}

%Then, by a simple change of variable formula, we deduce
%\begin{equation}\label{repTV}
%p^{(u,v,w)}_t(x,y,z)= p^{(0,0,0)}_t(x-u,y-v, z - w -u(y-v)+ v(x-u)).
%\end{equation}
%To see this, we use an explicit representation \eqref{Heiseneq} of a Brownian motion on the %Heisenberg group $\mathbf{X} = (X_1, X_2, X_3)$ starting from $(u, v, w)$, which can be rewritten %in the following form
%\begin{align*}
%& X_{1}(t) - u  =  B^{\prime}_{1}(t),\\
%& X_{2}(t) - v  =  B^{\prime}_{2}(t),\\
%& X_{3}(t) - w - u(X_{2}(t) - v) + v(X_{1}(t) - u)  =  %\int_{0}^{t}B^{\prime}_{1}(s)dB^{\prime}_{2}(s)-\int_{0}^{t}B^{\prime}_{2}(s)dB^{\prime}_{1}(s),
%\end{align*}
%where $B^{\prime}_{1}, B^{\prime}_{2}$ are independent Brownian motions, each starting from zero. %The above representation gives the appropriate change of co-ordinates leading to \eqref{repTV}.
%Denoting the Lebesgue measure on $\mathbb{R}^3$ by $\lambda$,
Then
\begin{align*}
& d_{TV}\left(\mathcal{L}\left(\mathbf{X}_{t}\right),\mathcal{L}\left(\widetilde{\mathbf{X}}_{t}\right)\right) = \int_{\mathbb{R}^3}\left|p^{(b_1, b_2, a)}_t(x, y, z)- p^{(b_1, b_2, \widetilde{a})}_t(x,y,z)\right| dx dy dz
\\
& = \int_{\mathbb{R}^3}\left|p^{e}_t(x-b_1,y-b_2, z - a -b_1y+ b_2x)
\right.
\\
& \left.
- p^{e}_t(x-b_1,y-b_2, z- \widetilde{a} -b_1y+ b_2x)\right|  dx dy dz
\\
& = \int_{\mathbb{R}^3}\left|p^{e}_t(x,y, z - a)- p^{e}_t(x,y, z- \widetilde{a})\right| dx dy dz
\\
& \geqslant \int_{\mathbb{R}} \left|f_t(z-a)-f_t(z-\widetilde{a})\right|dz,
\end{align*}
where $f_t$ denotes the density with respect to the Lebesgue measure of the L\'{e}vy stochastic area at time $t$ when the driving Brownian motion starts at the origin. The third equality above follows by a simple change of variable formula and the last step follows from two applications of the inequality $\left|\int_{\mathbb{R}}f(x)dx\right| \leqslant \int_{\mathbb{R}}|f(x)|dx$ for real-valued measurable $f$.

From Brownian scaling, it is easy to see that
\[
f_t(z)=\frac{1}{t}f_1\left(\frac{z}{t}\right), \ \ z \in \mathbb{R}.
\]
Substituting this in the above and using the change of variable formula again, we get
\begin{align*}
d_{TV}\left(\mathcal{L}\left(\mathbf{X}_{t}\right),\mathcal{L}\left(\widetilde{\mathbf{X}}_{t}\right)\right) &\geqslant  \int_{\mathbb{R}} \left|f_1\left(z-\frac{a}{t}\right)-f_1\left(z-\frac{\widetilde{a}}{t}\right)\right|dz\\
&= \int_{\mathbb{R}} \left|f_1\left(z-\frac{a-\widetilde{a}}{t}\right)-f_1\left(z\right)\right|dz\\
&\geqslant \int_{|z| \geqslant 1} \left|f_1\left(z-\frac{a-\widetilde{a}}{t}\right)-f_1\left(z\right)\right|dz.
\end{align*}
The explicit form of $f_1$ is well-known (see, for example, \cite{Yor1991a} or \cite[p. 32]{NeuenschwanderLNM1996})
\[
f_1(z)=\frac{1}{\cosh \pi z}, \ \ z \in \mathbb{R}.
\]
Without loss of generality, we assume $a > \widetilde{a}$. By the mean value theorem and the assumption made in the theorem that $\frac{a-\widetilde{a}}{t} \leqslant \frac{1}{2}$,
\begin{align*}
\left|f_1\left(z-\frac{a-\widetilde{a}}{t}\right)-f_1\left(z\right)\right| &\geqslant \frac{a-\widetilde{a}}{t}\inf_{\zeta \in \left[z-\frac{a-\widetilde{a}}{t},z\right]}|f_1'(\zeta)|\\
&\geqslant \frac{a-\widetilde{a}}{t}\inf_{\zeta \in \left[z-\frac{1}{2},z\right]}|f_1'(\zeta)|.
\end{align*}
We can explicitly compute
\[
|f_1'(\zeta)| = \frac{2\pi |e^{\pi\zeta}- e^{-\pi\zeta}|}{(e^{\pi\zeta} + e^{-\pi\zeta})^2}.
\]
This is an even function which is strictly decreasing for $\zeta \geqslant 1/2$. Thus, for $|z| \geqslant 1$,
\[
\inf_{\zeta \in \left[z-\frac{1}{2}, z\right]}|f_1^{\prime}(\zeta)| \geqslant |f_1^{\prime}(3z/2)|.
\]
Thus,
\begin{align*}
d_{TV}\left(\mathcal{L}\left(\mathbf{X}_{t}\right),\mathcal{L}\left(\widetilde{\mathbf{X}}_{t}\right)\right)  & \geqslant \int_{|z| \geqslant 1} \left|f_1\left(z-\frac{a-\widetilde{a}}{t}\right)-f_1\left(z\right)\right|dz\\
& \geqslant \frac{|a-\widetilde{a}|}{t}\int_{|z| \geqslant 1}|f_1'(3z/2)|dz = C_2 \frac{|a-\widetilde{a}|}{t},
\end{align*}
which completes the proof of the theorem.
\end{proof}
Several remarks are in order.
\begin{rem}\label{rem:efficiency}
Theorem \ref{thm:TVD} shows that the non-Markovian coupling strategy we constructed is, in fact, an \textit{efficient coupling strategy} in the sense that the coupling rate decays according to the same power of $t$ as the total variation distance between the laws of the Brownian motions $\mathbf{X}$ and $\widetilde{\mathbf{X}}$. We refer to \cite[Definition 1]{BanerjeeKendall2016a} for the precise notion of efficiency.
\end{rem}

\begin{rem}\label{rem:constants}
Although we have stated our results without any quantitative bounds on the constants appearing in the coupling time and total variation estimates, it is possible to track concrete numerical bounds from the proofs presented above.
\end{rem}
We need the following elementary fact. For any $x\geqslant 0$ and $0\leqslant y \leqslant 1$
\begin{equation}\label{e.3.2}
x+y\leqslant \sqrt{2}\left(x^{2}+y\right)^{\frac{1}{2}}.
\end{equation}
Indeed,

\begin{equation*}
\left(x+y\right)^{2} \leqslant 2x^{2}+2y^{2} \leqslant 2\left(x^{2}+y\right),
\end{equation*}
since $y \leqslant 1$. This immediately gives us the following result.

\begin{prop}\label{prop:ccc}
Assume that $\left|a-\widetilde{a}+b_{1}\widetilde{b}_{2}-b_{2}\widetilde{b}_{1}\right|<1$. Then there exists a constant $C>0$ such that
\[
\mathbb{P}\left(\tau>t\right)\leqslant \frac{C}{\sqrt{t}}d_{CC}\left(\left(b_{1},b_{2},a\right),\left(\widetilde{b_{1},}\widetilde{b_{2}},\widetilde{a}\right)\right)
\]
for $t\geqslant \max\left\{ \left|\mathbf{b}-\widetilde{\mathbf{b}}\right|^{2}, 2\left|a-\widetilde{a}+b_{1}\widetilde{b}_{2}-b_{2}\widetilde{b}_{1}\right|, 1 \right\}$.

\end{prop}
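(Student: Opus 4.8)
The plan is to read off the estimate directly from Theorem~\ref{thm:3}, using the extra hypothesis $t\geqslant 1$ to collapse the two error terms into a single one, and then to convert the resulting Euclidean-type quantity into the Carnot--Carath\'eodory distance via the elementary inequality \eqref{e.3.2} together with the equivalence of $d_{CC}$ with the pseudo-metric $\rho$ of \eqref{e.2.1}.

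First I would apply Theorem~\ref{thm:3}, which under the stated range of $t$ furnishes a constant $C>0$, independent of the starting points, with
\[
\mathbb{P}\left(\tau>t\right)\leqslant C\left(\frac{\left|\mathbf{b}-\widetilde{\mathbf{b}}\right|}{\sqrt{t}}+\frac{\left|a-\widetilde{a}+b_{1}\widetilde{b}_{2}-b_{2}\widetilde{b}_{1}\right|}{t}\right).
\]
Since we now additionally assume $t\geqslant 1$, we have $t^{-1}\leqslant t^{-1/2}$, so the right-hand side is at most
\[
\frac{C}{\sqrt{t}}\left(\left|\mathbf{b}-\widetilde{\mathbf{b}}\right|+\left|a-\widetilde{a}+b_{1}\widetilde{b}_{2}-b_{2}\widetilde{b}_{1}\right|\right).
\]

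Next I would invoke \eqref{e.3.2} with $x=\left|\mathbf{b}-\widetilde{\mathbf{b}}\right|\geqslant 0$ and $y=\left|a-\widetilde{a}+b_{1}\widetilde{b}_{2}-b_{2}\widetilde{b}_{1}\right|$; the hypothesis $\left|a-\widetilde{a}+b_{1}\widetilde{b}_{2}-b_{2}\widetilde{b}_{1}\right|<1$ is exactly what guarantees $0\leqslant y\leqslant 1$, so \eqref{e.3.2} applies and gives
\[
\left|\mathbf{b}-\widetilde{\mathbf{b}}\right|+\left|a-\widetilde{a}+b_{1}\widetilde{b}_{2}-b_{2}\widetilde{b}_{1}\right|\leqslant \sqrt{2}\,\rho\left(\left(b_{1},b_{2},a\right),\left(\widetilde{b}_{1},\widetilde{b}_{2},\widetilde{a}\right)\right),
\]
since by \eqref{e.2.1} one has $\rho\left(\left(b_{1},b_{2},a\right),\left(\widetilde{b}_{1},\widetilde{b}_{2},\widetilde{a}\right)\right)^{2}=\left(b_{1}-\widetilde{b}_{1}\right)^{2}+\left(b_{2}-\widetilde{b}_{2}\right)^{2}+\left|a-\widetilde{a}+b_{1}\widetilde{b}_{2}-b_{2}\widetilde{b}_{1}\right|$. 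Finally, because $\nu$ is bounded above and below by positive constants on $[0,\pi)$, the pseudo-metric $\rho$ is equivalent to $d_{CC}$, hence $\rho\leqslant C'\,d_{CC}$ for some universal constant $C'$; chaining the last three displays produces the asserted bound with constant $\sqrt{2}\,CC'$.

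I do not expect a genuine obstacle here: the statement merely repackages Theorem~\ref{thm:3} in intrinsic sub-Riemannian language. The only points meriting care are verifying that the hypothesis on $\left|a-\widetilde{a}+b_{1}\widetilde{b}_{2}-b_{2}\widetilde{b}_{1}\right|$ is precisely the admissibility condition for the second slot of \eqref{e.3.2}, and noting that the inclusion of $t\geqslant 1$ in the admissible range is exactly what allows the $t^{-1}$ term to be absorbed into the $t^{-1/2}$ term (without it the conclusion would fail for small $t$).
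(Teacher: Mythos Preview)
Your proposal is correct and follows essentially the same argument as the paper: apply Theorem~\ref{thm:3}, use $t\geqslant 1$ to absorb $t^{-1}$ into $t^{-1/2}$, apply \eqref{e.3.2} to pass to $\rho$, and then invoke the equivalence of $\rho$ with $d_{CC}$. If anything, your write-up is slightly more careful in that you explicitly identify the hypothesis $\left|a-\widetilde{a}+b_{1}\widetilde{b}_{2}-b_{2}\widetilde{b}_{1}\right|<1$ as the condition $y\leqslant 1$ needed to apply \eqref{e.3.2}.
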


\begin{proof}
Since $t>1$, then $\frac{1}{t}\leqslant \frac{1}{\sqrt{t}}$, so by Theorem \ref{thm:3}
\begin{align*}
\mathbb{P}\left(\tau>t\right) & \leqslant  C\left(\frac{\left|\mathbf{b}- \widetilde{\mathbf{b}}\right|}{\sqrt{t}}+\frac{\left|a- \widetilde{a}+b_{1}\widetilde{b}_{2}-b_{2}\widetilde{b}_{1}\right|}{t}\right)
\\
 & \leqslant  \frac{C}{\sqrt{t}}\left(\left|\mathbf{b}-\widetilde{\mathbf{b}}\right|+\left|a-\widetilde{a}+b_{1}\widetilde{b}_{2}-b_{2}\widetilde{b}_{1}\right|\right)\\
 & \leqslant  \frac{C}{\sqrt{t}}\left(\left|\mathbf{b}-\widetilde{\mathbf{b}}\right|^{2}+\left|a-\widetilde{a}+b_{1}\widetilde{b}_{2}-b_{2}\widetilde{b}_{1}\right|\right)^{\frac{1}{2}}
\end{align*}
where we used \eqref{e.3.2} in the last inequality. Now we consider

\[
\rho\left(\left(b_{1}, b_{2}, a\right), \left(\widetilde{b_{1},}, \widetilde{b_{2}},\widetilde{a}\right)\right)=
\left(\left|\mathbf{b}-\widetilde{\mathbf{b}}\right|^{2} +\left|a-\widetilde{a}+b_{1}\widetilde{b}_{2}-b_{2}\widetilde{b}_{1}\right|\right)^{\frac{1}{2}},
\]
as defined by \eqref{e.2.1}. Recall from Section \ref{section:prelim} that this pseudo-metric is equivalent to the Carnot-Carath\'{e}odory distance $d_{CC}\left( \left(b_{1}, b_{2}, a\right), \left(\widetilde{b_{1}}, \widetilde{b_{2}}, \widetilde{a}\right) \right)$. This gives us the desired inequality.
\end{proof}

Liouville type theorems have been known for the Heisenberg group and other types of Carnot groups (e.g. \cite[Theorem 5.8.1]{BonfiglioliLanconelliUguzzoniBook}). Using the coupling we constructed, we derive a functional inequality (a form of which appeared as \cite[Equation (24)]{BakryBaudoinBonnefontChafai2008}) which consequently gives us the Liouville property rather easily.

In the following, for any bounded measurable function $u: \mathbb{H}^3 \rightarrow \mathbb{R}$ and any $x \in \mathbb{H}^3$, we define

\[
P_t u(x)=\mathbb{E}u\left(\mathbf{X}^x_{t}\right),
\]
where $\mathbf{X}^x$ is a Brownian motion on the Heisenberg group starting from $x$. By $\Vert \cdot \Vert_{\infty}$ we denote the sup norm. %We let $C_c^\infty(\mathbb{H}^3)$ denote the smooth and compactly supported functions on $\mathbb{H}^3$.
\begin{corollary}
For any bounded $u\in C^\infty(\mathbb{H}^3)$  there exists a positive constant $C$, which does not depend on $u$, such that for any $t  \geqslant 1$
\begin{align}\label{align:lio}
\Vert \nabla_{\mathcal{H}} P_t u \Vert_{\infty} \leqslant \frac{C}{\sqrt{t}}\Vert u \Vert_{\infty}.
\end{align}
Consequently, if $\Delta_{\mathcal{H}}u=0$, then $u$ is a constant.
\end{corollary}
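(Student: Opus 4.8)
The plan is to deduce the gradient estimate \eqref{align:lio} from the coupling rate bound in Proposition \ref{prop:ccc}, and then obtain the Liouville property as an immediate consequence. First I would recall the standard coupling argument linking gradient bounds to coupling rates: for any two points $x = (b_1,b_2,a)$ and $\widetilde{x} = (\widetilde{b}_1,\widetilde{b}_2,\widetilde{a})$ in $\mathbb{H}^3$, and the coupling $(\mathbf{X}^x,\mathbf{X}^{\widetilde{x}})$ of Theorem \ref{thm:3} with coupling time $\tau$, we have
\[
\left|P_t u(x) - P_t u(\widetilde{x})\right| = \left|\mathbb{E}\left[u(\mathbf{X}^x_t) - u(\mathbf{X}^{\widetilde{x}}_t)\right]\right| \leqslant 2\Vert u \Vert_\infty \, \mathbb{P}(\tau > t),
\]
since on the event $\{\tau \leqslant t\}$ the two processes have coupled and $u(\mathbf{X}^x_t) = u(\mathbf{X}^{\widetilde{x}}_t)$.

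Next I would feed in Proposition \ref{prop:ccc}: restricting attention to $\widetilde{x}$ close enough to $x$ so that $\left|a-\widetilde{a}+b_1\widetilde{b}_2 - b_2\widetilde{b}_1\right| < 1$ (which holds for all $\widetilde{x}$ in a small $d_{CC}$-neighborhood of $x$, by continuity of $d_{CC}$ and the pseudo-metric $\rho$), we get for $t \geqslant 1$ (and $t$ large enough relative to the — shrinking — distance, which is automatic once $\widetilde{x}$ is taken sufficiently close)
\[
\left|P_t u(x) - P_t u(\widetilde{x})\right| \leqslant \frac{2C}{\sqrt{t}}\Vert u\Vert_\infty \, d_{CC}(x,\widetilde{x}).
\]
Dividing by $d_{CC}(x,\widetilde{x})$ and taking $\limsup$ as $\widetilde{x} \to x$ along the definition \eqref{Grad} of $\left|\nabla_{\mathcal{H}} P_t u\right|(x)$ yields $\left|\nabla_{\mathcal{H}} P_t u\right|(x) \leqslant \frac{2C}{\sqrt{t}}\Vert u\Vert_\infty$; taking the supremum over $x \in \mathbb{H}^3$ gives \eqref{align:lio} with constant $2C$. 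One should note that the constant in Proposition \ref{prop:ccc} is uniform in the starting points, so this bound is genuinely uniform in $x$.

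For the Liouville statement, suppose $u \in C^\infty(\mathbb{H}^3)$ is bounded with $\Delta_{\mathcal{H}} u = 0$. Then $u$ is space-time harmonic for the heat semigroup, so $P_t u = u$ for all $t \geqslant 0$ (this follows since $\frac{d}{dt} P_t u = \frac{1}{2}\Delta_{\mathcal{H}} P_t u$ and, $u$ being bounded and smooth with $\Delta_{\mathcal{H}} u = 0$, $P_t u \equiv u$ — alternatively, $u(\mathbf{X}^x_t)$ is a bounded martingale). Hence \eqref{align:lio} gives $\Vert \nabla_{\mathcal{H}} u\Vert_\infty = \Vert \nabla_{\mathcal{H}} P_t u\Vert_\infty \leqslant \frac{C}{\sqrt{t}}\Vert u\Vert_\infty \to 0$ as $t \to \infty$, so $\nabla_{\mathcal{H}} u \equiv 0$. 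Since $\mathcal{X}$ and $\mathcal{Y}$ span the horizontal distribution and, by Hörmander's condition, together with their bracket $[\mathcal{X},\mathcal{Y}] = 2\mathcal{Z}$ they span the whole tangent space, $\mathcal{X} u = \mathcal{Y} u = 0$ forces $\mathcal{Z} u = 0$ as well, hence all partial derivatives of $u$ vanish and $u$ is constant (using connectedness of $\mathbb{H}^3$).

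The main obstacle is a bookkeeping one rather than a conceptual one: Proposition \ref{prop:ccc} requires $t \geqslant \max\{|\mathbf{b}-\widetilde{\mathbf{b}}|^2, 2|a-\widetilde{a}+b_1\widetilde{b}_2 - b_2\widetilde{b}_1|, 1\}$, so for a fixed $t \geqslant 1$ one must make sure the admissibility window is satisfied for all $\widetilde{x}$ near $x$. This is fine because as $\widetilde{x} \to x$ both $|\mathbf{b}-\widetilde{\mathbf{b}}|^2 \to 0$ and $|a-\widetilde{a}+b_1\widetilde{b}_2-b_2\widetilde{b}_1| \to 0$, so for any fixed $t \geqslant 1$ the constraint holds on a sufficiently small neighborhood of $x$ — exactly the regime relevant to the limit \eqref{Grad} defining $\left|\nabla_{\mathcal{H}} P_t u\right|(x)$. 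One should also be slightly careful that \eqref{Grad} uses $d_{CC}$ while the cleanest form of the coupling bound involves the pseudo-metric $\rho$ of \eqref{e.2.1}; their equivalence (noted in Section \ref{section:prelim}) absorbs this into the constant $C$.
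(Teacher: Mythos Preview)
Your proposal is correct and follows essentially the same approach as the paper: bound $|P_tu(x)-P_tu(\widetilde{x})|$ by $2\|u\|_\infty\mathbb{P}(\tau>t)$, apply Proposition~\ref{prop:ccc} for $\widetilde{x}$ close enough to $x$, divide by $d_{CC}(x,\widetilde{x})$ and take the supremum, then use $P_tu=u$ when $\Delta_{\mathcal{H}}u=0$ and let $t\to\infty$. The only cosmetic difference is that the paper cites \cite[Proposition 1.5.6]{BonfiglioliLanconelliUguzzoniBook} for the implication $\nabla_{\mathcal{H}}u\equiv 0\Rightarrow u$ constant, whereas you spell out the bracket argument $2\mathcal{Z}u=[\mathcal{X},\mathcal{Y}]u=0$ directly.
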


\begin{proof}
Fix $t \geqslant 1$. Take two distinct points $\left(b_{1}, b_{2},a\right)$ and $\left(\widetilde{b_{1}}, \widetilde{b_{2}}, \widetilde{a}\right)$ in $\left( \mathbb{H}^3, d_{CC} \right)$ sufficiently close to $\left(b_{1}, b_{2}, a\right)$ with respect to the distance $d_{CC}$ in such a way that
\[
\max\left\{ \left|\mathbf{b}-\widetilde{\mathbf{b}}\right|^{2},2\left|a-\widetilde{a}+b_{1}\widetilde{b}_{2}-b_{2}\widetilde{b}_{1}\right|\right\} \leqslant 1.
\]
Then, using the coupling $(\mathbf{X}, \widetilde{\mathbf{X}})$ constructed in Theorem \ref{thm:3} and by Proposition \ref{prop:ccc}, we get
%\max\left\{ \left|\mathbf{b}-\widetilde{\mathbf{b}}\right|^{2},2\left|a-\widetilde{a}+b_{1}\widetilde{b}_{2}-b_{2}\widetilde{b}_{1}\right|,1\right\} $,
\begin{align*}
\left|P_tu\left(b_{1}, b_{2}, a\right)-P_tu\left(\widetilde{b_{1}}, \widetilde{b_{2}}, \widetilde{a}\right)\right|&=
\left|\mathbb{E}\left(u\left(\mathbf{X}_{t}\right) - u\left(\widetilde{\mathbf{X}}_{t}\right):\tau>t\right)\right|
\\
\leqslant  2\left\Vert u\right\Vert _{\infty}\mathbb{P}\left(\tau>t\right)
& \leqslant \frac{2C}{\sqrt{t}}\left\Vert u\right\Vert _{\infty}d_{CC}\left(\left(b_{1}, b_{2},a\right),\left(\widetilde{b_{1},}\widetilde{b_{2}},\widetilde{a}\right)\right).
\end{align*}
Dividing by $d_{CC}\left(\left(b_{1},b_{2},a\right),\left(\widetilde{b_{1},}\widetilde{b_{2}},\widetilde{a}\right)\right)$ on both sides above and taking a supremum over all points $\left(\widetilde{b_{1},}\widetilde{b_{2}},\widetilde{a}\right)
\neq \left(b_{1}, b_{2}, a\right)$, we get \eqref{align:lio}.

Finally if $\Delta_{\mathcal{H}}u=0$, then $P_tu=u$ for all $t \geqslant 0$. Taking $t\to\infty$ in \eqref{align:lio}, we get $\nabla_{\mathcal{H}}u \equiv 0$ and hence $u\in C^\infty(\mathbb{H}^3)$ is constant by  \cite[Proposition 1.5.6]{BonfiglioliLanconelliUguzzoniBook}.

\end{proof}

\section{Gradient estimates}\label{section:Gradient estimates}

The goal of this section is to prove gradient estimates using the coupling construction introduced earlier. Let $x=(b_{1},b_{2},a)$ and $\widetilde{x}=(\widetilde{b_{1}},\widetilde{b_{2}}, \widetilde{a})$. We let $(\mathbf{X},\widetilde{\mathbf{X}})$ be the non-Markovian coupling of two Brownian motions $\mathbf{X}$ and $\widetilde{\mathbf{X}}$ on the Heisenberg group starting from $x$ and $\widetilde{x}$ respectively as described in Theorem \ref{thm:3}. For a set $Q$, define the exit time of a process $\mathbf{X}_{t}$ from this set by
\[
\tau_{Q}\left(\textbf{X}\right)=\inf\left\{ t>0:\mathbf{X}_{t}\notin Q\right\}.
\]
The oscillation of a function over a set $Q$ is defined by
\[
\operatorname{osc}_{Q}u\equiv\sup_{Q}u-\inf_{Q}u.
\]

Before we can formulate and prove the main results of this section, Theorems \ref{thm:4} and \ref{t.GradientEst}, we need two preliminary results. Lemma \ref{lemma:BDG} gives second moment estimates for $\sup_{t \leqslant \tau \wedge 1}|\int_0^t(B_2(s)-b_2)dB_1(s)|$, $\sup_{t \leqslant \tau \wedge 1} |B_1(t)-b_1|$ and $\sup_{t \leqslant \tau \wedge 1} |B_2(t)-b_2|$ under the coupling constructed above, when the coupled Brownian motions start from the same point $(b_1,b_2)$. It would be natural to want to apply here Burkholder-Davis-Gundy (BDG) inequalities such as \cite[p. 163]{KaratzasShreveBMBook}) which give sharp estimates of moments of $\sup_{t \leqslant T} |M_t|$ for any continuous local martingale $M$ in terms of the moments of its quadratic variation $\langle M \rangle_T$ when $T$ is a stopping time. But the coupling time $\tau$ is \textit{not a stopping time} with respect to the filtration generated by $(B_1, B_2)$, and therefore we \textit{can not apply} these inequalities to get the moment estimates.
\begin{lemma}\label{lemma:BDG}
Consider the coupling of the diffusions

\begin{align*}
\left\{ \left(B_{1}(t), B_{2}(t), a+\int_0^t B_{2}(s)dB_{1}(s)\right): t \geqslant 0 \right\}
\\ \left\lbrace\left( \widetilde{B}_{1}(t), \widetilde{B}_{2}(t), \widetilde{a}+\int_0^t\widetilde{B}_{2}(s)d\widetilde{B}_{1}(s)\right): t \geqslant 0\right\rbrace,
\end{align*}
described in Lemma \ref{lem:1}, with $B_1(0)=\widetilde{B}_1(0)=b_{1}$, $B_2(0)=\widetilde{B}_2(0)=b_2$ and $a >\widetilde{a}$, with coupling time $\tau$. Then there exists a positive constant $C$ not depending on $b_1, b_2, a, \widetilde{a}$ such that we have the following
\begin{itemize}
\item[(i)] $\mathbb{E}\left(\sup_{t \leqslant \tau \wedge 1} \left|\int_0^t (B_2(s)-b_2) d B_1(s)\right|\right)^2 \leqslant C\mathbb{E}(\tau \wedge 1)^2,$
\item[(ii)] $\mathbb{E}\left(\sup_{t \leqslant \tau \wedge 1} |B_1(t)-b_1|\right)^4 \leqslant C \mathbb{E}(\tau \wedge 1)^2,$
\item[(iii)] $\mathbb{E}\left(\sup_{t \leqslant \tau \wedge 1} |B_2(t)-b_2|\right)^4 \leqslant C \mathbb{E}(\tau \wedge 1)^2.$
\end{itemize}
\end{lemma}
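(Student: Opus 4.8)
The plan is to bypass the obstruction that $\tau$ is not a stopping time by working on the \emph{sub-intervals} $[2^n-1, 2^{n+1}-1]$ produced in the construction of Lemma \ref{lem:1}, on each of which the relevant Brownian motions \emph{are} adapted to an honest filtration, and then summing the contributions. First I would dispose of parts (ii) and (iii): since $B_2$ and $\widetilde B_2$ are reflection coupled until time $T_1$ and synchronously coupled thereafter (and $B_1=\widetilde B_1$ at all times), each of $B_1-b_1$ and $B_2-b_2$ is a genuine continuous martingale adapted to the filtration generated by the driving noise (the coupling of the $Z_k$'s uses only information available to this filtration, not the future). Hence for the deterministic time $1$ one may apply the ordinary $L^4$ maximal/BDG inequality to get $\mathbb E\big(\sup_{t\le 1}|B_i(t)-b_i|\big)^4 \le C$, which is certainly $\le C\,\mathbb E(\tau\wedge 1)^2$ up to absorbing a constant only if $\mathbb E(\tau\wedge1)^2$ is bounded below — so in fact I would instead run BDG against the random terminal time $\tau\wedge1$ by first restricting to $[0,T_1]$ (a stopping time, giving $\le C\,\mathbb E(T_1\wedge1)^2 \le C\,\mathbb E(\tau\wedge1)^2$) and noting that after $T_1$ the motions are synchronously coupled so $B_i(t)-b_i$ for $t\in[T_1,\tau\wedge1]$ contributes via $\sup_{t\le \tau\wedge1}|B_i(t)-B_i(T_1)|$; decomposing $\sup_{t\le\tau\wedge1}$ into $\sup_{t\le T_1\wedge1}$ plus the post-$T_1$ oscillation and applying BDG on each piece with the genuine stopping times involved gives the bound. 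The key point throughout is that $\{\tau\le t\}$ and the post-$T_1$ behaviour are measurable with respect to filtrations against which each individual increment process is a martingale.

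For part (i), write $\int_0^t(B_2(s)-b_2)dB_1(s)$ and split at $T_1$: on $[0,T_1]$ the integrand $B_2(s)-b_2$ together with $B_1$ is adapted, $T_1\wedge1$ is a stopping time, so BDG (or the $L^2$ isometry plus Doob) gives $\mathbb E\sup_{t\le T_1\wedge1}|\int_0^t(B_2-b_2)dB_1|^2 \le C\,\mathbb E\int_0^{T_1\wedge1}(B_2(s)-b_2)^2ds \le C\,\mathbb E\big[(T_1\wedge1)\sup_{s\le T_1\wedge1}(B_2(s)-b_2)^2\big]$, and then Cauchy--Schwarz together with part (iii) converts this into $C\,\mathbb E(\tau\wedge1)^2$. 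On $[T_1,\tau\wedge1]$ one uses the structure of the Lemma \ref{lem:1} coupling: there $B_2$ and $\widetilde B_2$ are synchronously coupled, and the increments of the stochastic integral over each dyadic block $[2^n-1,2^{n+1}-1]$ are controlled through the quantities $\lambda_n(\cdot)$ and $W^{(n)}$; I would bound $\sup_{t\le\tau\wedge1}|\int_{T_1}^t(B_2-b_2)dB_1|^2$ block-by-block, use that only finitely many blocks meet $[0,1]$, and again apply the isometry on each block (the block endpoints are deterministic, hence stopping times) followed by Cauchy--Schwarz with part (iii).

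The main obstacle I anticipate is precisely the one flagged in the statement: $\tau$ is not a stopping time, so one cannot simply quote BDG with terminal time $\tau\wedge1$. The resolution is organizational rather than deep — one must consistently express every supremum and every quadratic variation over the random horizon $\tau\wedge1$ as a \emph{deterministic or stopping-time} horizon (the fixed time $1$, the stopping time $T_1$, or the deterministic block endpoints $2^n-1$), apply the classical inequalities there, and only at the very end reintroduce $\tau$ via the trivial domination $T_1\wedge1 \le \tau\wedge1$ and (for the post-$T_1$ pieces) the fact that the coupled paths coincide with the marginal law of a Brownian motion run for time $\le \tau\wedge1$. A secondary technical nuisance is checking measurability of the coupled integrands with respect to the right filtration — this follows from the inductive construction in Lemma \ref{lem:1}, where at each stage the coupling of $Z_1^{(n)}$ uses $W^{(n)}$ and the stopping time $\sigma^{(n)}$, both adapted — but it must be stated carefully so that the stochastic integrals appearing are honest martingales and the isometry applies.
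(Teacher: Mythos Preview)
Your proposal rests on a misreading of the coupling that Lemma~\ref{lemma:BDG} concerns. The lemma is about the \emph{Lemma~\ref{lem:1}} coupling, where $B_2(0)=\widetilde B_2(0)=b_2$ already; there is no reflection phase and no time $T_1$ here (that two-step structure belongs to Theorem~\ref{thm:3}). Moreover, under the Lemma~\ref{lem:1} coupling the processes $B_2$ and $\widetilde B_2$ are \emph{not} synchronously coupled after time zero: on each block $[2^n-1,2^{n+1}-1]$ they differ precisely in the first Karhunen--Lo\`eve mode $Z_1^{(n)}$ versus $\widetilde Z_1^{(n)}$. So the decomposition ``up to $T_1$'' plus ``synchronous afterward'' collapses entirely. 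Your adaptedness claim also fails: the event $\{\sigma^{(n)}>2^n\}$ depends on $\lambda_n(2^{n+1}-1)=\tfrac{2}{\pi}\int_{2^n-1}^{2^{n+1}-1}\sqrt2\sin(\cdot)\,dB_1$, which looks at $B_1$ over the \emph{entire} block, and on the full path of $W^{(n)}$ on $[0,2^n]$, which is strictly richer than $Z_1^{(n)}=2^{-n/2}W^{(n)}(2^n)$ and hence not recoverable from $(B_1,B_2)$ alone. Thus $\tau$ is genuinely not a stopping time for any filtration built from the driving noise, and the ``block-by-block with honest stopping times'' plan cannot get off the ground. (Separately, on $[0,1]$ there is exactly one block, so the block decomposition gives no leverage toward the small quantity $\mathbb E(\tau\wedge1)^2$.)

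The paper's resolution is different in kind: it \emph{enlarges} the filtration so that $\tau$ becomes a stopping time while the relevant processes remain martingales. Concretely, one puts all of $\{W^{(n)}(s):n\ge0,\,s\ge0\}$ and $\{Z_k^{(n)}:n\ge0,\,k\ge2\}$ into $\mathcal F^*_0$ and adjoins only $\{B_1(s):s\le t\}$ at time $t$. Then $\{\tau>2^{n+1}-1\}=\bigcap_{m\le n}\{\sigma^{(m)}>2^m\}\in\mathcal F^*_{2^{n+1}-1}$ (so $\tau$ is an $\mathcal F^*$-stopping time), while $B_1$ is still an $\mathcal F^*$-Brownian motion because the added information is independent of $B_1$; hence $\int_0^t B_2(s)\,dB_1(s)$ is an $\mathcal F^*$-martingale and BDG applies directly with terminal time $\tau\wedge M$, yielding (i) and (ii). Part (iii) needs a further idea: $B_2$ is \emph{not} an $\mathcal F^*$-martingale, so one writes $B_2=Y_1+Y_2$ (first K--L mode versus the rest), introduces a second enlarged filtration $\mathcal F^{**}_t$ generated by $(B_1,W)$ up to time $t$ plus all $Z_k^{(n)}$, $k\ge2$, checks that $\tau$ is also an $\mathcal F^{**}$-stopping time, controls $Y_1$ via $W$ (an $\mathcal F^{**}$-martingale) and BDG, and handles $Y_2$ by noting it is \emph{independent} of $\tau$ so a conditioning argument with deterministic horizon suffices. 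This filtration-enlargement mechanism is the missing idea in your proposal.
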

\begin{proof}
In this proof, $C$ will denote a generic positive constant whose value does not depend on $b_1, b_2, a, \widetilde{a}$. Our basic strategy will be to find appropriate enlargements of the natural filtration generated by $(B_1, B_2)$ under which $\tau$ becomes a stopping time, and then use the Burkholder-Davis-Gundy inequality.

It suffices to prove the statement for $b_1=b_2=0$. Moreover, using scaling of Brownian motion, it is straightforward to check that it is sufficient to prove the statement with $a-\widetilde{a}=1$ and $\tau \wedge 1$ replaced by $\tau \wedge M$ (for arbitrary $M>0$). We write $B_2(t)= Y_1(t) + Y_2(t)$, where
\begin{align}\label{align:Yeq}
Y_1(t)&=\sum_{n=0}^{\infty} 2^{n/2}Z_1^{(n)} g_{n,1}((t-2^n+1)^+ \wedge 2^n)\nonumber\\
Y_2(t)&=\sum_{n=0}^{\infty}2^{n/2}\left(\frac{(t-2^n+1)^+ \wedge 2^n}{2^n}Z_0^{(n)} + \sum_{k=2}^{\infty}Z_k^{(n)} g_{n,k}((t-2^n+1)^+ \wedge 2^n)\right)
\end{align}
with $g_{n, k}(t)=g_{2^{n}, k}\left( t \right)
%(\sqrt{2}/(k\pi))\sin(k\pi t/2^n)
$ as defined in the Karhunen-Lo\`{e}ve expansion \eqref{eq:2} and $Z_0^{(n)}= 2^{-n/2}G^{(2^n)}$ for a a Gaussian variable with mean zero and variance $2^{n}$ as we used in \eqref{eq:1}.

Consider the filtration
\begin{align*}
\mathcal{F}^*_t = \sigma \left(\{B_1(s): s \leqslant t\} \cup \{W^{(n)}(s): n \geqslant 0, 0 \leqslant s \leqslant \infty\} \cup \{Z^{(n)}_k: n \geqslant 0, k \geqslant 2\} \right).
\end{align*}
We assume without loss of generality that $\{\mathcal{F}^*_t\}_{t \geqslant  0}$ is augmented, in the sense that all the null sets of $\mathcal{F}^*_{\infty}$ and their subsets lie in $\mathcal{F}^*_0$. We claim that $\tau$ is a stopping time under the above filtration. To see this, recall that by the definition of coupling time, the coupled processes must evolve together after the coupling time and thus, by the coupling construction given in Lemma \ref{lem:1} (in particular, see \eqref{levypart}),
\begin{equation}\label{taudiscrete}
\mathbb{P}[\tau \in \{2^{n+1}-1: n \geqslant  0\}]=1.
\end{equation}
Thus, to show that $\tau$ is a stopping time with respect to $\mathcal{F}^*_t $, it suffices to show that $\{\tau > 2^{n+1}-1\}$ is measurable with respect to $\mathcal{F}^*_{2^{n+1}-1}$ for each $n \geqslant  0$. This is because, for $t \in [2^{n+1}-1, 2^{n+2}-1)$ ($n \geqslant  0$),
\[
\{\tau > t\} = \{\tau > 2^{n+1}-1\}
\]
almost surely with respect to the coupling measure $\mathbb{P}$, by \eqref{taudiscrete}.
Note that for any $n \geqslant 0$,
\[
\{\tau > 2^{n+1}-1\} = \bigcap_{m=0}^n \{\sigma^{(m)} > 2^m\}.
\]
Recall that
\begin{align*}
\sigma^{(m)}& =\inf\left\{ t\geqslant 0:W^{(m)}(t)=
\right.
\\
& \left.
-\left(I(2^{m}-1)-\widetilde{I}(2^{m}-1)\right)\Bigg/\left(2\int_{2^{m}-1}^{2^{m+1}-1}g_{m,1}(s-2^m+1)dB_{1}(s)\right)\right\}
\end{align*}
and on the event $\{\tau > 2^{m+1}-1\}$,
\[
B_2(s) - \widetilde{B}_2(s)= Y_1(s) - \widetilde{Y}_1(s) = 2Y_1(s), \ \ \text{ for all } 0 \leqslant s \leqslant 2^{m+1}-1.
\]
As $\{Y_1(t): 0 \leqslant t \leqslant 2^{m+1}-1\}$ depends measurably on $\{Z_1^{(k)}: 0 \leqslant k \leqslant m\}$ and hence on $\{W^{(k)}(s): k \geqslant 0, 0 \leqslant s < \infty \}$, the above representation for $\sigma^{(m)}$ implies that the event $\{\sigma^{(m)} >2^m\}$ is measurable with respect to $\mathcal{F}^*_{2^{m+1}-1}$. Thus, for each $n \geqslant 0$, $\{\tau > 2^{n+1}-1\}$ is measurable with respect to $\mathcal{F}^*_{2^{n+1}-1}$ and hence, $\tau$ is indeed a stopping time with respect to $\{\mathcal{F}^*_t\}_{t \geqslant 0}$.

Also, note that $\left(\int_0^t B_2(s) dB_1(s)\right)_{t \geqslant 0}$ remains a continuous martingale under this enlarged filtration. Thus, by the Burkholder-Davis-Gundy inequality, we get
\begin{align*}
\mathbb{E}\left(\sup_{t \leqslant \tau \wedge M} \left| \int_0^t B_2(s) d B_1(s)\right|\right)^2 \leqslant C \mathbb{E}\left(\int_0^{\tau \wedge M}B_2^2(s)ds\right) \leqslant
\\
C \mathbb{E}\left(\left(\sup_{t \leqslant \tau \wedge M} |B_2(t)|\right)^2 (\tau \wedge M)\right)
\end{align*}
Now, by the Cauchy-Schwarz inequality
\[
 \mathbb{E}\left(\left(\sup_{t \leqslant \tau \wedge M} |B_2(t)|\right)^2 (\tau \wedge M)\right) \leqslant \left(\mathbb{E}\left(\sup_{t \leqslant \tau \wedge M} |B_2(t)|\right)^4 \right)^{1/2} \left(\mathbb{E} (\tau \wedge M)^2\right)^{1/2}.
\]
Thus, to complete the proof (i) and (iii), it suffices to show that
\[
\mathbb{E}\left(\sup_{t \leqslant \tau \wedge M} |B_2(t)|\right)^4 \leqslant C \mathbb{E} (\tau \wedge M)^2.
\]
To show this, define the Brownian motion
\[
W(t)=\sum_{n=0}^{\infty} W^{(n)}\left((t-2^n+1)^+ \wedge 2^n\right)
\]
and the following (augmented) filtration
\[
\mathcal{F}^{**}_t= \sigma\left(\{(B_1(s), W(s)): s \leqslant t\} \cup \{Z^{(n)}_k: n \geqslant 0, k \geqslant 2\} \right).
\]
Exactly as before, we can check that $\tau$ is a stopping time with respect to this new filtration and $W$ is a Brownian motion (hence a continuous martingale) under it. From the representation \eqref{align:Yeq}, note that
\begin{align*}
\sup_{t \leqslant \tau \wedge M} |Y_1(t)| = \frac{\sqrt{2}}{\pi} \sup_{n: 2^{n+1}-1 \leqslant \tau \wedge M} | W(2^{n+1}-1)-W(2^n-1)| \leqslant \frac{2\sqrt{2}}{\pi} \sup_{t \leqslant \tau \wedge M} |W(t)|.
\end{align*}
Thus, by the the Burkholder-Davis-Gundy inequality
\begin{align}\label{align:Yone}
\mathbb{E}\left(\sup_{t \leqslant \tau \wedge M} |Y_1(t)|\right)^4 \leqslant \frac{64}{\pi^4}\mathbb{E}\left(\sup_{t \leqslant \tau \wedge M} |W(t)|\right)^4 \leqslant C \mathbb{E}(\tau \wedge M)^2.
\end{align}
To estimate $\sup_{t \leqslant \tau \wedge M} |Y_2(t)|$, note that $Y_2$ and $\tau$ are independent. Thus, by a conditioning argument, it suffices to show that for fixed $T>0$,
\begin{align}\label{align:Ytwo}
\mathbb{E}\left(\sup_{t \leqslant T} |Y_2(t)|\right)^4 \leqslant CT^2.
\end{align}
To see this, observe that $Y_2(t)=B_2(t)-Y_1(t)$ for each $t \geqslant 0$ and thus
\[
\sup_{t \leqslant T}|Y_2(t)| \leqslant \sup_{t \leqslant T} |B_2(t)| + \sup_{t \leqslant T} |Y_1(t)|.
\]
Again by the the Burkholder-Davis-Gundy inequality
\[
\mathbb{E}\left(\sup_{t \leqslant T} |B_2(t)|\right)^4 \leqslant CT^2.
\]
By exactly the same argument as the one used to estimate the supremum of $Y_1$, but now applied to a fixed time $T$, we get
\[
\mathbb{E}\left(\sup_{t \leqslant T} |Y_1(t)|\right)^4 \leqslant CT^2.
\]
The two estimates above  yield \eqref{align:Ytwo}, and hence complete the proof of (i) and (iii).

Similarly, (ii) follows from the fact that $B_1$ is a Brownian motion under the filtration $\{\mathcal{F}^*_t\}_{t \geqslant 0}$ and the  Burkholder-Davis-Gundy inequality.
\end{proof}
The next lemma estimates $\mathbb{E}(\tau \wedge 1)^2$.
\begin{lemma}\label{lemma:momest}
Under the coupling of Lemma \ref{lem:1}, there exists a positive constant $C$ not depending on $b_1, b_2, a, \widetilde{a}$ such that

\[
\mathbb{E}(\tau \wedge 1)^2 \leqslant C(|a-\widetilde{a}| \wedge 1).
\]
\end{lemma}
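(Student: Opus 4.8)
The plan is to reduce the second moment of $\tau \wedge 1$ to an integral of the tail probability $\mathbb{P}(\tau > t)$ via the layer-cake formula, and then control that integral using the tail bound already established in Lemma \ref{lem:1}. Write $\delta = a - \widetilde{a} > 0$. I would first dispose of the easy regime $\delta \geqslant 1$: there $|a-\widetilde{a}| \wedge 1 = 1$ and trivially $\mathbb{E}(\tau \wedge 1)^2 \leqslant 1$, so the asserted inequality holds with $C = 1$. Hence it remains to treat $\delta < 1$.

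For $\delta < 1$, the starting point is the identity
\[
\mathbb{E}(\tau \wedge 1)^2 = \int_0^1 2t\, \mathbb{P}(\tau \wedge 1 > t)\, dt = 2\int_0^1 t\, \mathbb{P}(\tau > t)\, dt,
\]
which follows from Fubini's theorem together with the observation that $\{\tau \wedge 1 > t\} = \{\tau > t\}$ for $t < 1$. I would then split this integral at $t = \delta$. On the interval $[0,\delta]$ I use only the trivial bound $\mathbb{P}(\tau > t) \leqslant 1$, which contributes at most $\delta^2 \leqslant \delta$. On the interval $[\delta, 1]$, which is exactly the range where Lemma \ref{lem:1} is applicable (the lemma requires $t \geqslant a - \widetilde{a}$), I use $\mathbb{P}(\tau > t) \leqslant C\delta/t$, so this piece contributes at most $2C\delta \int_\delta^1 dt \leqslant 2C\delta$. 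Summing the two pieces gives $\mathbb{E}(\tau \wedge 1)^2 \leqslant (1 + 2C)\delta = (1+2C)(|a-\widetilde{a}| \wedge 1)$, which is the claim.

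There is no genuine obstacle here; the argument is a routine combination of the layer-cake representation with the tail estimate of Lemma \ref{lem:1}. The only points that require a little attention are (i) that Lemma \ref{lem:1} is valid only for $t \geqslant a - \widetilde{a}$, which is precisely why the integral is broken at $t = \delta$ rather than at $0$, and (ii) that the regime $\delta \geqslant 1$ must be handled separately, where the truncation at $1$ makes the desired bound immediate.
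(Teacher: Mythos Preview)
Your proof is correct and follows essentially the same approach as the paper: a layer-cake representation of $\mathbb{E}(\tau\wedge 1)^2$, a split of the integral at the scale $\delta=a-\widetilde{a}$, the trivial bound on the short piece, and Lemma~\ref{lem:1} on the long piece. The only cosmetic difference is that the paper writes $\mathbb{E}(\tau\wedge 1)^2=\int_0^1\mathbb{P}(\tau>\sqrt{t})\,dt$ and splits at $\delta^2$, whereas you use the equivalent form $2\int_0^1 t\,\mathbb{P}(\tau>t)\,dt$ and split at $\delta$; both lead to the same bound $(1+2C)\delta$.
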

\begin{proof}
Without loss of generality, we assume $|a-\widetilde{a}| \leqslant 1$. We can write
\begin{align*}
\mathbb{E}(\tau \wedge 1)^2 &= \int_0^1\mathbb{P}(\tau > \sqrt{t}) dt\\
&\leqslant |a-\widetilde{a}|^2 + \int_{|a-\widetilde{a}|^2}^1\mathbb{P}(\tau > \sqrt{t}) dt.
\end{align*}
From Lemma \ref{lem:1}, we get a constant $C$ that does not depend on $b_1, b_2, a, \widetilde{a}$ such that for $t > |a-\widetilde{a}|^2$,
\[
\mathbb{P}(\tau > \sqrt{t}) \leqslant C\frac{|a-\widetilde{a}|}{\sqrt{t}}.
\]
Using this we get
\begin{align*}
\mathbb{E}(\tau \wedge 1)^2 \leqslant |a-\widetilde{a}|^2 + C|a-\widetilde{a}|\int_0^1\frac{1}{\sqrt{t}}dt \leqslant (1+2C)|a-\widetilde{a}|,
\end{align*}
which proves the lemma.
\end{proof}

Let $D \subset \mathbb{H}^3$ be a domain. Later in Theorem \ref{t.GradientEst} we give gradient estimates for harmonic functions in $D$, but we start by a result on the coupling time $\tau$.
Define the Heisenberg ball of radius $r>0$ with respect to the distance $\rho$
\[
B(x, r)= \{y \in \mathbb{H}^3: \rho(x,y) < r\}.
\]
Recall that $\rho$ is the pseudo-metric equivalent to $d_{CC}$  defined by \eqref{e.2.1}. For $x\in D$, let $\delta_{x}=\rho\left(x, D^c\right)$.

Consider the coupling of two Brownian motions on the Heisenberg group $\mathbf{X}$ and $\widetilde{\mathbf{X}}$ starting from points $x, \widetilde{x} \in D$ respectively as described by Theorem \ref{thm:3}. We choose these points in such a way that $\rho(x,\widetilde{x})$ is small enough compared to $\delta_x$. The following theorem estimates the probability (as a function of $\delta_x$ and $\rho(x,\widetilde{x})$) that one of the processes exits the ball $B(x,\delta_x)$ before coupling happens. This turns out to be pivotal in proving the gradient estimate.

\begin{theorem}\label{thm:4}
Let $x=(b_1, b_2, a) \in D$, $\widetilde{x}=(\widetilde{b}_1, \widetilde{b}_2, \widetilde{a}) \in D$ such that $\rho(x, \widetilde{x}) < \delta_x/32$, $|\mathbf{b}-\widetilde{\mathbf{b}}| \leqslant 1$ and $|a-\widetilde{a} +b_1\widetilde{b}_2 -b_2\widetilde{b}_1|\leqslant 1/2$.
%\[
%Q=\left\{ \left(y_{1},y_{2},y_{3}\right)\in\mathbb{R}^{3} : \max_{i=1,2}\left|y_{i}-b_i\right|<\frac{\delta_{x}}{4},\left|a-y_{3}+b_1y_{2}-b_2y_{1}\right|<\frac{\delta_{x}^2}{4}\right\}
%\]
Then, under the same coupling of Theorem $\ref{thm:3}$, there exists a constant $C>0$ that does not depend on $x, \widetilde{x}$ such that
\[
\mathbb{P}\left(\tau>\tau_{B(x, \delta_x)}\left(\mathbf{X}\right)\wedge\widetilde{\tau}_{B(x, \delta_x)}\left(\widetilde{\mathbf{X}}\right)\right)\leqslant C\left(1+\frac{1}{\delta_x} + \frac{1}{\delta^4_x} + \frac{(1+\delta_x)^3}{\delta^4_x}\right)\rho(x,\widetilde{x}).
\]
\end{theorem}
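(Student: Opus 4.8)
The plan is to decompose according to whether coupling has occurred by the deterministic time $1$. Write $\tau_{\mathrm{ex}}=\tau_{B(x,\delta_x)}(\mathbf X)\wedge\widetilde\tau_{B(x,\delta_x)}(\widetilde{\mathbf X})$. Since $\{\tau>\tau_{\mathrm{ex}},\ \tau_{\mathrm{ex}}>1\}\subseteq\{\tau>1\}$, and since on $\{\tau>\tau_{\mathrm{ex}},\ \tau_{\mathrm{ex}}\leqslant1\}$ at least one of $\mathbf X,\widetilde{\mathbf X}$ leaves $B(x,\delta_x)$ by time $\tau\wedge1$, it suffices to bound $\mathbb P(\tau>1)$ and the two ``exit before coupling'' probabilities by $C(1+\delta_x^{-4})\rho(x,\widetilde x)$. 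The hypotheses $|\mathbf b-\widetilde{\mathbf b}|\leqslant1$ and $|a-\widetilde a+b_1\widetilde b_2-b_2\widetilde b_1|\leqslant\tfrac12$ put us in the range of validity of Theorem \ref{thm:3} at $t=1$, which gives $\mathbb P(\tau>1)\leqslant C(|\mathbf b-\widetilde{\mathbf b}|+|A(0)|)\leqslant C'\rho(x,\widetilde x)$, using $|A(0)|\leqslant\rho(x,\widetilde x)^2$ and $\rho(x,\widetilde x)<2$; this is the source of the constant ``$1$'' in the bound.

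For the exit terms, if $\mathbf X$ leaves $B(x,\delta_x)$ before $\tau\wedge1$ then $\sup_{t\leqslant\tau\wedge1}\rho(x,\mathbf X_t)\geqslant\delta_x$ by continuity of $t\mapsto\rho(x,\mathbf X_t)$; and since $\rho(\mathbf x,\mathbf y)=\|\mathbf y^{-1}\star\mathbf x\|$ with $\|(u,v,w)\|=(u^2+v^2+|w|)^{1/2}$ satisfies a quasi-triangle inequality $\rho(x,z)\leqslant2(\rho(x,y)+\rho(y,z))$, the hypothesis $\rho(x,\widetilde x)<\delta_x/32$ forces, on the corresponding event for $\widetilde{\mathbf X}$, that $\sup_{t\leqslant\tau\wedge1}\rho(\widetilde x,\widetilde{\mathbf X}_t)\geqslant\delta_x/4$. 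By Markov's inequality it is therefore enough to prove $\mathbb E\big[\sup_{t\leqslant\tau\wedge1}\rho(x,\mathbf X_t)^4\big]\leqslant C\rho(x,\widetilde x)$ and the same estimate with $(x,\mathbf X)$ replaced by $(\widetilde x,\widetilde{\mathbf X})$. Working in rotated coordinates so that $b_1=\widetilde b_1$ (which changes neither $\rho$-distances nor $\delta_x$), the group law gives $x^{-1}\star\mathbf X_t=(\beta_1(t),\beta_2(t),\int_0^t\beta_1\,d\beta_2-\int_0^t\beta_2\,d\beta_1)$ with $\beta_i=B_i-b_i$, and integration by parts rewrites this as $\rho(x,\mathbf X_t)^2=\beta_1(t)^2+\beta_2(t)^2+\big|\beta_1(t)\beta_2(t)-2\int_0^t\beta_2\,dB_1\big|$; hence $\rho(x,\mathbf X_t)^4\leqslant C\big(\beta_1(t)^4+\beta_2(t)^4+(\int_0^t\beta_2\,dB_1)^2\big)$, and everything reduces to bounding the expected suprema of these three processes over $[0,\tau\wedge1]$ by $C\,\mathbb E(\tau\wedge1)^2$, and then $\mathbb E(\tau\wedge1)^2\leqslant C\rho(x,\widetilde x)$.

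The decisive point is that $\tau$ is \emph{not} a stopping time for $(B_1,B_2)$, which is precisely why Lemmas \ref{lemma:BDG} and \ref{lemma:momest} are needed. I would split the time axis at $T_1=\inf\{t:B_2(t)=\widetilde B_2(t)\}$. On $[0,T_1]$ the reflection coupling makes $T_1$ a genuine stopping time for $(B_1,B_2)$, so the Burkholder--Davis--Gundy inequality applies directly and, together with the standard hitting-time estimate $\mathbb E(T_1\wedge1)^2\leqslant C|\mathbf b-\widetilde{\mathbf b}|$, controls the contribution of $[0,T_1\wedge1]$. On $[T_1,\tau]$, conditionally on $\mathcal F_{T_1}$ the construction is exactly a coupling of the type in Lemma \ref{lem:1}, started from $(\mathbf X_{T_1},\widetilde{\mathbf X}_{T_1})$ with area gap $A(T_1)$, so Lemmas \ref{lemma:BDG} and \ref{lemma:momest} apply conditionally and yield $\mathbb E[((\tau-T_1)\wedge1)^2\mid\mathcal F_{T_1}]\leqslant C(|A(T_1)|\wedge1)$ together with the fourth-moment bounds for $B_i(T_1+\cdot)-B_i(T_1)$ and $\int_{T_1}^{T_1+\cdot}(B_2-B_2(T_1))\,dB_1$; here one uses that $\mathbf X_{T_1}^{-1}\star\mathbf X_{T_1+\cdot}$ is again an $\mathbb H^3$-Brownian motion from $e$, so no dependence on $\mathbf X_{T_1}$ enters. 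Adding the $[0,T_1]$ and $[T_1,\tau]$ contributions of each of the three processes gives $\mathbb E[\sup_{t\leqslant\tau\wedge1}\rho(x,\mathbf X_t)^4]\leqslant C\,\mathbb E(\tau\wedge1)^2$, and finally $\mathbb E(\tau\wedge1)^2\leqslant C\big(\mathbb E(T_1\wedge1)^2+\mathbb E[((\tau-T_1)\wedge1)^2]\big)\leqslant C\big(|\mathbf b-\widetilde{\mathbf b}|+\mathbb E[|A(T_1)|\wedge1]\big)\leqslant C\rho(x,\widetilde x)$ by Lemma \ref{invcontrol}. For $\widetilde{\mathbf X}$ one argues identically: $\widetilde B_1=B_1$, one has $|\widetilde B_2(t)-\widetilde b_2|=|B_2(t)-b_2|$ on $[0,T_1]$, and on $\{t<\tau\}$ the identity $\widetilde Y_1=-Y_1$ used in the proof of Lemma \ref{lemma:BDG} gives $\widetilde B_2(T_1+s)-\widetilde B_2(T_1)=(B_2(T_1+s)-B_2(T_1))-2Y_1(s)$, so the fourth moments needed for $\widetilde{\mathbf X}$ follow from Lemma \ref{lemma:BDG} and the bound $\mathbb E(\sup_s|Y_1(s)|)^4\leqslant C\,\mathbb E((\tau-T_1)\wedge1)^2$ established there.

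I expect the main obstacle to be bookkeeping rather than a new idea: one must re-run, conditionally on $\mathcal F_{T_1}$, the enlarged-filtration constructions underlying Lemmas \ref{lemma:BDG} and \ref{lemma:momest} so that BDG is legitimate on $[T_1,\tau]$, and one must extend those estimates to the $\widetilde{\mathbf X}$-side, which the statements do not cover verbatim. I also note that the polynomial $1+\delta_x^{-1}+\delta_x^{-4}+(1+\delta_x)^3\delta_x^{-4}$ in the statement is not optimal — the scheme above actually yields a bound of order $(1+\delta_x^{-4})\rho(x,\widetilde x)$, which a fortiori implies the displayed inequality — and a cruder treatment of the post-$T_1$ exit (estimating displacements while keeping the base point $x$, so that $\rho(x,\mathbf X_{T_1})\leqslant\delta_x$ reappears in numerators) reproduces exactly the stated form.
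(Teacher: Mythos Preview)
Your approach is correct and genuinely different from the paper's. The paper works inside a ``Heisenberg cube'' $Q=Q_1\cap Q_2\subset B(x,\delta_x)$, separates the exit into a coordinate part ($Q_1$) and an area part ($Q_2$, tracked via an auxiliary process $U$), and then does a multi-stage case analysis: first whether $T_1$ beats the $Q_1^*$ exit, then whether the area has blown up by $T_1$, then the post-$T_1$ behaviour split further according to $\tau-T_1\lessgtr1$. The $(1+\delta_x)^3/\delta_x^4$ term arises precisely from bounding $\mathbb{E}\sup_{t\leqslant T_1\wedge\tau_{Q_1^*}}|U(t)|^2$ via a localized BDG in which $|B_i-\hat b_i|\leqslant\delta_x$ on $Q_1^*$. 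You bypass all of this by taking a single fourth moment of $\rho(x,\mathbf X_t)$ on $[0,\tau\wedge1]$, splitting at $T_1$, using BDG directly on $[0,T_1]$ (genuine stopping time) and Lemmas~\ref{lemma:BDG}--\ref{lemma:momest} conditionally on $[T_1,\tau]$; then one Markov inequality with $\delta_x^4$ handles the exit uniformly. This is cleaner and, as you note, actually yields the sharper factor $1+\delta_x^{-4}$, which a fortiori gives the stated bound. The price is that you must extend the conclusions of Lemma~\ref{lemma:BDG} to the $\widetilde{\mathbf X}$ side and handle the cross term $\beta_2(T_1)(B_1(t)-B_1(T_1))$ in the stochastic integral; both are routine (the first via $\widetilde B_2-B_2=-2Y_1$ and the $Y_1$ bound inside the proof of Lemma~\ref{lemma:BDG}, the second by Cauchy--Schwarz), but neither is stated verbatim in the paper, so your caveat about ``bookkeeping'' is accurate.
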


\begin{proof}
%It is routine to check that if $y \in Q$, then $\rho(x,y) < \delta_x$ and thus, $Q \subset D$.
In this proof, $C$ will denote a generic positive constant (whose value might change from line to line) that does not depend on $x, \widetilde{x}$.

Let $\hat{b}_i = \frac{b_i + \widetilde{b}_i}{2}$ for $i=1,2$ and $\hat{a} = \frac{a + \widetilde{a}}{2}$. We define the Heisenberg cube by
\[
Q=\left\{ \left(y_{1},y_{2},y_{3}\right)\in\mathbb{R}^{3} : \max_{i=1,2}\left|y_{i}-\hat{b}_i\right| \le\frac{\delta_{x}}{8},\left|\hat{a}-y_{3}+\hat{b}_1y_{2}-\hat{b}_2y_{1}\right| \leqslant \frac{\delta_{x}^2}{16}\right\}.
\]
Write $\hat{x}=(\hat{b}_1, \hat{b}_2, \hat{a})$. It is straightforward to check that $\rho(x, \hat{x}) \leqslant \rho(x, \widetilde{x})/\sqrt{2} < \delta_x/32\sqrt{2}$. Moreover, for $y \in Q$
\begin{align*}
\rho(\hat{x},y) & = \left(|y_1-\hat{b}_1|^2 + |y_2-\hat{b}_2|^2 + \left|\hat{a}-y_{3}+\hat{b}_1y_{2}-\hat{b}_2y_{1}\right|\right)^{1/2}\\
& \leqslant |y_1-\hat{b}_1| +|y_2-\hat{b}_2| + \left|\hat{a}-y_{3}+\hat{b}_1y_{2}-\hat{b}_2y_{1}\right|^{1/2} \leqslant \delta_x/2.
\end{align*}
Thus, by the triangle inequality, for any $y \in Q$
\[
\rho(x,y) \leqslant \rho(x,\hat{x}) + \rho(\hat{x},y) < \delta_x
\]
and hence, $Q \subset B(x, \delta_x)$.
Note that we can write $Q = Q_1 \cap Q_2$ where
\begin{eqnarray*}
Q_{1} & = & \left\{ \left(y_{1},y_{2}, y_3\right)\in\mathbb{R}^{3} : \max_{i=1,2}\left|y_{i}-\hat{b}_i\right| \le\frac{\delta_{x}}{8}\right\} ,\\
Q_{2} & = & \left\{ (y_1, y_2, y_3)\in\mathbb{R}^3 : \left|\hat{a}-y_{3}+\hat{b}_1y_{2}-\hat{b}_2y_{1}\right| \leqslant \frac{\delta_{x}^2}{16}\right\}.
\end{eqnarray*}
As the L\'{e}vy stochastic area is invariant under rotations of
coordinates, it suffices to assume that $b_{1}=\widetilde{b}_{1}$. We define
\[
U(t)= a-\hat{a} + \int_0^tB_1(s)dB_2(s) - \int_0^tB_2(s)dB_1(s) +B_1(t)\hat{b}_2 - B_2(t) \hat{b}_1.
\]
%and similarly $\widetilde{U}$ in terms of $(B_1, \widetilde{B}_2)$.
Note that
\[
d U(t)=(B_1(t)-\hat{b}_1)dB_2(t) - (B_2(t)-\hat{b}_2)dB_1(t).
\]
Writing
\[
\sigma_u=\inf\{t \geqslant 0: |U(t)| > u\},
\]
we observe that $\tau_{Q_2}(\mathbf{X}) = \sigma_{\delta_x^2/16}$ and hence, $\tau_{Q}\left(\mathbf{X}\right)=\tau_{Q_{1}}\left(\mathbf{X}\right)\wedge\tau_{Q_{2}}\left(\mathbf{X}\right) = \tau_{Q_{1}}\left(\mathbf{X}\right)\wedge\sigma_{\delta_x^2/16}$. We can write
\begin{align*}
\mathbb{P}\left(\tau>\tau_{B(x, \delta_x)}\left(\mathbf{X}\right)\wedge\widetilde{\tau}_{B(x, \delta_x)}\left(\widetilde{\mathbf{X}}\right)\right) &\leqslant \mathbb{P}(\tau>\tau_Q(\mathbf{X}) \wedge \tau_Q(\widetilde{\mathbf{X}}))\\
& \leqslant \mathbb{P}(\tau>\tau_Q(\mathbf{X})) +  \mathbb{P}(\tau>\tau_Q(\widetilde{\mathbf{X}})).
\end{align*}
Now we estimate $\mathbb{P}(\tau>\tau_Q(\mathbf{X}))$, the second term in the inequality above can be estimated similarly. First we define
\[
Q^*_{1}  = \left\{ \left(y_{1},y_{2}, y_3\right)\in\mathbb{R}^{3} : \max_{i=1,2}\left|y_{i}-\hat{b}_i\right| \leqslant \frac{\delta_{x}}{16}\right\}.
\]
We have
\begin{align}\label{align:exp}
\mathbb{P}(\tau>\tau_Q(\mathbf{X})) &= \mathbb{P}(\tau>\tau_{Q_1}(\mathbf{X}) \wedge \sigma_{\delta_x^2/16})\nonumber\\
& \leqslant \mathbb{P}(T_1 > \tau_{Q^*_1}(\mathbf{X})) + \mathbb{P}(\tau>\tau_{Q_1}(\mathbf{X}) \wedge \sigma_{\delta_x^2/16}, T_1 \leqslant \tau_{Q^*_1}(\mathbf{X}))\nonumber\\
&\leqslant \mathbb{P}(T_1 > \tau_{Q^*_1}(\mathbf{X})) + \mathbb{P}(\sigma_{\delta_x^2/32} \leqslant T_1 \wedge \tau_{Q^*_1}(\mathbf{X}))\nonumber\\
& \quad \quad + \mathbb{P}(\tau>\tau_{Q_1}(\mathbf{X}) \wedge \sigma_{\delta_x^2/16}, T_1 \leqslant \tau_{Q^*_1}(\mathbf{X}) \wedge \sigma_{\delta_x^2/32}).
\end{align}
It follows from a computation involving standard Brownian estimates (see, for example, the proof of \cite[Theorem 1]{Cranston1992a}) that
\begin{align}\label{align:e1}
\mathbb{P}(T_1 > \tau_{Q^*_1}(\mathbf{X})) \leqslant C\frac{|\mathbf{b}-\widetilde{\mathbf{b}}|}{\delta_x}.
\end{align}
To estimate the second term in \eqref{align:exp}, note that
\[
\mathbb{P}(\sigma_{\delta_x^2/32} \leqslant T_1 \wedge \tau_{Q^*_1}(\mathbf{X}))= \mathbb{P}\left(\sup_{t \leqslant T_1 \wedge \tau_{Q^*_1}(\mathbf{X})}|U(t)|> \frac{\delta_x^2}{32}\right).
\]
Now, as $T_1 \wedge \tau_{Q^*_1}(\mathbf{X})$ is a stopping time with respect to the natural filtration generated by $(B_1,B_2)$, by the the Burkholder-Davis-Gundy inequality
\begin{align*}
\mathbb{E}&\left(\sup_{t \leqslant T_1 \wedge \tau_{Q^*_1}(\mathbf{X})}|U(t)-U(0)|\right)^2\\
& \leqslant C \mathbb{E}\left(\int_0^{T_1 \wedge \tau_{Q^*_1}(\mathbf{X})}|\mathbf{B}(s)-\hat{\mathbf{b}}|^2ds\right)\\
&\leqslant C\mathbb{E}\left(\int_0^{T_1 \wedge \tau_{Q^*_1}(\mathbf{X})}\delta^2_xds\right)\\
&\leqslant C\delta^2_x\mathbb{E}(T_1 \wedge \tau_{Q^*_1}(\mathbf{X})).
%&\leqslant C\sqrt{\mathbb{E}\left(\sup_{t \leqslant T_1 \wedge \tau_{Q^*_1}(\mathbf{X})}|\mathbf{B}(t)-\mathbf{b}|^4 + |\mathbf{b}-\hat{\mathbf{b}}|^4\right)}\sqrt{\mathbb{E} \left(T_1 \wedge \tau_{Q^*_1}(\mathbf{X}))\right)^2}\\
%&\leqslant C\sqrt{\mathbb{E} \left(T_1 \wedge \tau_{Q^*_1}(\mathbf{X}))\right)^2 + |\mathbf{b}-\hat{\mathbf{b}}|^4}\sqrt{\mathbb{E} \left(T_1 \wedge \tau_{Q^*_1}(\mathbf{X}))\right)^2} \ \ \text{(by BDG inequality)}\\
%&\leqslant C\mathbb{E} \left(T_1 \wedge \tau_{Q^*_1}(\mathbf{X}))\right)^2 + |\mathbf{b}-\hat{\mathbf{b}}|^2\sqrt{\mathbb{E} \left(T_1 \wedge \tau_{Q^*_1}(\mathbf{X}))\right)^2}
\end{align*}
We can again appeal to standard Brownian estimates  (e.g. see the proof of \cite[Theorem 1]{Cranston1992a}) to see that
\begin{equation}\label{momstop}
\mathbb{E} \left(T_1 \wedge \tau_{Q^*_1}(\mathbf{X}))\right) \leqslant C \delta_x |\mathbf{b}-\hat{\mathbf{b}}|.
\end{equation}
Using this estimate  gives us
\begin{align*}
& \mathbb{E}\left(\sup_{t \leqslant T_1 \wedge \tau_{Q^*_1}(\mathbf{X})}|U(t)|\right)^2  \leqslant
2 \mathbb{E}\left(\sup_{t \leqslant T_1 \wedge \tau_{Q^*_1}(\mathbf{X})}|U(t)-U(0)|\right)^2 +2|U(0)|^2
\\
& \leqslant C\delta^3_x|\mathbf{b}-\hat{\mathbf{b}}| + 2|a-\hat{a} +b_1\hat{b}_2 -b_2\hat{b}_1|^2 \leqslant
 \frac{C}{2}\delta^3_x|\mathbf{b}-\widetilde{\mathbf{b}}| + \frac{1}{2}|a-\widetilde{a} +b_1\widetilde{b}_2 -b_2\widetilde{b}_1|^2.
\end{align*}
By assumption $|a-\widetilde{a} +b_1\widetilde{b}_2 -b_2\widetilde{b}_1| <1$, and  therefore
\begin{align*}
&\mathbb{E}\left(\sup_{t \leqslant T_1 \wedge \tau_{Q^*_1}(\mathbf{X})}|U(t)|\right)^2 \leqslant
C(1+\delta_x)^3(|\mathbf{b}-\widetilde{\mathbf{b}}| + |a-\widetilde{a} +b_1\widetilde{b}_2 -b_2\widetilde{b}_1|)
\\
& \leqslant C(1+\delta_x)^3\rho(x,\widetilde{x}),
\end{align*}
where the last inequality follows from \eqref{e.3.2}. Thus, by the Chebyshev inequality
\begin{align*}
\mathbb{P}\left(\sup_{t \leqslant T_1 \wedge \tau_{Q^*_1}(\mathbf{X})}|U(t)|> \frac{\delta_x^2}{32}\right) \leqslant C\frac{(1+\delta_x)^3}{\delta^4_x}\rho(x,\widetilde{x}),
\end{align*}
which, in turn, gives us
\begin{align}\label{align:e2}
\mathbb{P}(\sigma_{\delta_x^2/32} \leqslant T_1 \wedge \tau_{Q^*_1}(\mathbf{X})) \leqslant C\frac{(1+\delta_x)^3}{\delta^4_x}\rho(x,\widetilde{x}).
\end{align}
To estimate the last term in \eqref{align:exp}, we write
\begin{align}\label{align:f0}
& \mathbb{P}(\tau>\tau_{Q_1}(\mathbf{X}) \wedge \sigma_{\delta_x^2/16}, T_1 \leqslant \tau_{Q^*_1}(\mathbf{X}) \wedge \sigma_{\delta_x^2/32}) \leqslant \mathbb{P}(\tau-T_1>1)\notag\\
& \quad + \mathbb{P}(\tau>\tau_{Q_1}(\mathbf{X}) \wedge \sigma_{\delta_x^2/16}, T_1 \leqslant \tau_{Q^*_1}(\mathbf{X}) \wedge \sigma_{\delta_x^2/32}, \tau-T_1 \leqslant 1).
\end{align}
By Lemma \ref{lem:1}, we get
\[
\mathbb{P}(\tau - T_1>1) \leqslant C \mathbb{E}|A(T_1) \wedge 1|,
\]
where $A$ is the invariant difference of stochastic areas defined in \eqref{align:inv}.
%It is easy to see from the definition of $U$ that for $t \leqslant T_1$, $U(t) + \widetilde{U}(t)=0$.

Applying Lemma \ref{invcontrol} with $t=1$ and appealing to our assumption that $|\mathbf{b}-\widetilde{\mathbf{b}}| \leqslant 1$ and $|a-\widetilde{a} +b_1\widetilde{b}_2 -b_2b_1|\leqslant 1/2$, we have
\[
\mathbb{E}|A(T_1) \wedge 1| \leqslant C(|\mathbf{b}-\widetilde{\mathbf{b}}| + |a-\widetilde{a} +b_1\widetilde{b}_2 -b_2b_1|) \leqslant C \rho(x, \widetilde{x}).
\]
which gives
\begin{align}\label{align:f1}
\mathbb{P}(\tau - T_1>1) \leqslant C \rho(x, \widetilde{x}).
\end{align}
Finally, we need to estimate $\mathbb{P}(\tau>\tau_{Q_1}(\mathbf{X}) \wedge \sigma_{\delta_x^2/16}, T_1 \leqslant \tau_{Q^*_1}(\mathbf{X}) \wedge \sigma_{\delta_x^2/32}, \tau-T_1 \leqslant 1)$. Note that
\begin{align}\label{align:exp1}
\mathbb{P}(\tau>& \tau_{Q_1}(\mathbf{X}) \wedge \sigma_{\delta_x^2/16}, T_1 \leqslant \tau_{Q^*_1}(\mathbf{X}) \wedge \sigma_{\delta_x^2/32}, \tau-T_1 \leqslant 1)\notag
\\
& \leqslant \mathbb{P}\left(\sup_{T_1\leqslant t \leqslant T_1 +(\tau-T_1) \wedge 1}|B_1(t)-B_1(T_1)| \geqslant \delta_x/16\right) + \notag
\\
&
\mathbb{P}\left(\sup_{T_1\leqslant t \leqslant T_1 +(\tau-T_1) \wedge 1}|B_2(t)-B_2(T_1)| \geqslant \delta_x/16\right)\nonumber
\\
&\quad + \mathbb{P}\left(\sup_{T_1\leqslant t \leqslant T_1 +(\tau-T_1) \wedge 1} |U(t)-U(T_1)| \geqslant \delta^2_x/32,\right.\nonumber\\
&\qquad \qquad \left. \sup_{T_1\leqslant t \leqslant T_1 +(\tau-T_1) \wedge 1}|B_1(t)-B_1(T_1)| < \delta_x/16, T_1 \leqslant \tau_{Q^*_1}(\mathbf{X})\right).
\end{align}
By the strong Markov property applied at $T_1$, along with parts (ii) and (iii) of Lemma \ref{lemma:BDG} and the Chebyshev inequality, we get
\[
\mathbb{P}\left(\sup_{T_1\leqslant t \leqslant T_1 +(\tau-T_1) \wedge 1}|B_i(t)-B_i(T_1)| \geqslant \delta_x/16\right) \leqslant C\frac{\mathbb{E}((\tau-T_1) \wedge 1)^2}{\delta^4_x}
\]
for $i=1,2$. From the explicit construction of the coupling strategy given in Theorem \ref{thm:3} and Lemma \ref{lemma:momest} and Lemma \ref{invcontrol}, we obtain
\[
\mathbb{E}((\tau-T_1) \wedge 1)^2 \leqslant \mathbb{E}|A(T_1) \wedge 1| \leqslant C\rho(x,\widetilde{x}).
\]
and thus,
\begin{align}\label{align:f2}
\mathbb{P}\left(\sup_{T_1\leqslant t \leqslant T_1 +(\tau-T_1) \wedge 1}|B_i(t)-B_i(T_1)| \geqslant \delta_x/16\right) \leqslant C\frac{\rho(x,\widetilde{x})}{\delta^4_x}.
\end{align}
for $i=1,2$. To handle the last term in \eqref{align:exp1}, define
\[
U^*(t)=U(t)-(B_1(t)-\hat{b}_1)(B_2(t)-\hat{b}_2).
\]
Note that
\[
dU^*(t)=-2(B_2(t)-\hat{b}_2)dB_1(t).
\]
and $U^*(T_1)=U(T_1)$ as $B_2(T_1)=\hat{b}_2$. Further, observe that
\begin{align*}
& \sup_{T_1\leqslant t \leqslant T_1 +(\tau-T_1) \wedge 1}|U(t)-U(T_1)| \leqslant
\\
& \sup_{T_1 \leqslant t \leqslant T_1 +(\tau-T_1) \wedge 1}|U^*(t)-U^*(T_1)| + \sup_{T_1\leqslant t \leqslant T_1 +(\tau-T_1) \wedge 1}|B_1(t)-\hat{b}_1||B_2(t)-\hat{b}_2|.
\end{align*}
Using this, we can bound the last term in \eqref{align:exp1} as
\begin{multline}\label{lastbd}
\mathbb{P}\left(\sup_{T_1\leqslant t \leqslant T_1 +(\tau-T_1) \wedge 1} |U(t)-U(T_1)| \geqslant \delta^2_x/32,\right.\\
\left. \sup_{T_1\leqslant t \leqslant T_1 +(\tau-T_1) \wedge 1}|B_1(t)-B_1(T_1)| < \delta_x/16, T_1 \leqslant \tau_{Q^*_1}(\mathbf{X})\right)\\
\leqslant  \mathbb{P}\left(\sup_{T_1\leqslant t \leqslant T_1 +(\tau-T_1) \wedge 1} |U^*(t)-U^*(T_1)| \geqslant \delta^2_x/64\right)\\
\qquad \qquad + \mathbb{P}\left(\sup_{T_1\leqslant t \leqslant T_1 +(\tau-T_1) \wedge 1}|B_1(t)-\hat{b}_1||B_2(t)-\hat{b}_2| \geqslant \delta^2_x/64,\right.\\
\hspace{4cm} \left. \sup_{T_1\leqslant t \leqslant T_1 +(\tau-T_1) \wedge 1}|B_1(t)-B_1(T_1)| < \delta_x/16, T_1 \leqslant \tau_{Q^*_1}(\mathbf{X})\right).
\end{multline}
By conditioning at time $T_1$ and part (i) of Lemma \ref{lemma:BDG}, followed by applications of Lemma \ref{lemma:momest} and Lemma \ref{invcontrol}, we obtain
\begin{align*}
&\mathbb{E}\left(\sup_{T_1\leqslant t \leqslant T_1 +(\tau-T_1) \wedge 1}|U^*(t)-U^*(T_1)|\right)^2 \leqslant
\\
& 4\mathbb{E}\left(\sup_{T_1\leqslant t \leqslant T_1 +(\tau-T_1) \wedge 1}\left|\int_{T_1}^t(B_2(s)-\hat{b}_2)dB_1(s)\right|\right)^2 \leqslant
\\
& C\mathbb{E}((\tau-T_1) \wedge 1)^2\leqslant \mathbb{E}|A(T_1) \wedge 1| \leqslant C\rho(x,\widetilde{x}).
\end{align*}
Consequently, by the Chebyshev inequality
\begin{align}\label{lastbd1}
\mathbb{P}\left(\sup_{T_1\leqslant t \leqslant T_1 +(\tau-T_1) \wedge 1} |U^*(t)-U^*(T_1)| \geqslant \delta^2_x/64\right) \leqslant C\frac{\rho(x,\widetilde{x})}{\delta^4_x}.
\end{align}
Moreover,
\begin{multline}\label{prod1}
\mathbb{P}\left(\sup_{T_1\leqslant t \leqslant T_1 +(\tau-T_1) \wedge 1}|B_1(t)-\hat{b}_1||B_2(t)-\hat{b}_2| \geqslant \delta^2_x/64,\right.\\
\left. \sup_{T_1\leqslant t \leqslant T_1 +(\tau-T_1) \wedge 1}|B_1(t)-B_1(T_1)| < \delta_x/16, T_1 \leqslant \tau_{Q^*_1}(\mathbf{X})\right)\\
\leqslant \mathbb{P}\left(\sup_{T_1\leqslant t \leqslant T_1 +(\tau-T_1) \wedge 1}|B_2(t)-\hat{b}_2| \geqslant \delta_x/8\right)\\
 + \mathbb{P}\left(\sup_{T_1\leqslant t \leqslant T_1 +(\tau-T_1) \wedge 1}|B_1(t)-\hat{b}_1|\geqslant \delta_x/8,\right.\\
\hspace{4cm} \left. \sup_{T_1\leqslant t \leqslant T_1 +(\tau-T_1) \wedge 1}|B_1(t)-B_1(T_1)| < \delta_x/16, T_1 \leqslant \tau_{Q^*_1}(\mathbf{X})\right).
\end{multline}
We use the fact $B_2(T_1) = \hat{b}_2$ and proceed exactly along the lines of the proof of \eqref{align:f2} to obtain
\begin{equation}\label{inter1}
\mathbb{P}\left(\sup_{T_1\leqslant t \leqslant T_1 +(\tau-T_1) \wedge 1}|B_2(t)-\hat{b}_2| \geqslant \delta_x/8\right) \leqslant  C\frac{\rho(x,\widetilde{x})}{\delta^4_x}.
\end{equation}
The second probability appearing on the right hand side of \eqref{prod1} can be bounded as follows
\begin{multline}\label{inter2}
\mathbb{P}\left(\sup_{T_1\leqslant t \leqslant T_1 +(\tau-T_1) \wedge 1}|B_1(t)-\hat{b}_1|\geqslant \delta_x/8,\right.\\
\left. \sup_{T_1\leqslant t \leqslant T_1 +(\tau-T_1) \wedge 1}|B_1(t)-B_1(T_1)| < \delta_x/16, T_1 \leqslant \tau_{Q^*_1}(\mathbf{X})\right)\\
\leqslant  \mathbb{P}\left(\sup_{(T_1 \wedge  \tau_{Q^*_1}(\mathbf{X}))\leqslant t \leqslant (T_1 \wedge  \tau_{Q^*_1}(\mathbf{X})) +(\tau-(T_1 \wedge  \tau_{Q^*_1}(\mathbf{X}))) \wedge 1}|B_1(t)-\hat{b}_1|\geqslant \delta_x/8,\right.\\
\hspace{2cm}\left. \sup_{(T_1 \wedge  \tau_{Q^*_1}(\mathbf{X}))\leqslant t \leqslant (T_1 \wedge  \tau_{Q^*_1}(\mathbf{X})) +(\tau-(T_1 \wedge  \tau_{Q^*_1}(\mathbf{X}))) \wedge 1}|B_1(t)-B_1(T_1 \wedge  \tau_{Q^*_1}(\mathbf{X}))| < \delta_x/16\right)\\
\leqslant  \mathbb{P}\left(|B_1(T_1 \wedge  \tau_{Q^*_1}(\mathbf{X}))-\hat{b}_1| > \delta_x/16\right).
\end{multline}
We will use the fact that $b_1=\hat{b}_1$. By an application of the Chebyshev inequality followed by the  Burkholder-Davis-Gundy inequality, and using \eqref{momstop}, we get
\begin{multline}\nonumber
\mathbb{P}\left(|B_1(T_1 \wedge  \tau_{Q^*_1}(\mathbf{X}))-\hat{b}_1| > \delta_x/16\right) \leqslant  C\frac{\mathbb{E}|B_{1}(T_{1}\wedge\tau_{Q_{1}^{*}}(\mathbf{X}))-\hat{b}_{1}|^{2}}{\delta_{x}^{2}} \\
 \leqslant  C\frac{\mathbb{E}\sup_{0\leqslant t\leqslant T_{1}\wedge\tau_{Q_{1}^{*}}(\mathbf{X})}\left|B_{1}(t)-b_{1}\right|^{2}}{\delta_{x}^{2}}
\leqslant C\frac{\mathbb{E}(T_1 \wedge  \tau_{Q^*_1}(\mathbf{X}))}{\delta_x^2} \leqslant  C\frac{|\mathbf{b}-\hat{\mathbf{b}}|}{\delta_x}.
\end{multline}
Using this in \eqref{inter2},
 \begin{multline}\label{inter3}
\mathbb{P}\left(\sup_{T_1\leqslant t \leqslant T_1 +(\tau-T_1) \wedge 1}|B_1(t)-\hat{b}_1|\geqslant \delta_x/8,\right.\\
\hspace{2cm} \left. \sup_{T_1\leqslant t \leqslant T_1 +(\tau-T_1) \wedge 1}|B_1(t)-B_1(T_1)| < \delta_x/16, T_1 \leqslant \tau_{Q^*_1}(\mathbf{X})\right) \leqslant  C\frac{|\mathbf{b}-\hat{\mathbf{b}}|}{\delta_x}.
\end{multline}
Using \eqref{inter1} and \eqref{inter3} in \eqref{prod1}, we obtain
\begin{multline}\label{lastbd2}
\mathbb{P}\left(\sup_{T_1\leqslant t \leqslant T_1 +(\tau-T_1) \wedge 1}|B_1(t)-\hat{b}_1||B_2(t)-\hat{b}_2| \geqslant \delta^2_x/64,\right.\\
\left. \sup_{T_1\leqslant t \leqslant T_1 +(\tau-T_1) \wedge 1}|B_1(t)-B_1(T_1)| < \delta_x/16, T_1 \leqslant \tau_{Q^*_1}(\mathbf{X})\right)\\
\leqslant  C\left(\frac{1}{\delta_x} + \frac{1}{\delta^4_x}\right)\rho(x,\widetilde{x}).
\end{multline}
Finally, using \eqref{lastbd1} and \eqref{lastbd2} in \eqref{lastbd},
\begin{multline}\label{f3}
\mathbb{P}\left(\sup_{T_1\leqslant t \leqslant T_1 +(\tau-T_1) \wedge 1} |U(t)-U(T_1)| \geqslant \delta^2_x/32,\right.\\
\left. \sup_{T_1\leqslant t \leqslant T_1 +(\tau-T_1) \wedge 1}|B_1(t)-B_1(T_1)| < \delta_x/16, T_1 \leqslant \tau_{Q^*_1}(\mathbf{X})\right)\\
\leqslant C\left(\frac{1}{\delta_x} + \frac{1}{\delta^4_x}\right)\rho(x,\widetilde{x}).
\end{multline}
%Moreover,
%\begin{align*}
%\mathbb{E}&\left(\sup_{T_1\leqslant t \leqslant T_1 +(\tau-T_1) \wedge 1}|B_1(t)-\hat{b}_1||B_2(t)-\hat{b}_2|\right)^2\\
%&\leqslant \mathbb{E}\left(\sup_{T_1\leqslant t \leqslant T_1 +(\tau-T_1) \wedge 1}|B_1(t)-\hat{b}_1|\right)^2\left(\sup_{T_1\leqslant t \leqslant T_1 +(\tau-T_1) \wedge 1}|B_2(t)-\hat{b}_2|\right)^2\\
%&\leqslant  \left(\mathbb{E}\left(\sup_{T_1\leqslant t \leqslant T_1 +(\tau-T_1) \wedge 1}|B_1(t)-\hat{b}_1|\right)^4\right)^{1/2}\left(\mathbb{E}\left(\sup_{T_1\leqslant t \leqslant T_1 +(\tau-T_1) \wedge 1}|B_2(t)-\hat{b}_2|\right)^4\right)^{1/2}\\
%& \leqslant C\mathbb{E}((\tau-T_1) \wedge 1)^2 \leqslant \mathbb{E}|A(T_1) \wedge 1| \leqslant C\rho(x,\widetilde{x}).
%\end{align*}
%The third inequality above follows from the Cauchy-Schwarz inequality, and the fourth one follows from parts (ii) and (iii) of Lemma \ref{lemma:BDG}.

%Combining the above two estimates, we get
%\[
%\mathbb{E}\left(\sup_{T_1\leqslant t \leqslant T_1 +(\tau-T_1) \wedge 1}|U(t)-U(T_1)|\right)^2 \leqslant C\rho(x,\widetilde{x}).
%\]
%Consequently, by the Chebyshev inequality,
%\begin{align}\label{align:f3}
%\mathbb{P}\left(\sup_{T_1\leqslant t \leqslant T_1 +(\tau-T_1) \wedge 1} |U(t)-U(T_1)| \geqslant \delta^2_x/32\right) \leqslant C\frac{\rho(x,\widetilde{x})}{\delta^4_x}.
%\end{align}
Using the estimates from \eqref{align:f2} and \eqref{f3} in \eqref{align:exp1}, we get
\begin{multline}\label{f4}
\mathbb{P}(\tau> \tau_{Q_1}(\mathbf{X}) \wedge \sigma_{\delta_x^2/16}, T_1 \leqslant \tau_{Q^*_1}(\mathbf{X}) \wedge \sigma_{\delta_x^2/32}, \tau-T_1 \leqslant 1)\\
\leqslant C\left(\frac{1}{\delta_x} + \frac{1}{\delta^4_x}\right)\rho(x,\widetilde{x}).
\end{multline}
Using \eqref{align:f1} and \eqref{f4} in \eqref{align:f0}, we get
\begin{align}\label{align:e3}
\mathbb{P}(\tau>\tau_{Q_1}(\mathbf{X}) \wedge \sigma_{\delta_x^2/16}, T_1 \leqslant \tau_{Q^*_1}(\mathbf{X}) \wedge \sigma_{\delta_x^2/32}) \leqslant C\left(1 + \frac{1}{\delta_x} + \frac{1}{\delta^4_x}\right)\rho(x,\widetilde{x}).
\end{align}
Using the estimates \eqref{align:e1}, \eqref{align:e2} and \eqref{align:e3} in \eqref{align:exp}, we obtain
\begin{align}\label{align:finexp}
\mathbb{P}(\tau>\tau_Q(\mathbf{X})) \leqslant C\left(1+\frac{1}{\delta_x} + \frac{1}{\delta^4_x} + \frac{(1+\delta_x)^3}{\delta^4_x}\right)\rho(x,\widetilde{x}).
\end{align}
The same estimate for  $\mathbb{P}(\tau>\tau_Q(\widetilde{\mathbf{X}}))$ is obtained by interchanging the roles of $x$ and $\widetilde{x}$. This completes the proof of the theorem.
\end{proof}
The above theorem yields the gradient estimate formulated in Theorem \ref{t.GradientEst}. Before we can formulate our result, we explain the argument in the proof of \cite[Proposition 4.1]{Kuwada2010a} that leads to \eqref{gradineq1}.

Recall that $\Delta_{\mathcal{H}}$ denotes the sub-Laplacian which is the generator of the Brownian motion on $\mathbb{H}^3$, and for any function $f$ on $\mathbb{H}^3$, $\left|\nabla_{\mathcal{H}}f\right|$ denotes the associated length of the horizontal gradient of $f$  defined by \eqref{Grad}. As before $\left\Vert \cdot\right\Vert _{\mathcal{H}}$ denotes the norm induced by the sub-Riemannian metric on horizontal vectors. We can use the fact that $\left\{ \mathcal{X}, \mathcal{Y} \right\}$ is an orthonormal frame for the horizontal distribution, therefore for any Lipschitz continuous function $u$ defined on a domain $D$ in $\mathbb{H}^3$,
\[
\left\Vert \nabla_{\mathcal{H}}u\right\Vert_{\mathcal{H}}^{2}=\left(\mathcal{X}u\right)^{2}+\left(\mathcal{Y}u\right)^{2}
\]
holds in $D$ (where $\mathcal{X}u$ and $\mathcal{Y}u$ are interpreted in the distributional sense).
Now we can use \cite[Theorem 11.7]{HajlaszKoskela2000} for the vector fields $\left\{ \mathcal{X}, \mathcal{Y} \right\}$ in $\mathbb{H}^{3}$ identified with $\mathbb{R}^{3}$. We need to check some assumptions in this theorem. First, if $u$ is Lipschitz continuous on $\overline{D}$, it is clear that
\[
\left|\nabla_{\mathcal{H}}u\right|(x) \leqslant \sup_{z, \tilde{z} \in \overline{D}, z\neq\tilde{z}}\frac{\left|u(z)-u\left(\tilde{z}\right)\right|}{d_{CC}\left(z,\tilde{z}\right)}<\infty,
\]
 for all $x \in \overline{D}$, and hence $\left|\nabla_{\mathcal{H}}u\right|$ is locally integrable. In addition, as $u$ is Lipschitz continuous, $\left|\nabla_{\mathcal{H}}u\right|$
is an upper gradient of $u$ by \cite[Lemma 2.1]{Kuwada2010a}, so  \cite[Theorem 11.7]{HajlaszKoskela2000} is applicable and we have that
\begin{equation}\label{gradineq1}
\left\Vert \nabla_{\mathcal{H}}u\right\Vert_{\mathcal{H}}\leqslant\left|\nabla_{\mathcal{H}}u\right|,
\end{equation}
a.e. with respect to the Lebesgue measure.

Let $C\left(\overline{D}\right)$ be the space of functions that are continuous on the closure of the domain $D$. We also let $C^2\left(D\right)$ be the space of functions that are twice  continuously differentiable in $D$.

\begin{thm}\label{t.GradientEst}
Suppose $u\in C\left(\overline{D}\right)\cap C^2\left(D\right)$ such that $\Delta_{\mathcal{H}} u=0$ on $D\subset\mathbb{H}^3$. Fix any constant $\alpha \in (0,1]$. There exists a constant
$C>0$ that does not depend on $u$ such that for every $x \in D$
\begin{equation}\label{gradineq2}
\left\Vert \nabla_{\mathcal{H}}u(x)\right\Vert_{\mathcal{H}} \leqslant \left|\nabla_{\mathcal{H}}u\right|(x)\leqslant C\left(1+\frac{1}{\delta_x} + \frac{1}{\delta^4_x} + \frac{(1+\delta_x)^3}{\delta^4_x}\right)\operatorname{osc}_{B(x,\alpha\delta_x)}u.
\end{equation}
\end{thm}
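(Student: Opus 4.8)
The plan is to follow the classical coupling route to gradient estimates (cf.\ \cite{Cranston1991, Cranston1992a}), now driven by the exit-before-coupling bound of Theorem \ref{thm:4}. I will express the difference of the values of $u$ at two nearby points as the expectation of the difference of $u$ along the coupled Brownian motions stopped when one of them leaves a suitable Heisenberg ball, observe that this difference vanishes once the two processes have coalesced, and then invoke Theorem \ref{thm:4} to bound the probability of the complementary event. For the left inequality in \eqref{gradineq2} there is nothing probabilistic to do: $u$ is $C^\infty$ by hypoellipticity of $\Delta_{\mathcal H}$, and approaching $x$ along the integral curve of a left-invariant horizontal unit vector field pointing in the direction of $\nabla_{\mathcal H}u(x)$ (a curve whose $d_{CC}$-length is at most its parameter length) gives $\|\nabla_{\mathcal H}u(x)\|_{\mathcal H}\leqslant|\nabla_{\mathcal H}u|(x)$ directly from \eqref{Grad}. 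So everything reduces to bounding $|\nabla_{\mathcal H}u|(x)$.

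I fix $x=(b_1,b_2,a)\in D$ and $\alpha\in(0,1]$ and set $\mathcal D:=B(x,\alpha\delta_x)$. Since the Heisenberg ball is defined through the pseudo-metric $\rho$, one has $\rho(x,\mathcal D^c)=\alpha\delta_x$, so applying Theorem \ref{thm:4} with $D$ replaced by $\mathcal D$ means the role of ``$\delta_x$'' there is played by $\alpha\delta_x$. For $\widetilde x=(\widetilde b_1,\widetilde b_2,\widetilde a)$ with $\rho(x,\widetilde x)<\alpha\delta_x/32$, $|\mathbf b-\widetilde{\mathbf b}|\leqslant 1$ and $|a-\widetilde a+b_1\widetilde b_2-b_2\widetilde b_1|\leqslant 1/2$ --- all of which hold once $\widetilde x$ is $\rho$-close enough to $x$, since $a-\widetilde a+b_1\widetilde b_2-b_2\widetilde b_1\to 0$ --- I take the coupling $(\mathbf X,\widetilde{\mathbf X})$ of Theorem \ref{thm:3} started from $x$ and $\widetilde x$, with coupling time $\tau$, and put $\tau_1=\tau_{\mathcal D}(\mathbf X)$, $\widetilde\tau_1=\tau_{\mathcal D}(\widetilde{\mathbf X})$ and $T^*=\tau_1\wedge\widetilde\tau_1$. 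Combining Theorem \ref{thm:4} with the elementary inequalities $(\alpha\delta_x)^{-1}\leqslant\alpha^{-1}\delta_x^{-1}$, $(\alpha\delta_x)^{-4}\leqslant\alpha^{-4}\delta_x^{-4}$ and $(1+\alpha\delta_x)^{3}(\alpha\delta_x)^{-4}\leqslant\alpha^{-4}(1+\delta_x)^{3}\delta_x^{-4}$ (valid as $\alpha\leqslant 1$) produces a constant $C$ depending only on $\alpha$ with
\[
\mathbb P\!\left(\tau>T^*\right)\leqslant C\left(1+\frac1{\delta_x}+\frac1{\delta_x^{4}}+\frac{(1+\delta_x)^{3}}{\delta_x^{4}}\right)\rho(x,\widetilde x).
\]

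Next, as $u\in C^2(D)\cap C(\overline D)$ and $\overline{\mathcal D}$ is a compact subset of $\overline D$, $u$ is bounded on $\overline{\mathcal D}$; since $\Delta_{\mathcal H}u=0$, It\^{o}'s formula shows that $u(\mathbf X_{t\wedge\tau_1})$ and $u(\widetilde{\mathbf X}_{t\wedge\widetilde\tau_1})$ are bounded martingales (with a routine localization near $\partial\mathcal D$ when $\alpha=1$), and because $\mathcal D$ is bounded the exit times are a.s.\ finite, so optional stopping and bounded convergence give $u(x)=\mathbb E[u(\mathbf X_{\tau_1})]$ and $u(\widetilde x)=\mathbb E[u(\widetilde{\mathbf X}_{\widetilde\tau_1})]$. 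The cancellation is that on $\{\tau\leqslant T^*\}$ the two processes have coalesced before either left $\mathcal D$, hence they exit $\mathcal D$ at the same instant and at the same point and $u(\mathbf X_{\tau_1})=u(\widetilde{\mathbf X}_{\widetilde\tau_1})$ there, while on $\{\tau>T^*\}$ both stopped positions lie in $\overline{\mathcal D}$, so by continuity of $u$ their $u$-values differ by at most $\operatorname{osc}_{B(x,\alpha\delta_x)}u$. Hence
\[
|u(x)-u(\widetilde x)|=\left|\mathbb E\!\left[\bigl(u(\mathbf X_{\tau_1})-u(\widetilde{\mathbf X}_{\widetilde\tau_1})\bigr)\mathbbm{1}_{\{\tau>T^*\}}\right]\right|\leqslant\operatorname{osc}_{B(x,\alpha\delta_x)}u\cdot\mathbb P\!\left(\tau>T^*\right).
\]

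Finally I insert the bound on $\mathbb P(\tau>T^*)$, divide by $d_{CC}(x,\widetilde x)$, and use that $d_{CC}$ is equivalent to $\rho$ so that $\rho(x,\widetilde x)/d_{CC}(x,\widetilde x)$ stays bounded; taking the supremum over $\widetilde x$ in a small punctured $d_{CC}$-ball about $x$ and letting its radius tend to $0$ yields, via \eqref{Grad},
\[
|\nabla_{\mathcal H}u|(x)\leqslant C\left(1+\frac1{\delta_x}+\frac1{\delta_x^{4}}+\frac{(1+\delta_x)^{3}}{\delta_x^{4}}\right)\operatorname{osc}_{B(x,\alpha\delta_x)}u
\]
with $C$ depending only on $\alpha$, which together with the left inequality is \eqref{gradineq2}. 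I expect the only real subtlety to be bookkeeping rather than conceptual: one must check that the hypotheses of Theorem \ref{thm:4} persist as $\widetilde x\to x$ (they do) and, more to the point, that the constant it outputs transforms correctly under the rescaling $\delta_x\mapsto\alpha\delta_x$ so as to collapse into a single constant depending on $\alpha$ alone. The martingale-stopping step is routine, so the substance of the estimate sits entirely in Theorem \ref{thm:4}, and hence ultimately in the coupling of Theorem \ref{thm:3}.
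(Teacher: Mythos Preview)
Your proof is correct and follows essentially the same route as the paper: apply Theorem \ref{thm:4} to the ball $B(x,\alpha\delta_x)$, use It\^{o}/optional stopping to write $u(x)-u(\widetilde{x})$ as the expectation of $u(\mathbf{X}_{\tau_1})-u(\widetilde{\mathbf{X}}_{\widetilde{\tau}_1})$, note the cancellation on $\{\tau\leqslant T^*\}$, and divide by $d_{CC}(x,\widetilde{x})$ using $\rho\asymp d_{CC}$. The only cosmetic differences are that the paper first reduces to $\alpha=1$ (whereas you carry $\alpha$ through and absorb it via the inequalities $(\alpha\delta_x)^{-k}\leqslant\alpha^{-k}\delta_x^{-k}$), and for the left inequality the paper cites the pre-established bound \eqref{gradineq1} rather than your direct integral-curve argument.
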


\begin{proof}

It clearly suffices to consider the case $\alpha=1$. Since $u$ is continuous on $\overline{D}$, $\operatorname{osc}_{B(x,\delta_x)}u<\infty$. Let $x=(b_1, b_2, a) \in D$, $\widetilde{x}=(\widetilde{b}_1, \widetilde{b}_2, \widetilde{a}) \in D$ such that $\rho(x, \widetilde{x}) < \delta_x/32$, $|\mathbf{b}-\widetilde{\mathbf{b}}| \leqslant 1$ and $|a-\widetilde{a} + b_1\widetilde{b}_2 - b_2\widetilde{b}_1|\leqslant 1/2$. Consider the coupling from Theorem $\ref{thm:3}$ of two Brownian motions, $\textbf{X}$ and $\widetilde{\textbf{X}}$, on the Heisenberg group  starting from the points $x$ and $\tilde{x}$ respectively.

By Theorem \ref{thm:4} and the equivalence of the  Carnot-Carath\'{e}odory metric $d_{CC}$ and the pseudo-metric $\rho$, we have
\begin{align*}
\mathbb{P}\left(\tau>\tau_{B(x, \delta_x)}\left(\mathbf{X}\right)\wedge\widetilde{\tau}_{B(x, \delta_x)}\left(\widetilde{\mathbf{X}}\right)\right)\leqslant C\left(1+\frac{1}{\delta_x} + \frac{1}{\delta^4_x} + \frac{(1+\delta_x)^3}{\delta^4_x}\right)d_{CC}\left(x,\widetilde{x}\right).
\end{align*}
Using the coupling from Theorem $\ref{thm:3}$ and It\^{o}'s formula we
have that
\begin{align*}
\left|u\left(x\right)-u\left(\widetilde{x}\right)\right| & = \left|\mathbb{E}\left[u\left(\mathbf{X}_{\tau_{B(x, \delta_x)}\left(\mathbf{X}\right)}\right)-u\left(\widetilde{\mathbf{X}}_{\widetilde{\tau}_{B(x, \delta_x)}\left(\widetilde{\mathbf{X}}\right)}\right)\right]\right|\\
 & \leqslant  \mathbb{E}\left[\left|u\left(\mathbf{X}_{\tau_{B(x, \delta_x)}\left(\mathbf{X}\right)}\right)-u\left(\widetilde{\mathbf{X}}_{\widetilde{\tau}_{B(x, \delta_x)}\left(\widetilde{\mathbf{X}}\right)}\right)\right|\right]\\
 & \leqslant  \left(\operatorname{osc}_{B(x,\delta_x)}u\right) \cdot\mathbb{P}\left(\tau>\tau_{B(x, \delta_x)}\left(\mathbf{X}\right)\wedge\widetilde{\tau}_{B(x, \delta_x)}\left(\widetilde{\mathbf{X}}\right)\right)\\
 & \leqslant  C\left(\operatorname{osc}_{B(x,\delta_x)}u\right)\left(1+\frac{1}{\delta_x} + \frac{1}{\delta^4_x} + \frac{(1+\delta_x)^3}{\delta^4_x}\right)d_{CC}\left(x,\widetilde{x}\right).
\end{align*}
Since $u\in C\left(\overline{D}\right)\cap C^2\left(D\right)$ therefore (\ref{gradineq1})  holds for every $x\in D$. Dividing out by $d_{CC}\left(x,\widetilde{x}\right)$ and using (\ref{gradineq1}) we have that for every $x\in D$,
\begin{multline*}
\hspace{1cm} \left\Vert \nabla_{\mathcal{H}}u(x)\right\Vert _{\mathcal{H}} \leqslant \left|\nabla_{\mathcal{H}}u\right|(x)=\lim_{r\downarrow0}\sup_{0<d_{CC}\left(x,\tilde{x}\right)\leq r}\frac{\left|u\left(x\right)-u\left(\widetilde{x}\right)\right|}{d_{CC}\left(x,\widetilde{x}\right)}\\
 \leqslant C\left(1+\frac{1}{\delta_x} + \frac{1}{\delta^4_x} + \frac{(1+\delta_x)^3}{\delta^4_x}\right)\operatorname{osc}_{B(x,\delta_x)}u, \hspace{1cm}
\end{multline*}
as needed.
\end{proof}

\begin{corollary}\label{Cor4.5}
Let $u\in C\left(\overline{D}\right)\cap C^{\infty}\left(D\right)$ be a non-negative solution to
$\Delta_{\mathcal{H}}u=0$ on $D\subset\mathbb{H}^{3}$. There exists
a constant $C>0$ that does not depend on $u,\delta_{x},x,D$ such
that
\[
\left\Vert \nabla_{\mathcal{H}}u\left(x\right)\right\Vert _{\mathcal{H}}\leqslant \left|\nabla_{\mathcal{H}}u\right|(x)\leqslant C\left(1+\frac{1}{\delta_{x}}+\frac{1}{\delta_{x}^{4}}+\frac{\left(1+\delta_{x}\right)^{3}}{\delta_{x}^{4}}\right)u(x)
\]for every $x\in D$.
%The constant $C$ depends only on the choice of sub-Laplacian $\Delta_{\mathcal{H}}$
%and the pseudo-metric $\rho$ on $\mathbb{H}^{3}$.
\end{corollary}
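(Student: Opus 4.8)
The plan is to deduce the corollary from Theorem~\ref{t.GradientEst} by replacing the oscillation $\operatorname{osc}_{B(x,\alpha\delta_x)}u$ with $u(x)$, which is exactly what the scale-invariant Harnack inequality for non-negative $\Delta_{\mathcal{H}}$-harmonic functions allows. Recall that $\mathbb{H}^3$ is a Carnot group, so its sub-Laplacian satisfies such an inequality: there are constants $\theta\in(0,1)$ and $C_H\geqslant1$, depending only on the group structure, such that any non-negative solution $v$ of $\Delta_{\mathcal{H}}v=0$ on a ball $B(y,r)$ satisfies $\sup_{B(y,\theta r)}v\leqslant C_H\inf_{B(y,\theta r)}v$; see \cite{BonfiglioliLanconelliUguzzoniBook} (and, if one prefers to state it with the pseudo-metric $\rho$ rather than $d_{CC}$, one adjusts $\theta$ and $C_H$ using the equivalence of the two metrics recalled in Section~\ref{section:prelim}).

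First I would fix $x\in D$ and note that $B(x,\delta_x)\subseteq D$: if $\rho(x,y)<\delta_x=\rho(x,D^c)$ then $y\notin D^c$, hence $y\in D$. Thus $u$ restricted to $B(x,\delta_x)$ is a non-negative solution of $\Delta_{\mathcal{H}}u=0$. Applying Theorem~\ref{t.GradientEst} with the choice $\alpha=\theta$ gives
\[
\left\Vert\nabla_{\mathcal{H}}u(x)\right\Vert_{\mathcal{H}}\leqslant\left|\nabla_{\mathcal{H}}u\right|(x)\leqslant C\left(1+\frac{1}{\delta_x}+\frac{1}{\delta_x^4}+\frac{(1+\delta_x)^3}{\delta_x^4}\right)\operatorname{osc}_{B(x,\theta\delta_x)}u.
\]
Since $u\geqslant0$ on $D$, we have $\operatorname{osc}_{B(x,\theta\delta_x)}u=\sup_{B(x,\theta\delta_x)}u-\inf_{B(x,\theta\delta_x)}u\leqslant\sup_{B(x,\theta\delta_x)}u$, and by the Harnack inequality applied on $B(x,\delta_x)$ together with the fact that $x\in B(x,\theta\delta_x)$,
\[
\sup_{B(x,\theta\delta_x)}u\leqslant C_H\inf_{B(x,\theta\delta_x)}u\leqslant C_H\,u(x).
\]
Substituting this into the previous display yields the asserted estimate with $CC_H$ in place of $C$.

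There is no substantial obstacle beyond Theorem~\ref{t.GradientEst}; the one point that must be handled with care is to invoke a genuinely \emph{scale-invariant} Harnack inequality, so that $C_H$, and hence the final constant, does not depend on $\delta_x$, $x$, or $D$. This is precisely where the dilation structure of the Carnot group $\mathbb{H}^3$ enters, and it is the reason the estimate can be stated uniformly over all domains $D$ and all points $x\in D$.
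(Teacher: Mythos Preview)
Your proposal is correct and is essentially the same argument as the paper's: apply Theorem~\ref{t.GradientEst} with the parameter $\alpha$ taken to be the Harnack constant $\theta$ (the paper calls it $\alpha^*$), then use the scale-invariant Harnack inequality from \cite{BonfiglioliLanconelliUguzzoniBook} to bound $\operatorname{osc}_{B(x,\theta\delta_x)}u\leqslant\sup_{B(x,\theta\delta_x)}u\leqslant C_H\,u(x)$. Your explicit remark that scale-invariance is what makes the constant independent of $\delta_x,x,D$ is a useful clarification.
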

\begin{proof}
By  \cite[Corollary 5.7.3]{BonfiglioliLanconelliUguzzoniBook} we have the following Harnack inequality
\begin{equation}
\sup_{B(x, \alpha^*\delta_{x})}u\leqslant C\inf_{B\left(x, \alpha^*\delta_{x}\right)}u \label{Harnack}
\end{equation}
for $x\in D\subset\mathbb{H}^{3}$, where $\alpha^* \in (0,1]$, $C>0$ are constants
% which depend only on the choice of sub-Laplacian $\Delta_{\mathcal{H}}$
%and the pseudo-metric $\rho$ but
not depending on $u,\delta_{x}, x, D$. Then Equations \eqref{gradineq2} and \eqref{Harnack} give the desired result.
\end{proof}

We can use Corollary \ref{Cor4.5} and the stratified structure of $\mathbb{H}^3$ to prove the Cheng-Yau gradient estimate. In particular, this recovers the fact that non-negative harmonic functions on the Heisenberg group must be constant.
%We point out that a combination of \cite[Proposition 2.5]{BaudoinBonnefont2016} and
%\cite[Theorem 1.2, Lemmma 2.3]{CoulhonJiangKoskelaSikora2017}
%can be used to obtain the Cheng-Yau inequality analytically for any Carnot group.
We thank F. Baudoin for pointing out the connection between the gradient estimate in Corollary \ref{Cor4.5} and the Cheng-Yau inequality.

\begin{corollary}\label{c.4.6}
If $u$ is any positive harmonic function in a ball $B\left(x_{0}, 2r\right)\subset\mathbb{H}^{3}$,
then there exists a universal constant $C>0$ not dependent on $u$ and $x_0$
such that
\[
\sup_{B(x_{0},r),}\left\Vert \nabla_{\mathcal{H}}\log u(x)\right\Vert _{\mathcal{H}}\leq\frac{C}{r}.
\]
Moreover, if $u$ is any positive harmonic function on $\mathbb{H}^{3}$, then $u$ must be a constant.
\end{corollary}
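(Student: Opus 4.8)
The plan is to convert the scale-dependent, location-local bound of Corollary~\ref{Cor4.5} into the scale-invariant estimate claimed here, using the two symmetries of $\mathbb{H}^3$. Left translations $L_g(\mathbf{x})=g\star\mathbf{x}$ are $\rho$-isometries and, because $\mathcal{X},\mathcal{Y}$ are left-invariant, preserve $\Delta_{\mathcal{H}}$ and satisfy $\|\nabla_{\mathcal{H}}(f\circ L_g)\|_{\mathcal{H}}=(\|\nabla_{\mathcal{H}}f\|_{\mathcal{H}})\circ L_g$. The parabolic dilations $\Phi_\lambda(x,y,z)=(\lambda x,\lambda y,\lambda^2 z)$ are automorphisms of $\mathbb{H}^3$ with $\rho(\Phi_\lambda\mathbf{x},\Phi_\lambda\mathbf{y})=\lambda\,\rho(\mathbf{x},\mathbf{y})$, and a one-line computation from $\mathcal{X}=\partial_x-y\partial_z$, $\mathcal{Y}=\partial_y+x\partial_z$ gives $\mathcal{X}(f\circ\Phi_\lambda)=\lambda\,(\mathcal{X}f)\circ\Phi_\lambda$ and $\mathcal{Y}(f\circ\Phi_\lambda)=\lambda\,(\mathcal{Y}f)\circ\Phi_\lambda$, hence $\Delta_{\mathcal{H}}(f\circ\Phi_\lambda)=\lambda^2(\Delta_{\mathcal{H}}f)\circ\Phi_\lambda$ and $\|\nabla_{\mathcal{H}}(f\circ\Phi_\lambda)\|_{\mathcal{H}}=\lambda\,(\|\nabla_{\mathcal{H}}f\|_{\mathcal{H}})\circ\Phi_\lambda$. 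A direct application of Corollary~\ref{Cor4.5} on $B(x_0,2r)$ itself does not suffice, since the prefactor there depends on $\delta_x$, not on $r$, and therefore does not scale like $1/r$; rescaling to a fixed radius is what repairs this.

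Concretely, I would argue pointwise. Fix $y_0\in B(x_0,r)$. Since $\rho$ is controlled by a triangle-type inequality (being equivalent to the genuine metric $d_{CC}$), $u$ is a positive solution of $\Delta_{\mathcal{H}}u=0$ on a ball $B(y_0,\beta r)$ for a fixed $\beta\in(0,1]$ depending only on absolute constants; hence $\tilde u:=u\circ L_{y_0}$ is positive and harmonic on $B(e,\beta r)$, and $v:=\tilde u\circ\Phi_{\beta r}$ is positive and harmonic on $B(e,1)$. Apply Corollary~\ref{Cor4.5} to $v$ on the domain $D=B(e,1/2)$: since $v$ is smooth on $B(e,1)\supset\overline{B(e,1/2)}$ the hypotheses hold, and here $\delta_e=\rho(e,D^c)=1/2$ is a fixed positive number, so the prefactor $1+\delta_e^{-1}+\delta_e^{-4}+(1+\delta_e)^3\delta_e^{-4}$ equals a universal constant. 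Thus $\|\nabla_{\mathcal{H}}\log v\|_{\mathcal{H}}(e)=\|\nabla_{\mathcal{H}}v\|_{\mathcal{H}}(e)/v(e)\leq |\nabla_{\mathcal{H}}v|(e)/v(e)\leq C_1$ for a universal $C_1>0$. Unwinding the transformations, $\|\nabla_{\mathcal{H}}\log v\|_{\mathcal{H}}(e)=\beta r\,\|\nabla_{\mathcal{H}}\log\tilde u\|_{\mathcal{H}}(e)=\beta r\,\|\nabla_{\mathcal{H}}\log u\|_{\mathcal{H}}(y_0)$, so $\|\nabla_{\mathcal{H}}\log u\|_{\mathcal{H}}(y_0)\leq C_1/(\beta r)$. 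Since $y_0\in B(x_0,r)$ was arbitrary and $C_1,\beta$ depend on nothing but absolute constants, this is the first assertion.

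For the global statement, suppose $u>0$ and $\Delta_{\mathcal{H}}u=0$ on all of $\mathbb{H}^3$. Fixing $x$ and applying the estimate just proved with centre $x$ and radius $r$ gives $\|\nabla_{\mathcal{H}}\log u\|_{\mathcal{H}}(x)\leq C_1/(\beta r)$ for every $r>0$; letting $r\to\infty$ yields $\nabla_{\mathcal{H}}\log u\equiv 0$, i.e. $\mathcal{X}u\equiv\mathcal{Y}u\equiv 0$ on $\mathbb{H}^3$. Because $\{\mathcal{X},\mathcal{Y}\}$ is bracket-generating, $u$ must then be constant, exactly as in the proof of the corollary following Proposition~\ref{prop:ccc} (via \cite[Proposition~1.5.6]{BonfiglioliLanconelliUguzzoniBook}).

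The only delicate points are of bookkeeping rather than substance: verifying the homogeneity identities under $\Phi_\lambda$, tracking the constant $\beta$ coming from the equivalence of $\rho$ with $d_{CC}$ in the steps $B(y_0,\beta r)\subset B(x_0,2r)$ and $\overline{B(e,1/2)}\subset B(e,1)$, and confirming that the smoothness and continuity hypotheses of Corollary~\ref{Cor4.5} survive the rescaling. I do not anticipate a genuine obstacle: the whole mechanism is that a parabolic dilation normalizes the radius of harmonicity to a fixed value, thereby turning the $\delta_x$-dependent constant of Corollary~\ref{Cor4.5} into the scale-invariant bound $C/r$.
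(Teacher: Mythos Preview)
Your proposal is correct and follows essentially the same approach as the paper: reduce, via left translation and the parabolic dilations $\Phi_\lambda$, to a fixed unit-scale ball where the $\delta_x$-dependent prefactor of Corollary~\ref{Cor4.5} becomes a universal constant, then unwind to obtain the $C/r$ bound, and let $r\to\infty$ for the Liouville statement. The only organizational difference is that the paper first proves the estimate on $B(0,2)$ uniformly over $B(0,1)$ and then invokes translation and dilation in one line, whereas you translate and dilate pointwise for each $y_0\in B(x_0,r)$ and apply Corollary~\ref{Cor4.5} only at the identity; this is a matter of presentation, not of method.
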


\begin{proof}

Suppose $u>0$ is harmonic in $B\left(0,2\right)$. By Corollary \ref{Cor4.5}

\begin{equation}
\frac{\left\Vert \nabla_{\mathcal{H}}u(x)\right\Vert _{\mathcal{H}}}{u(x)}\leq C^{\prime}=C\sup_{x\in B\left(0,1\right)}\left(1+\frac{1}{\delta_{x}}+\frac{1}{\delta_{x}}+\frac{\left(1+\delta_{x}\right)^{3}}{\delta_{x}^{4}}\right),\,\,\,\,x\in B(0,1),\label{eq:Cheng-Yau-1}
\end{equation}
where $C$ is the same constant as in Corollary \ref{Cor4.5}. This implies that
\begin{equation}
\sup_{B(0,1),}\left\Vert \nabla_{\mathcal{H}}\log u\right\Vert _{\mathcal{H}}\leqslant C^{\prime}.\label{eq:Cheng-Yau-2}
\end{equation}

Now suppose that $u>0$ is harmonic in $B\left(x_{0}, 2r\right)$
for $r>0$. By left invariance and the dilation properties of $\mathbb{H}^3$  we see that \eqref{eq:Cheng-Yau-2} implies
\[
\sup_{B(x_{0},r),}\left\Vert \nabla_{\mathcal{H}}\log u\right\Vert _{\mathcal{H}}\leqslant \frac{C^{\prime}}{r}.
\]
If $u$ is harmonic on all of $\mathbb{H}^{3}$, taking $r\to\infty$ gives us that $u$ must be constant.
\end{proof}

\section{Concluding remarks}
Our work gives the first use of explicit non-Markovian coupling techniques to get geometric information in the sub-Riemannian setting. We would like to point out some potentially significant connections with a different approach to such a setting. K.~Kuwada in \cite{Kuwada2010a} proved an important result on the duality of $L^q$-gradient estimates for the heat kernel of diffusions and their $L^p$-Wasserstein distances under the assumptions of volume doubling and a local Poincar\'{e} inequality, for any $p \in [1, \infty]$, $\frac{1}{p} + \frac{1}{q}=1$. Using this duality, he used the $L^1$-gradient estimate of the heat kernel for Brownian motion on the Heisenberg group obtained in \cite{LiHong-Quan2006} and \cite{BakryBaudoinBonnefontChafai2008} to derive $L^{\infty}$-Wasserstein bounds. More precisely, he proved that if $d_W(x,y;t)$ denotes the $L^{\infty}$-Wasserstein distance between the laws of Brownian motion on $\mathbb{H}^3$ starting from $x$ and $y$ at time $t>0$, then
\begin{equation}\label{equation:kuwada}
d_W(x,y;t) \leqslant K d_{CC}(x,y)
\end{equation}
for some constant $K$ that does not depend on $x,y,t$. The constant $K$ is not known, the best estimate obtained so far is $K \geqslant \sqrt{2}$ (see \cite{DriverMelcher2005}). Although we work with the total variation distance instead of the Wasserstein distance, Theorem \ref{thm:TVD} gives a better estimate of the distance between the laws of the two Brownian motions on $\mathbb{H}^3$, as it not only captures the dependence on the starting points, but also gives the ``\textit{polynomial decay}" in time.

%However, as described in of \cite[Example 4.3]{Kuwada2010a}, the heat kernel gradient bounds in %\cite{LiHong-Quan2006, BakryBaudoinBonnefontChafai2008}, via Kuwada's duality described above, %imply that \textit{for any $t>0$}, there exists a coupling $(\mathbf{X}, \widetilde{\mathbf{X}})$ %of two Brownian motions on $\mathbb{H}^3$ starting from $x$ and $y$ respectively such that
%\[
%d_{CC}(\mathbf{X}_t, \widetilde{\mathbf{X}}_t) \leqslant  K d_{CC}(x,y)
%\]
%where $K$ is the same constant as in \eqref{equation:kuwada}. The remarkable power of Kuwada's %duality is that if we could construct the above coupling directly, it would give a fairly %elementary proof of the gradient estimates in \cite{LiHong-Quan2006} and %\cite{BakryBaudoinBonnefontChafai2008} (which were obtained using heavy analytic tools) via %\textit{purely probabilistic methods}. However, this coupling has been so far out of reach as, %quoting Kuwada in \cite{Kuwada2010a}, ``it is sometimes a complicated issue to construct %well-behaved couplings in the absence of curvature bounds". One important reason why the coupling %has been hard to construct so far is that mostly Markovian couplings were sought after (see %\cite{Kendall2007a}) as non-Markovian couplings were typically considered implicit.

Our intention is to use the techniques developed in this article and in \cite{BanerjeeKendall2016a}, to give a systematic way to explicitly construct non-Markovian couplings via spectral expansions, and connect it to the previous results on the heat kernels such as those in \cite{LiHong-Quan2006, DriverMelcher2005, Kuwada2010a}.
%Moreover, as these couplings are typically explicit, one can produce better estimates for the %constant $K$.
This will be addressed in future work.
\begin{acknowledgement}
S.B. is grateful for many helpful and motivating conversations with W. S. Kendall.
The authors also thank Fabrice Baudoin for drawing our attention to Kuwada's results and  the Cheng-Yau estimate in Corollary \ref{c.4.6}, and Iddo Ben-Ari for his interest in the subject and his insight into coupling techniques. We also thank the anonymous referee whose careful review and suggestions greatly improved the presentation of this paper.
\end{acknowledgement}

\bibliographystyle{plain}	% (uses file "plain.bst")

\end{document}